\numberwithin{equation}{section}
\newtheorem{deff}{Definition}[section]
\newtheorem{lemma}[deff]{Lemma}
\newtheorem{theorem}[deff]{Theorem}
\newtheorem{corollary}[deff]{Corollary}
\newtheorem{proposition}[deff]{Proposition}
\newtheorem{fact}[deff]{Fact}
\newtheorem{em-example}[deff]{Example}
\newtheorem{em-def}[deff]{Definition}        
\newtheorem{em-remark}[deff]{Remark}         
\newtheorem{em-question}[deff]{Question}
\newenvironment{example}{\begin{em-example} \em }{ \end{em-example}}
\newenvironment{remark}{\begin{em-remark} \em }{\end{em-remark}}
\newcommand{\R}{\mathbb R}
\newcommand{\N}{\mathbb N}
\newcommand{\Q}{\mathbb Q}
\newcommand{\Z}{\mathbb Z}
\newcommand{\Prm}{\mathbb P}
\newcommand{\T}{\mathbb{T}}
\newcommand{\f}{\phi}
\newcommand{\CC}{\mathcal C}
\def\ent{\mathrm{ent}}
\def\aent{\mathrm{ent}^\star}
\def\Hom{\mathrm{Hom}}
\def\Ab{\mathbf{Ab}}
\def\TopAb{\mathbf{TopAb}}
\def\End{\mathrm{End}}
\global\def\hull#1{\langle{#1}\rangle}
\global\def\card#1{\left|{#1}\right|}
\global\def\Pdual#1{\widehat{#1}}
\global\def\prdual#1{{#1}^\dag}
\global\def\dual#1{{#1}'}
\title{Adjoint entropy vs Topological entropy}
\date{Dedicated to the sixtieth birthday of Dikran Dikranjan}
\author{Anna Giordano Bruno
\\{\footnotesize {\tt  anna.giordanobruno@uniud.it}} 
\\{\footnotesize Dipartimento di Matematica e Informatica,}
\\{\footnotesize Universit\`a di Udine,}
\\{\footnotesize Via delle Scienze, 206 - 33100 Udine}
}
\begin{document}

\maketitle
%
%



\abstract{
Recently the adjoint algebraic entropy of endomorphisms of abelian groups was introduced and studied in \cite{DGS}. We generalize the notion of adjoint entropy to continuous endomorphisms of topological abelian groups. Indeed, the adjoint algebraic entropy is defined using the family of all finite-index subgroups, while we take only the subfamily of all \emph{open} finite-index subgroups to define the topological adjoint entropy. This allows us to compare the (topological) adjoint entropy with the known topological entropy of continuous endomorphisms of compact abelian groups. In particular, the topological adjoint entropy and the topological entropy coincide on continuous endomorphisms of totally disconnected compact abelian groups. Moreover, we prove two Bridge Theorems between the topological adjoint entropy and the algebraic entropy using respectively the Pontryagin duality and the precompact duality.
}




\section{Introduction}

In the same paper \cite{AKM} where they introduced and studied the topological entropy $h_{top}$ for continuous maps of compact spaces (see Section \ref{aent-top-sec} for the precise definition), Adler, Konheim and McAndrew defined the algebraic entropy of endomorphisms of abelian groups. The notion of algebraic entropy was studied later by Weiss \cite{W}, who gave its basic properties and its relation with the topological entropy.

\smallskip
The algebraic entropy of an endomorphism $\phi$ of an abelian group $G$ measures to what extent $\phi$ moves the finite subgroups $F$ of $G$. More precisely, according to  \cite{AKM,W}, for a positive integer $n$, let $$T_n(\phi,F)=F+\phi(F)+\ldots+\phi^{n-1}(F)$$ be the \emph{$n$-th $\phi$-trajectory} of $F$. The \emph{algebraic entropy of $\phi$ with respect to $F$} is $$H(\phi,F)={\lim_{n\to \infty}\frac{\log|T_n(\phi,F)|}{n}},$$ and the \emph{algebraic entropy} of $\phi:G\to G$ is $$\ent(\phi)=\sup\{H(\phi,F): F\ \text{is a finite subgroup of } G\}.$$
Clearly $\ent(\phi)=\ent(\phi\restriction_{t(G)})$, where $t(G)$ denotes the torsion part of $G$; so the natural context for the algebraic entropy is that of endomorphisms of torsion abelian groups. Many papers in the last years were devoted to the study of various aspects of the algebraic entropy, and mainly \cite{DGSZ}, where its fundamental properties were proved.

\medskip
In analogy to the algebraic entropy, in \cite{DGS} the adjoint algebraic entropy of endomorphisms $\phi$ of abelian groups $G$ was introduced ``replacing" the family of all finite subgroups of $G$ with the family $\CC(G)$ of all finite-index subgroups of $G$. Indeed, for $N\in\CC(G)$, and a positive integer $n$, let $$B_n(\phi,N)=N\cap\phi^{-1}(N)\cap\ldots\cap\phi^{-n+1}(N);$$ the \emph{$n$-th $\phi$-cotrajectory} of $N$ is $$C_n(\phi,N)=\frac{G}{B_n(\phi,N)}.$$
Each $C_n(\phi,N)$ is finite, as each $B_n(\phi,N)\in\CC(G)$, because the family $\CC(G)$ is stable under inverse images with respect to $\phi$ and under finite intersections. The \emph{adjoint algebraic entropy of $\phi$ with respect to $N$} is 
\begin{equation}\label{H*}
H^\star(\phi,N)={\lim_{n\to \infty}\frac{\log|C_n(\phi,N)|}{n}}.
\end{equation}
This limit exists and it is finite;
in fact, as shown in {\cite[Proposition 2.3]{DGS}}, 
\begin{equation}\label{C=C_n->ent*=0}
\text{the sequence $\alpha_n=\card{\frac{C_{n+1}(\phi,N)}{C_n(\phi,N)}}=\card{\frac{B_n(\phi,N)}{B_{n+1}(\phi,N)}}$ is stationary;}
\end{equation}
more precisely, there exists a natural number $\alpha$ such that $\alpha_n= \alpha$ for all $n$ large enough. 
The \emph{adjoint algebraic entropy} of $\phi:G\to G$ is $$\aent(\phi)=\sup\{H^\star(\phi,N): N\in\CC(G)\}.$$  

\smallskip
While the algebraic entropy has values in $\log\N_+\cup\{\infty\}$, the adjoint algebraic entropy takes values only in $\{0,\infty\}$ as shown by \cite[Theorem 7.6]{DGS}.
This particular ``binary behavior'' of the values of the adjoint algebraic entropy seems to be caused by the fact that the family of finite-index subgroups can be very large. So in this paper we take only a part of it using the topology in the following way.
For a topological abelian group $(G,\tau)$, consider the subfamily $$\CC_\tau(G)=\{N\in\CC(G):N\ \text{$\tau$-open}\}$$ of $\CC(G)$ consisting of all $\tau$-open finite-index subgroups of $G$. 

\begin{deff}
For a topological abelian group $(G,\tau)$ and a continuous endomorphism $\phi:(G,\tau)\to (G,\tau)$, the 
\emph{topological adjoint entropy} of $\phi$ with respect to $\tau$ is
\begin{equation}\label{5}
\ent_{\tau}^\star(\phi)=\sup \{ H^\star(\phi,N):N\in\CC_\tau(G)\}.
\end{equation}
\end{deff}

Roughly speaking, $\aent_{\tau}$ is a variant of $\aent$ but taken only with respect to \emph{some} finite-index subgroups, namely, the $\tau$-open ones. Clearly, for $(G,\tau)$ a topological abelian group and $\phi:(G,\tau)\to(G,\tau)$ a continuous endomorphism, $\aent(\phi)\geq\aent_\tau(\phi)$.
For the discrete topology $\delta_G$ of $G$, we have $\aent_{\delta_G}(\phi)=\aent(\phi)$, so the notion of topological adjoint entropy extends that of adjoint algebraic entropy, and this provides a first motivation for this paper.

\medskip
In Section \ref{aent-sec} we study the basic properties of the topological adjoint entropy, with respect to the typical properties of the known entropies. 

Note that we defined the topological adjoint entropy for a \emph{continuous} endomorphism $\phi:(G,\tau)\to (G,\tau)$ of a topological abelian group $(G,\tau)$. From a categorical point of view this seems to be the right setting, also as $\CC_\tau(G)$ is stable under inverse images with respect to $\phi$ just because $\phi$ is continuous. On the other hand, the definition formally makes sense for not necessarily continuous endomorphisms, and furthermore the basic properties proved in Section \ref{aent-sec} hold true even removing the continuity of the endomorphisms. So it may be desirable to study also this more general situation.

\medskip
Recall that a topological abelian group $(G,\tau)$ is \emph{totally bounded} if for every open neighborhood $U$ of $0$ in $(G,\tau)$ there exists a finite subset $F$ of $G$ such that $U+F=G$. Moreover, $(G,\tau)$ is \emph{precompact} if it is Hausdorff and totally bounded. Since the completion $\widetilde{(G,\tau)}$ of a precompact abelian group $(G,\tau)$ is compact, the precompact abelian groups are precisely the subgroups of compact abelian groups.
Moreover, the group topology $\tau$ is said to be \emph{linear} if it has a base of the neighborhoods of $0$ consisting of open subgroups.

In Section \ref{ft-sec} we consider other properties of the topological adjoint entropy, giving topological and algebraic reductions for its computation. In particular, we see that in order to compute the topological adjoint entropy of continuous endomorphisms of topological abelian groups, it suffices to consider precompact linear topologies (see Remark \ref{wlog}). 

\medskip
In Section \ref{aent-top-sec}, which contains the main results of this paper, we study the connections of the topological adjoint entropy with the topological entropy and the algebraic entropy.

\smallskip
Note that the adjoint algebraic entropy is closely related to the algebraic entropy through the Pontryagin duality (see Section \ref{Pontryagin} for basic definitions and properties of the Pontryagin duality, see also \cite{HR,P}). Indeed, \cite[Theorem 5.3]{DGS}, which is a so-called Bridge Theorem, shows that: 

\begin{theorem}[Bridge Theorem $\aent$-$\ent$]\label{BT-DGS}
{The adjoint entropy of an endomorphism $\phi$ of an abelian group $G$ is the same as the algebraic entropy of the dual endomorphism $\Pdual\phi$ of $\phi$ of the (compact) Pontryagin dual $\Pdual G$ of $G$.}
\end{theorem}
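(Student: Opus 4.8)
The plan is to establish a perfect correspondence between the finite subgroups of $\Pdual G$ relevant to $\ent(\Pdual\phi)$ and the finite-index subgroups of $G$ relevant to $\aent(\phi)$, via the annihilator correspondence of Pontryagin duality. First I would recall that for a (discrete) abelian group $G$ the Pontryagin dual $\Pdual G$ is compact, and that Pontryagin duality yields an order-reversing bijection between the lattice of subgroups of $G$ and the lattice of closed subgroups of $\Pdual G$, given by $H\mapsto H^\perp=\{\chi\in\Pdual G:\chi(H)=0\}$. Under this correspondence the finite-index subgroups $N\in\CC(G)$ correspond exactly to the finite (closed) subgroups $N^\perp$ of $\Pdual G$, and moreover $\card{N^\perp}=[G:N]=\card{C_1(\phi,N)}$ by the standard duality $\Pdual{(G/N)}\cong N^\perp$. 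So the family over which the supremum defining $\aent(\phi)$ is taken is in natural bijection with the family of all finite subgroups of $\Pdual G$ over which the supremum defining $\ent(\Pdual\phi)$ is taken.

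Next I would match the dynamics. The key algebraic identity is that duality converts the inverse-image operation for $\phi$ into the direct-image operation for $\Pdual\phi$: concretely, $\phi^{-1}(N)^\perp=\Pdual\phi(N^\perp)$, or equivalently $(\Pdual\phi)^{k}(N^\perp)=\bigl(\phi^{-k}(N)\bigr)^\perp$ for each $k$. Applying the annihilator to the definition $B_n(\phi,N)=N\cap\phi^{-1}(N)\cap\ldots\cap\phi^{-n+1}(N)$, and using that annihilators turn intersections of subgroups into (the closure of) sums of the corresponding annihilators, I would obtain
\begin{equation*}
B_n(\phi,N)^\perp=N^\perp+\Pdual\phi(N^\perp)+\ldots+\Pdual\phi^{\,n-1}(N^\perp)=T_n(\Pdual\phi,N^\perp),
\end{equation*}
which is precisely the $n$-th $\Pdual\phi$-trajectory of the finite subgroup $F:=N^\perp$. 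Dualizing the finite quotient $C_n(\phi,N)=G/B_n(\phi,N)$ then gives $\Pdual{C_n(\phi,N)}\cong B_n(\phi,N)^\perp=T_n(\Pdual\phi,F)$, and since a finite abelian group and its dual have equal cardinality, $\card{C_n(\phi,N)}=\card{T_n(\Pdual\phi,F)}$ for every $n$.

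Having matched cardinalities term by term, the conclusion is immediate: dividing by $n$, taking $\log$ and the limit gives $H^\star(\phi,N)=H(\Pdual\phi,F)$ for each corresponding pair $(N,F=N^\perp)$, and taking suprema over the two families — which the first step showed to be in bijection — yields
\begin{equation*}
\aent(\phi)=\sup_{N\in\CC(G)}H^\star(\phi,N)=\sup_{F\le\Pdual G\text{ finite}}H(\Pdual\phi,F)=\ent(\Pdual\phi).
\end{equation*}
I expect the main obstacle to be the careful verification of the two duality identities in the correct category: one must check that $\phi^{-1}(N)^\perp=\Pdual\phi(N^\perp)$ holds with genuine equality (not just containment) for finite-index $N$, which relies on $N$ being open/finite-index so that $N^\perp$ is finite and all subgroups in sight are closed, and one must confirm that the annihilator of a finite intersection is exactly the (automatically closed, being finite) sum of annihilators. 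These are standard consequences of the duality between $\Ab$ and the category of compact abelian groups, but they are the technical heart of the argument; once they are in place the entropy computation is purely formal.
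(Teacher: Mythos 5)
Your proof is correct and takes essentially the same approach as the paper's: the annihilator correspondence $N\mapsto N^\perp$ between finite-index subgroups of $G$ and finite subgroups of $\Pdual G$, the identity $(\phi^{-k}(N))^\perp=(\Pdual\phi)^k(N^\perp)$, and the resulting term-by-term equality $\card{C_n(\phi,N)}=\card{T_n(\Pdual\phi,N^\perp)}$ are exactly the ingredients of \cite[Theorem 5.3]{DGS}, which is the source the paper cites for this statement. The same three duality facts appear in this paper as Fact \ref{prd} and are used verbatim in its proof of the precompact analogue, Theorem \ref{ent-adj-top}, so your argument matches the paper's technique step for step.
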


Moreover, Weiss proved the Bridge Theorem \cite[Theorem 2.1]{W} relating the topological entropy and the algebraic entropy through the Pontryagin duality: 

\begin{theorem}[Bridge Theorem $h_{top}$-$\ent$]\label{BT-W}
{The topological entropy of a continuous endomorphism $\phi$ of a totally disconnected compact abelian group $(G,\tau)$ is the same as the algebraic entropy of the dual endomorphism $\Pdual\phi$ of $\phi$ of the (torsion and discrete) Pontryagin dual $\Pdual{(G,\tau)}$ of $(G,\tau)$.}
\end{theorem}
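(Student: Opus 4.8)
The plan is to run the standard Pontryagin-duality ``bridge'' argument: convert open covers of the compact group $(G,\tau)$ into finite subgroups of the discrete dual $\Pdual{(G,\tau)}$ by means of annihilators, and check that the two entropies compute the same limits term by term. Throughout write $\Pdual G=\Pdual{(G,\tau)}$, which is discrete and torsion precisely because $(G,\tau)$ is compact and totally disconnected, and write $A^\perp$ for the annihilator of a subgroup $A$. Since $G$ is compact, its open subgroups have finite index, and the annihilator furnishes an inclusion-reversing bijection between the open (finite-index) subgroups $N$ of $G$ and the finite subgroups $F$ of $\Pdual G$, with $F=N^\perp$, $N=F^\perp$, and $\card{G/N}=\card{N^\perp}$.

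First I would reduce the topological entropy to coset covers. By van Dantzig's theorem the compact totally disconnected group $G$ has a base of neighbourhoods of $0$ consisting of open finite-index subgroups; hence, using compactness, every open cover of $G$ is refined by the coset partition $\mathcal U_N=\{x+N:x\in G\}$ for some open subgroup $N$. Since finer covers have larger AKM entropy and the family $\{\mathcal U_N\}_N$ is directed by refinement, the coset covers are cofinal and one obtains $h_{top}(\phi)=\sup_N H_{top}(\phi,\mathcal U_N)$, the supremum ranging over open finite-index subgroups $N$. Next I would evaluate each term. Because $\phi$ is a continuous homomorphism, the $n$-fold join $\mathcal U_N\vee\phi^{-1}\mathcal U_N\vee\cdots\vee\phi^{-(n-1)}\mathcal U_N$ is exactly the coset partition of $G$ modulo $B_n(\phi,N)=N\cap\phi^{-1}(N)\cap\cdots\cap\phi^{-n+1}(N)$. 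A partition into disjoint nonempty clopen cosets admits no proper subcover, so its AKM number equals $\card{G/B_n(\phi,N)}=\card{C_n(\phi,N)}$, whence
$$H_{top}(\phi,\mathcal U_N)=\lim_{n\to\infty}\frac{\log\card{C_n(\phi,N)}}{n}=H^\star(\phi,N).$$

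Finally I would dualize. Using the annihilator identities $(\phi^{-k}(N))^\perp=\Pdual\phi^{\,k}(N^\perp)$ and $(\bigcap_i A_i)^\perp=\sum_i A_i^\perp$ (the inclusion $\supseteq$ in the first being immediate from $\psi\mapsto\psi\circ\phi$, and equality following by a cardinality count, while the sum in the second is automatically closed here since all subgroups involved are finite), one gets, with $F=N^\perp$,
$$B_n(\phi,N)^\perp=N^\perp+\Pdual\phi(N^\perp)+\cdots+\Pdual\phi^{\,n-1}(N^\perp)=T_n(\Pdual\phi,F).$$
Hence $\card{C_n(\phi,N)}=\card{G/B_n(\phi,N)}=\card{B_n(\phi,N)^\perp}=\card{T_n(\Pdual\phi,F)}$, so $H^\star(\phi,N)=H(\Pdual\phi,F)$. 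Since $N\mapsto N^\perp$ is a bijection between the open finite-index subgroups of $G$ and the finite subgroups of $\Pdual G$, taking suprema yields $h_{top}(\phi)=\sup_N H^\star(\phi,N)=\sup_F H(\Pdual\phi,F)=\ent(\Pdual\phi)$, as the algebraic entropy is by definition the supremum of $H(\Pdual\phi,F)$ over all finite subgroups $F$ of $\Pdual G$.

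I expect the main obstacle to be the first step: justifying that coset covers are cofinal and interact correctly with the AKM definition. This is exactly where total disconnectedness is essential, entering through van Dantzig's theorem, and one must be careful that refinement of covers behaves monotonically under the normalized limit defining $H_{top}$. The duality bookkeeping in the last step is routine once the annihilator formulas are in place, the only genuine point being that the finiteness of all the subgroups involved removes the usual closure subtleties in the identities $(\phi^{-k}(N))^\perp=\Pdual\phi^{\,k}(N^\perp)$ and $(\bigcap_i A_i)^\perp=\sum_i A_i^\perp$.
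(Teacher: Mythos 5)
Your proof is correct, and it is essentially the argument that the paper attributes to Weiss rather than a new route; note that the paper itself does not reprove Theorem \ref{BT-W} but cites it as \cite[Theorem 2.1]{W}, so the right comparison is with the machinery the paper builds around it. Your Steps 1--2 (cofinality of the coset covers $\zeta(N)$ among open covers of a totally disconnected compact group, and the identification $H_{top}(\phi,\zeta(N))=H^\star(\phi,N)$ via the join being the coset partition modulo $B_n(\phi,N)$) are exactly equation \eqref{w-eq} and Lemma \ref{H*=Htop} of the paper, which the author explicitly says are ``contained in the proof of Weiss' theorem''; together they give Theorem \ref{ent*=htop}, i.e.\ $h_{top}(\phi)=\aent_\tau(\phi)$ in the totally disconnected compact case. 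Your Step 3 (annihilator bookkeeping: $N\mapsto N^\bot$ bijecting $\CC_\tau(G)$ with the finite subgroups of $\Pdual G$, $(\phi^{-k}(N))^\bot=\Pdual\phi^{\,k}(N^\bot)$, $B_n(\phi,N)^\bot=T_n(\Pdual\phi,N^\bot)$, hence $H^\star(\phi,N)=H(\Pdual\phi,N^\bot)$) is precisely the proof of the paper's Theorem \ref{ent-adj-top}, carried out there for the precompact duality --- a harmless difference, since for compact $G$ the precompact and Pontryagin duals have the same underlying character group and $\ent$ is blind to the topology on the dual. What your write-up buys, and what the paper leaves implicit, is the observation that the paper's independently proved results (Lemma \ref{H*=Htop}, Theorem \ref{ent*=htop}, Theorem \ref{ent-adj-top}) already reassemble into a self-contained proof of Weiss' theorem, with no circularity: the paper uses Theorem \ref{BT-W} only later, in Corollary \ref{BT!}, which your argument does not touch. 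The two small points you should keep explicit are the ones you flagged: that refinement monotonicity of $H_{top}$ plus van Dantzig's theorem justifies restricting to coset covers (this is where total disconnectedness enters, and where the statement genuinely fails for, say, $\T$, cf.\ Example \ref{exx}), and that finiteness of $G/N$ and of $N^\bot$ removes all closure issues in the annihilator identities.
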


First of all we see that the topological adjoint entropy appeared in some non-explicit way in the proof of this classical theorem of Weiss.
More precisely, Theorem \ref{ent*=htop} shows that \emph{the topological adjoint entropy coincides with the topological entropy} for continuous endomorphisms of totally disconnected compact abelian groups (in particular, the topological adjoint entropy takes every value in $\log\N_+\cup\{\infty\}$, a more natural behavior with respect to that of the adjoint algebraic entropy).
Therefore, the topological adjoint entropy comes as an alternative form of topological entropy, with the advantage that it is defined for every continuous endomorphism of every abelian topological group, and not only in the compact case. So this provides a second motivation for introducing the adjoint topological entropy.

\smallskip
Here comes a third motivation for the topological adjoint entropy. Indeed, the following Bridge Theorem \ref{BT!-intro}, relating the topological adjoint entropy and the algebraic entropy through the Pontryagin duality, extends Weiss' Bridge Theorem \ref{BT-W} from totally disconnected compact abelian groups to arbitrary compact abelian groups. On the other hand, this is not possible using the topological entropy, so in some sense the adjoint topological entropy may be interpreted as the true ``dual entropy'' of the algebraic entropy. 

\begin{theorem}[Bridge Theorem $\aent_\tau$-$\ent$]\label{BT!-intro}
{The topological adjoint entropy of a continuous endomorphism $\phi$ of a compact abelian group $(G,\tau)$ is the same as the algebraic entropy of the dual endomorphism $\Pdual\phi$ of $\phi$ of the (discrete) Pontryagin dual $\Pdual{(G,\tau)}$ of $(G,\tau)$.}
\end{theorem}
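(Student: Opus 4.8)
The plan is to use Pontryagin duality to turn the $\phi$-cotrajectories that compute $\aent_\tau(\phi)$ into the $\Pdual\phi$-trajectories that compute $\ent(\Pdual\phi)$, following the scheme of the proof of the Bridge Theorem \ref{BT-DGS}, but now exploiting the fact that for a \emph{compact} group $(G,\tau)$ the family $\CC_\tau(G)$ of open finite-index subgroups is exactly the family that annihilator-duality pairs with the finite subgroups of the discrete dual $\Pdual G$. First I would record the annihilator calculus. For a closed subgroup $N\le G$ write $N^\perp=\{\chi\in\Pdual G:\chi(N)=0\}$ for its annihilator in $\Pdual G$; then $\Pdual{(G/N)}\cong N^\perp$ and $\Pdual N\cong\Pdual G/N^\perp$, and $N\mapsto N^\perp$ is an order-reversing bijection between the closed subgroups of $G$ and those of $\Pdual G$, with inverse $F\mapsto F^\perp$ taken in $G=\Pdual{(\Pdual G)}$. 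Since $G$ is compact, a subgroup is open if and only if it is closed of finite index, so $N\in\CC_\tau(G)$ iff $G/N$ is finite iff $N^\perp\cong\Pdual{(G/N)}$ is finite; hence $N\mapsto N^\perp$ restricts to a bijection between $\CC_\tau(G)$ and the finite subgroups of $\Pdual G$. This is the step that makes the statement hold for \emph{all} compact groups: the open finite-index subgroups are precisely the ones matching the finite subgroups that the algebraic entropy actually sees.

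Next I would dualize the cotrajectory. Using that annihilators send finite intersections to sums and that $(\phi^{-k}(N))^\perp=(\Pdual\phi)^{k}(N^\perp)$ for the continuous $\phi$, I compute
\begin{equation*}
B_n(\phi,N)^\perp=\Big(\bigcap_{k=0}^{n-1}\phi^{-k}(N)\Big)^\perp=\sum_{k=0}^{n-1}(\Pdual\phi)^{k}(N^\perp)=T_n(\Pdual\phi,N^\perp).
\end{equation*}
Writing $F=N^\perp$ and combining $\Pdual{(G/B_n(\phi,N))}\cong B_n(\phi,N)^\perp$ with the fact that a finite abelian group and its dual have equal order, I obtain, for every $n$,
\begin{equation*}
\card{C_n(\phi,N)}=\card{G/B_n(\phi,N)}=\card{B_n(\phi,N)^\perp}=\card{T_n(\Pdual\phi,F)},
\end{equation*}
so that $H^\star(\phi,N)=H(\Pdual\phi,F)$ directly from the defining limits \eqref{H*} and the definition of $H$.

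It then remains to take suprema. Because $N\mapsto N^\perp$ is a bijection of $\CC_\tau(G)$ onto the finite subgroups of $\Pdual G$, the supremum of $H^\star(\phi,N)$ over $N\in\CC_\tau(G)$ equals the supremum of $H(\Pdual\phi,F)$ over the finite subgroups $F$ of $\Pdual G$, which is exactly $\ent(\Pdual\phi)$; thus $\aent_\tau(\phi)=\ent(\Pdual\phi)$.

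I expect the only genuinely delicate point to be the annihilator identity $(\phi^{-k}(N))^\perp=(\Pdual\phi)^{k}(N^\perp)$: it concerns a \emph{preimage}, and for a general continuous endomorphism the duality theorem only yields the closure $\overline{(\Pdual\phi)^{k}(N^\perp)}$, and similarly the annihilator of an intersection is a priori only the \emph{closure} of the sum of annihilators. I would dispose of both closures by observing that once $N$ is open of finite index, every subgroup appearing in the computation is finite, hence already closed. This is precisely the place where restricting to $\CC_\tau(G)$ rather than to all of $\CC(G)$ is essential, and it explains why the clean Bridge Theorem survives for the topological adjoint entropy while the full adjoint algebraic entropy exhibits its binary behavior.
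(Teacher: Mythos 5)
Your proof is correct, but it follows a genuinely different route from the paper's own proof of this statement (Corollary \ref{BT!}). The paper derives the result from two previously established theorems: it first reduces modulo the connected component, obtaining $\aent_\tau(\phi)=\aent_{\tau_q}(\overline\phi)=h_{top}(\overline\phi)$ on the totally disconnected quotient (Corollary \ref{AT-conn} and Theorem \ref{ent*=htop}), then matches that quotient with the dual of the torsion subgroup $t(\Pdual G)$ under Pontryagin duality and invokes Weiss' Bridge Theorem \ref{BT-W} together with $\ent(\Pdual\phi)=\ent(\Pdual\phi\restriction_{t(\Pdual G)})$. You instead bypass topological entropy and Weiss' theorem altogether and argue directly with the annihilator calculus: the bijection $N\mapsto N^\perp$ between $\CC_\tau(G)$ and the finite subgroups of $\Pdual G$, the identity $B_n(\phi,N)^\perp=T_n(\Pdual\phi,N^\perp)$, and the resulting term-by-term equality $\card{C_n(\phi,N)}=\card{T_n(\Pdual\phi,N^\perp)}$. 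This is exactly the technique the paper uses for its \emph{other} Bridge Theorem, namely Theorem \ref{ent-adj-top} for precompact groups via Fact \ref{prd}; indeed, since a compact group is precompact and the precompact dual $\prdual{(G,\tau)}$ has the same underlying group as $\Pdual{(G,\tau)}$ while $\ent$ is blind to the topology on the dual, your statement is literally the compact special case of Theorem \ref{ent-adj-top} (this is also how Corollary \ref{cor} recovers it). What your route buys: it is self-contained, it avoids the reduction machinery, and it yields the finer local equality $H^\star(\phi,N)=H(\Pdual\phi,N^\perp)$ for every single $N\in\CC_\tau(G)$; your closing observation also correctly isolates why restricting to open finite-index subgroups kills the closure issues (every group in sight is finite --- and in the discrete dual every subgroup is closed anyway, so the point is doubly safe). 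What the paper's route buys: it exhibits the theorem as a direct extension of Weiss' Bridge Theorem from totally disconnected to arbitrary compact groups and displays the conceptual chain from $\aent_\tau$ through $h_{top}$ to $\ent$, which is the paper's stated motivation for introducing the topological adjoint entropy.
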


In analogy to the Bridge Theorem \ref{BT-DGS} between the adjoint algebraic entropy and the algebraic entropy, we give also another Bridge Theorem, making use of the precompact duality (see Section \ref{precompact} for the basic definitions and properties of the precompact duality, see also \cite{MO,RT}):

\begin{theorem}[Bridge Theorem $\aent_\tau$-$\ent$]\label{ent-adj-top-intro}
{The topological adjoint entropy of a continuous endomorphism $\phi$ of a precompact abelian group $(G,\tau)$ is the same as the algebraic entropy of the dual endomorphism $\prdual\phi$ of $\phi$ of the precompact dual $\prdual{(G,\tau)}$ of $(G,\tau)$.}
\end{theorem}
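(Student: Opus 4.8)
The plan is to reproduce, in the topological/precompact framework, the argument behind the Bridge Theorem \ref{BT-DGS} for the adjoint algebraic entropy, replacing the Pontryagin duality with the precompact duality and, accordingly, the family $\CC(G)$ of all finite-index subgroups with the subfamily $\CC_\tau(G)$ of the $\tau$-open ones. Write $H=\prdual{(G,\tau)}=\Hom((G,\tau),\T)$ for the precompact dual, viewed as a discrete abelian group, and recall from Section \ref{precompact} that $\prdual\phi(\chi)=\chi\circ\phi$. For $A\le G$ set $A^\perp=\{\chi\in H:\chi(A)=0\}$ and for $B\le H$ set $B_\perp=\{g\in G:\chi(g)=0 \text{ for all }\chi\in B\}$.

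The first step is to establish the inclusion-reversing bijection between $\CC_\tau(G)$ and the finite subgroups of $H$ given by $N\mapsto N^\perp$, with inverse $F\mapsto F_\perp$. If $N\in\CC_\tau(G)$ then every $\chi\in N^\perp$ factors through the finite discrete quotient $G/N$, so $N^\perp\cong\widehat{G/N}$ is a finite subgroup of $H$ with $|N^\perp|=[G:N]$; conversely a finite subgroup $F\le H$ consists of torsion characters, each with open finite-index kernel, so $F_\perp=\bigcap_{\chi\in F}\ker\chi\in\CC_\tau(G)$. That the two assignments are mutually inverse is finite Pontryagin biduality carried out in the finite groups $G/N$ and $G/F_\perp$, where the continuous characters trivial on the relevant open subgroup exhaust the full finite dual; precompactness (Hausdorffness) is exactly what guarantees that these characters separate points, so that $\tau$ is recovered and the correspondence is genuine.

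The heart of the argument is the dualization of the cotrajectories. From $\phi(g)\in N\Leftrightarrow g\in\phi^{-1}(N)$ one gets an injection $G/\phi^{-1}(N)\hookrightarrow G/N$ whose dual is precisely $\prdual\phi$ restricted to $N^\perp$; finite Pontryagin duality turns this injection into a surjection $N^\perp\twoheadrightarrow(\phi^{-1}(N))^\perp$, giving $(\phi^{-i}(N))^\perp=(\prdual\phi)^i(N^\perp)$ for all $i\ge0$. Since each $\phi^{-i}(N)$ with $0\le i\le n-1$ contains $B_n(\phi,N)$, everything happens inside the finite group $G/B_n(\phi,N)$, where the annihilator of an intersection is the sum of the annihilators; hence
\begin{equation*}
T_n(\prdual\phi,N^\perp)=\sum_{i=0}^{n-1}(\prdual\phi)^i(N^\perp)=\sum_{i=0}^{n-1}(\phi^{-i}(N))^\perp=B_n(\phi,N)^\perp\cong\Pdual{C_n(\phi,N)}.
\end{equation*}
Taking cardinalities yields $|T_n(\prdual\phi,N^\perp)|=|C_n(\phi,N)|$ for every $n$, so $H(\prdual\phi,N^\perp)=H^\star(\phi,N)$ directly from the defining limits. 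Taking the supremum over the two matched families — finite subgroups $F=N^\perp$ of $H$ on one side, $\tau$-open finite-index subgroups $N=F_\perp$ on the other — then gives $\ent(\prdual\phi)=\aent_\tau(\phi)$.

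The step I expect to be the main obstacle is the middle one: the annihilator identities $(\phi^{-i}(N))^\perp=(\prdual\phi)^i(N^\perp)$ and $\bigl(\bigcap_i A_i\bigr)^\perp=\sum_i A_i^\perp$ are usually justified through the full Pontryagin machinery, which is unavailable here because a precompact $(G,\tau)$ need not be locally compact. The remedy, and the technical care required, is to notice that all characters in sight vanish on the open finite-index subgroup $B_n(\phi,N)$ and hence factor through the finite quotient $G/B_n(\phi,N)$, so that every identity reduces to elementary duality of finite abelian groups; the only genuinely duality-theoretic input is the identification, furnished by the precompact duality of Section \ref{precompact}, of $\prdual{(G,\tau)}$ with the discrete group of continuous characters and of $\prdual\phi$ with precomposition. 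As an alternative route one may instead pass to the compact completion $\widetilde{(G,\tau)}$, identify $H$ with its Pontryagin dual and $\prdual\phi$ with the dual of the extended endomorphism, and invoke the already proven Bridge Theorem \ref{BT!-intro}; in that case the obstacle shifts to proving the completion-invariance $\aent_\tau(\phi)=\aent_{\widetilde\tau}(\widetilde\phi)$, via the bijection $N\mapsto\overline N$ between the open finite-index subgroups of $G$ and of $\widetilde G$.
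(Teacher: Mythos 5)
Your proof is correct and follows essentially the same route as the paper's: the paper's proof of Theorem \ref{ent-adj-top} sets $F=N^\bot$ for $N\in\CC_\tau(G)$ and invokes Fact \ref{prd}(a),(c),(b) to obtain exactly your identities $(\phi^{-n}(N))^\bot=(\prdual\phi)^n(F)$ and $B_n(\phi,N)^\bot=T_n(\prdual\phi,F)$, concluding $|C_n(\phi,N)|=|T_n(\prdual\phi,F)|$ for every $n$. The only difference is packaging: where the paper cites Fact \ref{prd} and leaves implicit the bijection $N\mapsto N^\bot$ between $\CC_\tau(G)$ and the finite subgroups of $\prdual{(G,\tau)}$ needed to match the two suprema, you re-derive those annihilator identities by reduction to the finite quotients $G/B_n(\phi,N)$ and make the bijection explicit, which is a sound (and slightly more self-contained) justification of the same argument.
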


The main examples in the context of entropy are the Bernoulli shifts; for $K$ an abelian group,
\begin{itemize}
\item[(a)] the \emph{right  Bernoulli shift} $\beta_K$ and the \emph{left Bernoulli shift} ${}_K\beta$ of the group $K^{\N}$ are defined respectively by 
$$\beta_K(x_0,x_1,x_2,\ldots)=(0,x_0,x_1,\ldots)\ \mbox{and}\ {}_K\beta(x_0,x_1,x_2,\ldots)=(x_1,x_2,x_3,\ldots);$$
\item[(b)] the \emph{two-sided Bernoulli shift} $\overline{\beta}_K$ of the group $K^{\Z}$ is defined by 
$$\overline\beta_K((x_n)_{n\in\Z})=(x_{n-1})_{n\in\Z}, \mbox{ for } (x_n)_{n\in\Z}\in K^{\Z}.$$
\end{itemize}
Since $K^{(\N)}$ is both $\beta_K$-invariant and ${}_K\beta$-invariant, and $K^{(\Z)}$ is $\overline\beta_K$-invariant, let $$\beta_K^\oplus=\beta_K\restriction_{K^{(\N)}},\ {}_K\beta^\oplus= {}_K\beta\restriction_{K^{(\N)}}\ \text{and}\ \overline\beta_K^\oplus=\overline\beta_K\restriction_{K^{(\Z)}}.$$
The adjoint algebraic entropy takes value infinity on the Bernoulli shifts, that is, if $K$ is a non-trivial abelian group, then 
$$\aent(\beta_K^\oplus)=\aent({}_K\beta^\oplus)=\aent(\overline\beta_K^\oplus)=\infty.$$
These values were calculated in \cite[Proposition 6.1]{DGS} applying \cite[Corollary 6.5]{AGB} and the Pontryagin duality (see Example \ref{beta}); we give here a direct computation in Proposition \ref{beta-d}. 
This result was one of the main steps in proving the above mentioned \cite[Theorem 7.6]{DGS} showing that the adjoint algebraic entropy takes values only in $\{0,\infty\}$.

\subsection*{Aknowledgements}

I would like to thank Professor Dikranjan for his very useful comments and suggestions.

\subsection*{Notation and Terminology}

We denote by $\mathbb Z$, $\mathbb N$, $\mathbb N_+$, $\Q$ and $\R$ respectively the integers, the natural numbers, the positive integers, the rationals and the reals. For $m\in\mathbb N_+$, we use $\mathbb Z(m)$ for the finite cyclic group of order $m$. Moreover, we consider $\mathbb T=\mathbb R/\mathbb Z$ endowed with its compact topology.

Let $G$ be an abelian group. 
For a set $X$ we denote by $G^{(X)}$ the direct sum $\bigoplus_X G$ of $|X|$ many copies of $G$. 
Moreover, $\End(G)$ is the ring of all endomorphisms of $G$. We denote by $0_G$ and $id_G$ respectively the endomorphism of $G$ which is identically $0$ and the identity endomorphism of $G$.

For a topological abelian group $(G,\tau)$, if $X$ is a subset of $G$, then $\overline X^\tau$ denotes the closure of $X$ in $(G,\tau)$; if there is no possibility of confusion we write simply $\overline X$. Moreover, if $H$ is a subgroup of $G$, we denote by $\tau_q$ the quotient topology of $\tau$ on $G/H$. If $(G,\tau)$ is Hausdorff, then $\widetilde{(G,\tau)}$ is the completion of $(G,\tau)$ and for $\phi:(G,\tau)\to (G,\tau)$ a continuous endomorphism, $\widetilde\phi:\widetilde{(G,\tau)}\to\widetilde{(G,\tau)}$ is the (unique) continuous extension of $\phi$ to $\widetilde{(G,\tau)}$. Furthermore, $c(G)$ denotes the connected component of $G$.

\section{Background on dualities}\label{Pontryagin}\label{precompact}

Let $(G,\tau)$ be an abelian group. The \emph{dual} group $\dual{(G,\tau)}$ of $(G,\tau)$ is the group of all continuous homomorphisms $(G,\tau)\to\mathbb T$, endowed with the discrete topology. If $\phi:(G,\tau)\to (G,\tau)$ is a continuous endomorphism, its dual endomorphism
\begin{center}
$\dual\phi:\dual{(G,\tau)}\to \dual{(G,\tau)}$ is defined by $\dual\phi(\chi)=\chi\circ\phi$ for every $\chi\in\dual{(G,\tau)}$. 
\end{center}
For $A\subseteq G$ and $B\subseteq \dual{(G,\tau)}$, the \emph{annihilator} $A^\bot$ of $A$ in $\dual{(G,\tau)}$ and the \emph{annihilator} $B^\top$ of $B$ in $G$ are respectively $$A^\bot=\{\chi\in\dual{(G,\tau)}:\chi(A)=0\}\ \text{and}\ B^\top=\{g\in G: \chi(g)=0,\forall \chi\in B\}.$$ 
Moreover, one can consider the map
$$\omega_G: (G,\tau)\to (G,\tau)''\ \text{defined by}\ \omega_G(g)(\chi)=\chi(g)\ \text{for every}\ g\in G\ \text{and}\ \chi\in\dual{(G,\tau)}.$$

\smallskip
For a locally compact abelian group $(G,\tau)$ the Pontryagin dual $\Pdual {(G,\tau)}$ of $(G,\tau)$ is $\dual{(G,\tau)}$ endowed with the compact-open topology \cite{P}, denoted here by $\Pdual\tau$. The Pontryagin dual of a locally compact abelian group is locally compact as well, and the Pontryagin dual of a (discrete) abelian group is always compact \cite{HR,P}. The map $\omega_G: (G,\tau)\to \widehat{\widehat{(G,\tau)}}$ is a topological isomorphism.
Moreover, for a continuos endomorphism $\phi:(G,\tau)\to (G,\tau)$, its dual endomorphism $\dual\phi:\Pdual{(G,\tau)}\to\Pdual{(G,\tau)}$ is continuous, and it is usually denoted by $$\Pdual\phi:\Pdual{(G,\tau)}\to\Pdual{(G,\tau)}.$$ 
For basic properties concerning the Pontryagin duality see \cite{DPS,HR}.

\medskip
We pass now to the precompact duality, studied mainly in \cite{MO,RT}. 

\smallskip
Assume that $(G,\tau)$ is precompact.
Since $(G,\tau)$ is dense in its compact completion $K=\widetilde{(G,\tau)}$, and since the continuous characters of $(G,\tau)$ are uniformly continuous, they can be extended to $K$, and it follows that $\dual{(G,\tau)}=\dual{K}$.

The \emph{precompact dual} $\prdual{(G,\tau)}$ of a precompact abelian group $(G,\tau)$ is $\dual{(G,\tau)}$ endowed with the weak topology $\prdual\tau$ generated by all elements of $G$ considered as continuous characters; this is the topology of $\dual{(G,\tau)}$ inherited from the product topology of $\mathbb T^G$, since $\dual{(G,\tau)}$ can be considered as a subgroup of $\mathbb T^G$. Then $\prdual{(G,\tau)}$ is a precompact abelian group.
As proved in \cite[Theorem 1]{RT}, the map $\omega_G: (G,\tau)\to (G,\tau)^{\dag\dag}$ is a topological isomorphism. The same result follows from \cite[Propositions 2.8, 3.9 and Theorem 3.11]{MO}. Moreover, if $\phi:(G,\tau)\to (G,\tau)$ is a continuous endomorphism, then the dual endomorphism $\dual\phi:\prdual{(G,\tau)}\to \prdual{(G,\tau)}$ is continuous \cite{RT}, and it is usually denoted by $$\prdual\phi:\prdual{(G,\tau)}\to \prdual{(G,\tau)}.$$

The precompact duality has the same basic properties of the Pontryagin duality. Indeed, also in this case the annihilators are closed subgroups of $\prdual{(G,\tau)}$ and $(G,\tau)$; furthermore, by \cite[Remark 10]{RT}, if $(G,\tau)$ is a precompact abelian group and $H$ a closed subgroup of $(G,\tau)$, then:
\begin{itemize}
\item[(a)] $\prdual{(G/H,\tau_q)}$ is topologically isomorphic to $(H^\bot,\prdual\tau\restriction_{H^\bot})$;
\item[(b)] $\prdual{(H,\tau\restriction_H)}$ is topologically isomorphic to $(\prdual{(G,\tau)}/H^\bot,(\prdual\tau)_q)$;
\item[(c)] the map $H\mapsto H^\bot$ defines an order-inverting bijection between the closed subgroups of $(G,\tau)$ and the closed subgroups of $\prdual{(G,\tau)}$.
\end{itemize}

Moreover, it is possible to prove the following properties, exactly as their counterparts for the Pontryagin duality (see \cite{HR} for (a) and (b), and \cite{DGS} for (c)).

\begin{fact}\label{prd}
\begin{itemize}
\item[(a)] If $F$ is a finite abelian group, then $\prdual F\cong F$.
\item[(b)] If $H_1,\ldots,H_n$ are subgroups of a precompact abelian group $G$, then $(\sum_{i=1}^nH_i)^\bot\cong_{top} \bigcap_{i=1}^nH_i^\bot$ and $(\bigcap_{i=1}^nH_i)^\bot\cong_{top}\overline{\sum_{i=1}^n H_i^\bot}$.
\item[(c)] If $G$ is a precompact abelian group, $H$ a subgroup of $G$ and $\phi:G\to G$ a continuous endomorphism, then $(\phi^{-n}(H))^\bot=(\prdual\phi)^n (H^\bot)$ for every $n\in\N$. 
\end{itemize}
\end{fact}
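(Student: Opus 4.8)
The plan is to prove Fact \ref{prd} by transporting each classical Pontryagin-duality statement to the precompact setting using the annihilator correspondence recorded in items (a)--(c) just before the statement. The common engine will be the order-inverting bijection $H\mapsto H^\bot$ between closed subgroups of $(G,\tau)$ and closed subgroups of $\prdual{(G,\tau)}$, together with the two topological isomorphisms $\prdual{(G/H,\tau_q)}\cong (H^\bot,\prdual\tau\restriction_{H^\bot})$ and $\prdual{(H,\tau\restriction_H)}\cong (\prdual{(G,\tau)}/H^\bot,(\prdual\tau)_q)$. Since all three parts are ``exactly as their counterparts for the Pontryagin duality,'' the work is to check that the only inputs used in those classical arguments are precisely these correspondences, which the precompact duality supplies.

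For part (a), I would argue that a finite abelian group $F$ is automatically discrete and compact, so precompact, and its continuous characters into $\T$ coincide with all its characters; thus $\prdual F=\dual F$ as a group, and the classical fact $\dual F\cong F$ for finite abelian groups (which follows from the structure theorem, reducing to cyclic $\Z(m)$ whose character group is again $\Z(m)$) gives $\prdual F\cong F$. Because $F$ is finite, the weak topology $\prdual\tau$ is discrete, so there is no topological subtlety. For part (b), I would deduce the two annihilator formulas from the correspondence in (c) above together with (a)/(b): the relation $(\sum H_i)^\bot=\bigcap H_i^\bot$ holds at the purely algebraic level because a character kills $\sum H_i$ iff it kills each $H_i$, and one checks this is a topological isomorphism using that both sides are closed subgroups; dually, $(\bigcap H_i)^\bot=\overline{\sum H_i^\bot}$ follows by applying the first formula to the subgroups $H_i^\bot$ and using the double-annihilator identity $(H^\bot)^\top=H$ for closed $H$ (equivalently $\omega_G$ being a topological isomorphism, as stated from \cite{RT}), with the closure appearing precisely because $\sum H_i^\bot$ need not already be closed.

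For part (c), the cleanest route is induction on $n$ combined with the functoriality of annihilators under the dual map. The key single-step identity is $(\phi^{-1}(H))^\bot=\prdual\phi(H^\bot)$: a character $\chi$ lies in $(\phi^{-1}(H))^\bot$ iff $\chi$ vanishes on $\phi^{-1}(H)$, and I would match this against $\prdual\phi(H^\bot)=\{\chi\circ\phi:\chi\in H^\bot\}$ using the definition $\prdual\phi(\chi)=\chi\circ\phi$ and the surjectivity-onto-the-image behaviour of $\phi$, exactly as in the Pontryagin case cited from \cite{DGS}. Iterating, $(\phi^{-n}(H))^\bot=(\prdual\phi)^n(H^\bot)$, since $\phi^{-n}=(\phi^{-1})^n$ and the annihilator of an $n$-fold preimage unfolds to an $n$-fold application of $\prdual\phi$.

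The main obstacle I anticipate is not any single algebraic identity but the topological bookkeeping: one must verify that the group isomorphisms in (a) and (b) are genuinely \emph{topological} isomorphisms for the weak topology $\prdual\tau$ rather than merely abstract ones, and in (b) one must be careful that $\sum_{i=1}^n H_i^\bot$ is closed only after taking closure, which is why the closure is intrinsic to the statement. The safeguard is that every needed topological comparison reduces to the two transported isomorphisms from \cite{RT} quoted above, so once those are invoked the remaining verifications are the same formal manipulations that establish the Pontryagin analogues, and the precompactness hypothesis guarantees $\dual{(G,\tau)}=\dual K$ so that no characters are lost in passing to the completion.
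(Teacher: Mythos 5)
Your overall strategy (transport the Pontryagin statements through the correspondences quoted from \cite{RT}) is the same one the paper gestures at --- in fact the paper gives no proof of this Fact at all, only the remark that the items can be proved ``exactly as their counterparts for the Pontryagin duality,'' with citations to \cite{HR} and \cite{DGS}. Your part (a) and the first identity in (b) are correct. But your treatment of (c) (and, less seriously, of the second identity in (b)) has a genuine gap, and it sits exactly at the point where the precompact setting differs from the compact/discrete one. In (c), the inclusion $(\prdual\phi)(H^\bot)\subseteq(\phi^{-1}(H))^\bot$ is formal, but the reverse inclusion is not: given a continuous $\psi$ vanishing on $\phi^{-1}(H)$, you must produce a \emph{continuous} $\chi\in H^\bot$ with $\psi=\chi\circ\phi$, i.e.\ extend the character induced on $\Im\phi$ continuously to $G$. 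In \cite{DGS} this is painless because there $G$ is discrete (so every algebraic extension supplied by divisibility of $\T$ is automatically continuous) and the dual is compact (so $(\Pdual\phi)^n(H^\bot)$ is automatically closed). Neither holds here, and in fact the identity is \emph{false} for arbitrary subgroups $H$: take $G=\Z$ with the precompact topology $\tau_\alpha$ induced by $1\mapsto\alpha\in\T$ with $\alpha$ of infinite order (the group of the paper's Example \ref{exx}), $\phi=\mu_2$ and $H=0$. Every continuous character of $(\Z,\tau_\alpha)$ has the form $\chi_n:k\mapsto nk\alpha$, so $\prdual\phi(H^\bot)=\{\chi_n\circ\mu_2:n\in\Z\}=\{\chi_{2n}:n\in\Z\}$, whereas $(\phi^{-1}(H))^\bot=(\ker\mu_2)^\bot=0^\bot$ is the whole dual; and $\chi_1=\chi_{2n}$ would force $(2n-1)\alpha\in\Z$, which is impossible. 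So no purely formal manipulation, and no appeal to ``the same argument as the Pontryagin case,'' can close your proof of (c).

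The repair is to prove (c) in the generality in which the paper actually uses it (Theorem \ref{ent-adj-top}), namely for $H=N\in\CC_\tau(G)$ open of finite index. Then $\phi^{-1}(N)$ is again open of finite index, any $\psi\in(\phi^{-1}(N))^\bot$ factors through the finite discrete quotient $G/\phi^{-1}(N)$, the map $G/\phi^{-1}(N)\to G/N$ induced by $\phi$ is injective, so divisibility (injectivity) of $\T$ extends the induced character across it, and the resulting $\chi\in N^\bot$ is continuous \emph{because $N$ is open}; induction on $n$ then works since openness and finite index are preserved under $\phi^{-1}$. The same caveat applies to your second identity in (b): your double-annihilator argument actually yields $\bigl(\bigcap_i\overline{H_i}\bigr)^\bot=\overline{\sum_i H_i^\bot}$, which coincides with $\bigl(\bigcap_i H_i\bigr)^\bot$ only when the $H_i$ are closed; for the two dense subgroups $\Z[1/2]/\Z$ and $\Z[1/3]/\Z$ of $\T$, the intersection is trivial, so its annihilator is all of $\dual\T\cong\Z$, while both individual annihilators vanish. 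Since in the paper's application the relevant subgroups are the open (hence closed) subgroups $\phi^{-i}(N)$, the closed case suffices, but your write-up must state these restrictions explicitly rather than claim both identities for arbitrary subgroups, for which they fail.
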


\section{Basic properties of the topological adjoint entropy}\label{aent-sec}

In this section we show the basic properties of the topological adjoint entropy, with respect to the usual basic properties possessed by the known entropies, as in particular the algebraic entropy and the topological entropy. These properties were discussed in \cite[Section 8]{DGS} for the adjoint algebraic entropy. Since our setting involving topology is more general, they require a new verification, even if formulas from \cite{DGS} are applied in the proofs.

\medskip
We start giving the following easy observation about the monotonicity of $H^\star(\phi,-)$ with respect to subgroups.

\begin{lemma}\label{N<M->HN>HM}
Let $G$ be an abelian group, $\phi\in\End(G)$ and $N,M\in\CC(G)$. If $N\subseteq M$, then $B_n(\phi,N)\subseteq B_n(\phi,M)$ and so $|C_n(\phi,N)|\geq|C_n(\phi,M)|$ for every $n\in\N_+$. Therefore, $H^\star(\phi,N)\geq H^\star(\phi,M)$. \hfill$\qed$
\end{lemma}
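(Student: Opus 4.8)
The plan is to verify the three assertions in the order in which they are stated, since each one feeds directly into the next and all three are elementary monotonicity facts; the single idea driving everything is that forming preimages under $\phi$ preserves inclusions of subgroups.

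First I would establish the containment $B_n(\phi,N)\subseteq B_n(\phi,M)$. From $N\subseteq M$ and the monotonicity of the preimage operator, $\phi^{-i}(N)\subseteq\phi^{-i}(M)$ for each $0\le i\le n-1$; intersecting these inclusions over $i=0,\ldots,n-1$ gives $B_n(\phi,N)=\bigcap_{i=0}^{n-1}\phi^{-i}(N)\subseteq\bigcap_{i=0}^{n-1}\phi^{-i}(M)=B_n(\phi,M)$. Both intersections lie in $\CC(G)$ by the stability of $\CC(G)$ under preimages and finite intersections recalled in the introduction, so the cotrajectories appearing below are finite groups.

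Next, from the containment $B_n(\phi,N)\subseteq B_n(\phi,M)$ the canonical projection $G/B_n(\phi,N)\to G/B_n(\phi,M)$ is a well-defined surjective homomorphism, that is, $C_n(\phi,N)$ surjects onto $C_n(\phi,M)$; hence $|C_n(\phi,N)|\geq|C_n(\phi,M)|$ for every $n\in\N_+$. Taking logarithms, dividing by $n$, and letting $n\to\infty$ then yields $H^\star(\phi,N)\geq H^\star(\phi,M)$, using that both limits exist and are finite by \eqref{C=C_n->ent*=0} together with the elementary fact that a termwise inequality between convergent sequences passes to the limit.

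I do not expect any genuine obstacle: each step is an immediate consequence of monotonicity of the preimage, of the quotient construction, and of limits of real sequences. The only point deserving a moment's attention is the bookkeeping ensuring $B_n(\phi,N),B_n(\phi,M)\in\CC(G)$, so that the quotients $C_n$ are finite and the defining limits in \eqref{H*} make sense; this is immediate from the stability properties of $\CC(G)$ already noted, and no limit argument finer than monotonicity is needed.
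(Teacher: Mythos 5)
Your proof is correct and is precisely the elementary monotonicity argument the paper has in mind; indeed, the paper states this lemma with no written proof (marking it $\qed$ as immediate), and your three steps — preimages preserve inclusions, inclusions of subgroups induce surjections of the quotients, and termwise inequalities pass to the limit defining $H^\star$ — fill in exactly what is being taken for granted. Nothing is missing and no different route is taken.
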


We have also the following monotonicity with respect to the topology.

\begin{lemma}\label{basic-tau-ent-b}
Let $G$ be an abelian group and $\tau,\tau'$ group topologies on $G$.
If $\tau\leq \tau'$ on $G$, then $\aent_{\tau}(\phi)\leq \aent_{\tau'}(\phi)$.
\end{lemma}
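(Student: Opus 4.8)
The plan is to reduce the desired inequality to a containment of the two index families over which the suprema in \eqref{5} are taken, exploiting the fact that the local quantity $H^\star(\phi,N)$ carries no topological information.

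First I would record that $H^\star(\phi,N)$, as defined in \eqref{H*} through the cotrajectories $C_n(\phi,N)=G/B_n(\phi,N)$ with $B_n(\phi,N)=N\cap\phi^{-1}(N)\cap\ldots\cap\phi^{-n+1}(N)$, is a purely algebraic invariant of the pair $(\phi,N)$: it depends only on the endomorphism $\phi$ and on the subgroup $N\in\CC(G)$, and not on any ambient topology on $G$. Consequently, the topology enters the definition of $\aent_\tau(\phi)$ only through the choice of the indexing family $\CC_\tau(G)$. Next, from $\tau\leq\tau'$ it follows that every $\tau$-open subset of $G$ is a fortiori $\tau'$-open; in particular every $\tau$-open finite-index subgroup is $\tau'$-open, so that $\CC_\tau(G)\subseteq\CC_{\tau'}(G)$. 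Since a supremum taken over a subset never exceeds the supremum over the ambient set, and since in both cases we are taking the supremum of the very same function $N\mapsto H^\star(\phi,N)$, we conclude
$$\aent_\tau(\phi)=\sup\{H^\star(\phi,N):N\in\CC_\tau(G)\}\leq\sup\{H^\star(\phi,N):N\in\CC_{\tau'}(G)\}=\aent_{\tau'}(\phi),$$
which is the claim.

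There is no genuine obstacle in this argument; the only point worth flagging is the absence of any hidden topological dependence in $H^\star(\phi,N)$, which is immediate from the definition, together with the observation that no continuity hypothesis on $\phi$ is in fact used. Indeed $H^\star(\phi,N)$ is well defined for an arbitrary endomorphism $\phi$ and an arbitrary $N\in\CC(G)$, because $\phi^{-1}(N)\in\CC(G)$ for any endomorphism and $\CC(G)$ is stable under finite intersections; this is consistent with the remark following the definition of $\aent_\tau$ that the basic properties persist even when continuity of $\phi$ is dropped. Thus the inclusion $\CC_\tau(G)\subseteq\CC_{\tau'}(G)$ is the whole content of the lemma.
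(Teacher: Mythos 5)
Your proof is correct and follows exactly the argument the paper intends: the lemma is stated there without proof precisely because $\tau\leq\tau'$ gives $\CC_\tau(G)\subseteq\CC_{\tau'}(G)$, while $H^\star(\phi,N)$ depends only on $\phi$ and $N$, so the inequality is just monotonicity of suprema over nested index families. Your additional remark that no continuity of $\phi$ is needed is also consistent with the paper's discussion following the definition of $\aent_\tau$.
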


\begin{example}
It easily follows from the definition that $\aent_\tau(0_G)=\aent_\tau(id_G)=0$ for any topological abelian group $(G,\tau)$.
\end{example}

The first property is the invariance under conjugation.

\begin{lemma}\label{cbi}
Let $(G,\tau)$ be a topological abelian group and $\phi:(G,\tau)\to(G,\tau)$ a continuous endomorphism. If $(H,\sigma)$ is another topological abelian group and $\xi:(G,\tau)\to (H,\sigma)$ a topological isomorphism, then $$\aent_\sigma(\xi\circ\phi\circ\xi^{-1})=\aent_\tau(\phi).$$
\end{lemma}
\begin{proof}
Let $N\in\CC_\sigma(H)$ and call $\theta=\xi\circ\phi\circ\xi^{-1}$. Since $\xi$ is a topological isomorphism, $\xi^{-1}(N)\in\CC_\tau(G)$.
In the proof of \cite[Lemma 4.3]{DGS}, it is shown that
$H^\star(\theta,N)=H^\star(\phi,\xi^{-1}(N))$ for every $N\in\CC_\sigma(H)$, and hence $\aent_\sigma(\theta)=\aent_\tau(\phi)$.
\end{proof}

The second property is the so-called logarithmic law.

\begin{lemma}\label{ll}
Let $(G,\tau)$ be a topological abelian group and $\phi:(G,\tau)\to(G,\tau)$ a continuous endomorphism. Then for every $k\in\N_+$, $$\aent_\tau(\phi^k)=k\cdot \aent_\tau(\phi).$$
\end{lemma}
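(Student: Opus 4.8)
The plan is to establish the two inequalities $\aent_\tau(\phi^k)\le k\cdot\aent_\tau(\phi)$ and $\aent_\tau(\phi^k)\ge k\cdot\aent_\tau(\phi)$ separately, each reduced to a purely combinatorial comparison between the cotrajectories of $\phi$ and those of $\phi^k$. The basic observation, to be recorded first, is the description of the cotrajectory subgroups: by definition $B_n(\phi^k,N)=\bigcap_{j=0}^{n-1}\phi^{-jk}(N)$, so it intersects only over the indices that are multiples of $k$ among $0,1,\dots,nk-1$, whereas $B_{nk}(\phi,N)=\bigcap_{m=0}^{nk-1}\phi^{-m}(N)$ intersects over all of them. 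Throughout I shall use that $\phi^k$ is continuous and that $\CC_\tau(G)$, which does not depend on the endomorphism, is stable under finite intersections and under inverse images along any continuous endomorphism, in particular along $\phi$ and $\phi^k$.

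For the upper bound, fix $N\in\CC_\tau(G)$. The inclusion $B_{nk}(\phi,N)\subseteq B_n(\phi^k,N)$ holds because the intersection defining the former runs over a larger set of indices; passing to the quotients therefore gives $|C_n(\phi^k,N)|\le|C_{nk}(\phi,N)|$ for every $n\in\N_+$. Writing $\frac1n\log|C_{nk}(\phi,N)|=k\cdot\frac1{nk}\log|C_{nk}(\phi,N)|$ and letting $n\to\infty$, the right-hand factor converges to $H^\star(\phi,N)$ as a subsequence of the defining (convergent) sequence, so $H^\star(\phi^k,N)\le k\cdot H^\star(\phi,N)$. Taking the supremum over $N\in\CC_\tau(G)$ yields $\aent_\tau(\phi^k)\le k\cdot\aent_\tau(\phi)$.

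For the lower bound, fix again $N\in\CC_\tau(G)$ and set $M=B_k(\phi,N)=\bigcap_{i=0}^{k-1}\phi^{-i}(N)$; then $M\in\CC_\tau(G)$ precisely because $\CC_\tau(G)$ is closed under finite intersections and under $\phi^{-1}$, which uses the continuity of $\phi$. The key identity to verify is $B_n(\phi^k,M)=B_{nk}(\phi,N)$: since inverse images commute with intersections, $\phi^{-jk}(M)=\bigcap_{i=0}^{k-1}\phi^{-(jk+i)}(N)$, and as the pair $(j,i)$ ranges over $\{0,\dots,n-1\}\times\{0,\dots,k-1\}$ the value $jk+i$ ranges bijectively over $\{0,\dots,nk-1\}$, reconstituting the full intersection defining $B_{nk}(\phi,N)$. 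Consequently $|C_n(\phi^k,M)|=|C_{nk}(\phi,N)|$, whence $H^\star(\phi^k,M)=k\cdot H^\star(\phi,N)$ by the same limit computation as above. Thus $\aent_\tau(\phi^k)\ge H^\star(\phi^k,M)=k\cdot H^\star(\phi,N)$ for every $N$, and taking the supremum over $N$ gives $\aent_\tau(\phi^k)\ge k\cdot\aent_\tau(\phi)$. Combining the two inequalities completes the proof.

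The computations are routine; the only genuine content is the choice $M=B_k(\phi,N)$, which manufactures, from an arbitrary open finite-index subgroup $N$, an open finite-index subgroup whose $\phi^k$-cotrajectory reproduces the $\phi$-cotrajectory of $N$ stretched by the factor $k$. The one point demanding care is the membership $M\in\CC_\tau(G)$, where the continuity of $\phi$ is essential; this is exactly the place where the topological adjoint entropy mirrors the adjoint algebraic entropy, and the corresponding formulas from \cite[Section 8]{DGS} may be invoked directly. The existence of the limits involved is guaranteed by \eqref{C=C_n->ent*=0}.
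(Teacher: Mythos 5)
Your proof is correct and follows essentially the same route as the paper: the lower bound via the identity $C_{nk}(\phi,N)=C_n(\phi^k,B_k(\phi,N))$ with the auxiliary subgroup $M=B_k(\phi,N)\in\CC_\tau(G)$, and the upper bound via comparing $H^\star(\phi^k,N)$ with $k\cdot H^\star(\phi,N)$ along the subsequence $C_{nk}(\phi,N)$. The only difference is that you write out the computations which the paper delegates to the proof of \cite[Lemma 4.4]{DGS}, which is a welcome expansion rather than a deviation.
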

\begin{proof}
For $N\in\CC_\tau(G)$, fixed $k\in\N_+$, for every $n\in\N_+$ we have
$C_{nk}(\phi,N )=C_n(\phi^k,B_k(\phi,N))$.
Then, by the proof of \cite[Lemma 4.4]{DGS}, $$k\cdot H^\star(\phi,N)=H^\star(\phi^k,B_k(\phi,N))\leq\aent_\tau(\phi^k).$$
Consequently, $k\cdot \aent_\tau(\phi)\leq\aent_\tau(\phi^k)$.

Now we prove the converse inequality. Indeed, by the proof of \cite[Lemma 4.4]{DGS}, for $N\in\CC_\tau(G)$ and for $k\in\N_+$,
$$\aent_\tau(\phi)\geq H^\star(\phi,N)\geq \frac{H^\star(\phi^k,N)}{k}.$$
This shows that $k\cdot \aent_\tau(\phi)\geq \aent_\tau(\phi^k)$, that concludes the proof.
\end{proof}

The next lemma shows that a topological automorphism has the same topological adjoint entropy as its inverse.

\begin{lemma}
Let $(G,\tau)$ be a topological abelian group and $\phi:(G,\tau)\to(G,\tau)$ a topological automorphism. Then $\aent_\tau(\phi)=\aent_\tau(\phi^{-1})$.
\end{lemma}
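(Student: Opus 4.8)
The plan is to show that for a topological automorphism $\phi$, the cotrajectories of $\phi$ and $\phi^{-1}$ with respect to a fixed open finite-index subgroup $N$ have the same cardinality, which will immediately give equality of the entropies with respect to $N$, and hence equality of the suprema. First I would observe that since $\phi$ is a topological automorphism, $\phi^{-1}$ is also a continuous endomorphism, and crucially $\CC_\tau(G)$ is preserved: $N\in\CC_\tau(G)$ if and only if $\phi(N)\in\CC_\tau(G)$, since $\phi$ maps open finite-index subgroups bijectively to open finite-index subgroups. So the two suprema range over the same family of subgroups, up to the action of $\phi$.

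The key computation is to relate $B_n(\phi^{-1},N)$ to the image of $B_n(\phi,N)$ under some power of $\phi$. Writing out
\[
B_n(\phi^{-1},N)=N\cap\phi(N)\cap\phi^2(N)\cap\ldots\cap\phi^{n-1}(N),
\]
since $(\phi^{-1})^{-1}=\phi$, I would compare this with
\[
B_n(\phi,N)=N\cap\phi^{-1}(N)\cap\ldots\cap\phi^{-n+1}(N).
\]
Applying the automorphism $\phi^{n-1}$ to $B_n(\phi,N)$ and using that $\phi$ is bijective (so it commutes with intersections) yields
\[
\phi^{n-1}(B_n(\phi,N))=\phi^{n-1}(N)\cap\phi^{n-2}(N)\cap\ldots\cap N=B_n(\phi^{-1},N).
\]
Because $\phi^{n-1}$ is an automorphism of $G$, it maps $B_n(\phi,N)$ isomorphically onto $B_n(\phi^{-1},N)$ and induces an isomorphism of the quotients $C_n(\phi,N)=G/B_n(\phi,N)\cong G/B_n(\phi^{-1},N)=C_n(\phi^{-1},N)$. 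In particular $|C_n(\phi,N)|=|C_n(\phi^{-1},N)|$ for every $n$, whence $H^\star(\phi,N)=H^\star(\phi^{-1},N)$ directly from the defining limit \eqref{H*}.

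Taking the supremum over all $N\in\CC_\tau(G)$ then gives $\aent_\tau(\phi)=\aent_\tau(\phi^{-1})$. I expect the only delicate point to be verifying that $\phi^{n-1}$ genuinely commutes with the intersection and reindexes the terms correctly; this is routine once one uses bijectivity, but the reindexing $\phi^{-i}\mapsto\phi^{n-1-i}$ must be tracked carefully to confirm the two intersections really coincide as written. Everything else is formal: the preservation of $\CC_\tau(G)$ and the passage to the supremum. An alternative, slightly cleaner route would be to invoke the logarithmic law combined with the conjugation invariance of Lemma \ref{cbi} — noting $\phi^{-1}=\phi^{-1}\circ\phi\circ\phi^{-1}$ realizes a conjugation-type relation — but the direct cardinality argument above is the most transparent and self-contained.
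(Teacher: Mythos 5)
Your proof is correct and takes essentially the same approach as the paper: the paper also reduces to the per-subgroup equality $H^\star(\phi,N)=H^\star(\phi^{-1},N)$ for each $N\in\CC_\tau(G)$ and then takes suprema, deferring that equality to the proof of \cite[Lemma 4.5]{DGS}, which is precisely your computation $\phi^{n-1}(B_n(\phi,N))=B_n(\phi^{-1},N)$ together with the induced isomorphism $C_n(\phi,N)\cong C_n(\phi^{-1},N)$. One caveat: your closing ``alternative route'' via Lemma \ref{cbi} is spurious, since conjugating $\phi$ by $\xi=\phi^{-1}$ just returns $\phi$ itself and so yields no information about $\phi^{-1}$; fortunately this aside plays no role in your main, self-contained argument.
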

\begin{proof}
For every $n\in\N_+$ and every $N\in\CC_\tau(G)$, we have $H^\star(\phi,N)=H^\star(\phi^{-1},N)$ by the proof of \cite[Lemma 4.5]{DGS}, and hence $\aent_\tau(\phi)=\aent_\tau(\phi^{-1})$.
\end{proof}

The following corollary is a direct consequence of the previous two results.

\begin{corollary}
Let $(G,\tau)$ be a topological abelian group and $\phi:(G,\tau)\to(G,\tau)$ a topological automorphism. Then $\aent_\tau(\phi^k)=|k|\cdot \aent_\tau(\phi)$ for every $k\in\Z$. 
\end{corollary}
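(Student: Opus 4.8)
The plan is to reduce the statement to the two immediately preceding lemmas by splitting into cases according to the sign of $k$. Observe first that $\phi$ being a topological automorphism is exactly what makes all the relevant powers $\phi^k$ (for $k\in\Z$) continuous endomorphisms, so that $\aent_\tau(\phi^k)$ is defined and the earlier results apply to them.

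For $k\in\N_+$ the assertion is literally the logarithmic law, Lemma~\ref{ll}: since $|k|=k$ we get $\aent_\tau(\phi^k)=k\cdot\aent_\tau(\phi)=|k|\cdot\aent_\tau(\phi)$. For $k=0$ we have $\phi^0=id_G$, so by the earlier observation that $\aent_\tau(id_G)=0$ the identity $\aent_\tau(\phi^0)=0=|0|\cdot\aent_\tau(\phi)$ holds trivially for any value of $\aent_\tau(\phi)$ (including $\infty$, where we read $0\cdot\infty=0$ as usual).

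The substantive case is $k<0$. Here I would write $k=-m$ with $m\in\N_+$ and use $\phi^k=(\phi^{-1})^m$. Since $\phi$ is a topological automorphism, $\phi^{-1}$ is again a continuous endomorphism, so applying Lemma~\ref{ll} to $\phi^{-1}$ gives $\aent_\tau\big((\phi^{-1})^m\big)=m\cdot\aent_\tau(\phi^{-1})$. Combining this with the preceding lemma, which states $\aent_\tau(\phi^{-1})=\aent_\tau(\phi)$, yields $\aent_\tau(\phi^k)=m\cdot\aent_\tau(\phi)=|k|\cdot\aent_\tau(\phi)$, as required.

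There is no genuine obstacle beyond careful bookkeeping: the only point requiring attention is that in the negative case the logarithmic law is invoked for the automorphism $\phi^{-1}$ rather than for $\phi$, which is legitimate precisely because $\phi$ is assumed to be a topological automorphism, so both Lemma~\ref{ll} and the inverse lemma are available for $\phi^{-1}$. Thus the corollary follows by pasting together the three cases.
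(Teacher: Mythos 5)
Your proof is correct and matches the paper's intent exactly: the paper gives no written argument, simply declaring the corollary ``a direct consequence of the previous two results,'' and your case split (Lemma~\ref{ll} for $k>0$, the identity observation for $k=0$, and $\phi^k=(\phi^{-1})^{|k|}$ combined with $\aent_\tau(\phi^{-1})=\aent_\tau(\phi)$ for $k<0$) is precisely that intended deduction. Your explicit handling of the $k=0$ case with the convention $0\cdot\infty=0$ is a minor point the paper leaves implicit, but it is the right reading.
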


The next property, a monotonicity law for induced endomorphisms on quotients over invariant subgroups, will be often used in the sequel.

\begin{lemma}\label{quotient}
Let $(G,\tau)$ be a topological abelian group, $\phi:(G,\tau)\to(G,\tau)$ a continuous endomorphism and $H$ a $\phi$-invariant subgroup of $G$. Then $\aent_\tau(\phi)\geq \aent_{\tau_q}(\overline\phi)$, where $\overline \phi:(G/H,\tau_q)\to (G/H,\tau_q)$ is the continuous endomorphism induced by $\phi$. 
\end{lemma}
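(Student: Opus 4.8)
The plan is to show that every open finite-index subgroup of the quotient $G/H$ pulls back to an open finite-index subgroup of $G$, and that the cotrajectories match up under this correspondence, so that each $H^\star(\overline\phi, \overline N)$ is realized as some $H^\star(\phi, N)$ and the supremum over the quotient cannot exceed the supremum over $G$. Let $\pi\colon (G,\tau)\to (G/H,\tau_q)$ be the canonical projection. First I would take an arbitrary $\overline N\in\CC_{\tau_q}(G/H)$ and set $N=\pi^{-1}(\overline N)$. Since $\pi$ is continuous and $\overline N$ is $\tau_q$-open, $N$ is $\tau$-open; since $\overline N$ has finite index in $G/H$ and $\pi$ is surjective, $N$ has the same finite index in $G$; thus $N\in\CC_\tau(G)$. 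This is the step that genuinely uses the hypotheses, and it is where the analogue in \cite[Lemma 4.6]{DGS} for $\CC(G)$ gets upgraded to $\CC_\tau(G)$ — the only new point being the preservation of openness, which is immediate from continuity of $\pi$.

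Next I would compare the cotrajectories. Because $H$ is $\phi$-invariant we have $\overline\phi\circ\pi=\pi\circ\phi$, hence $\overline\phi^{\,j}\circ\pi=\pi\circ\phi^{j}$ for all $j$, and consequently $\pi^{-1}(\overline\phi^{-j}(\overline N))=\phi^{-j}(\pi^{-1}(\overline N))=\phi^{-j}(N)$. Intersecting over $j=0,\dots,n-1$ and using that $\pi^{-1}$ commutes with intersections gives
\begin{equation*}
\pi^{-1}(B_n(\overline\phi,\overline N))=B_n(\phi,N).
\end{equation*}
Since $\pi$ is surjective with kernel $H\subseteq N=\pi^{-1}(\overline N)$, the induced map $G/B_n(\phi,N)\to (G/H)/B_n(\overline\phi,\overline N)$ is an isomorphism, so $C_n(\phi,N)\cong C_n(\overline\phi,\overline N)$ and in particular $|C_n(\phi,N)|=|C_n(\overline\phi,\overline N)|$ for every $n\in\N_+$. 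Taking logarithms, dividing by $n$ and passing to the limit yields $H^\star(\phi,N)=H^\star(\overline\phi,\overline N)$.

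Finally I would assemble the supremum. For every $\overline N\in\CC_{\tau_q}(G/H)$ the displayed equality gives
\begin{equation*}
H^\star(\overline\phi,\overline N)=H^\star(\phi,N)\leq \sup\{H^\star(\phi,M):M\in\CC_\tau(G)\}=\aent_\tau(\phi),
\end{equation*}
and taking the supremum of the left-hand side over all $\overline N\in\CC_{\tau_q}(G/H)$ gives $\aent_{\tau_q}(\overline\phi)\leq\aent_\tau(\phi)$, as required. The argument is essentially routine once the correspondence $\overline N\mapsto\pi^{-1}(\overline N)$ is set up; the only point demanding care — and the one I would flag as the main (minor) obstacle — is verifying that $\pi^{-1}$ sends $\CC_{\tau_q}(G/H)$ into $\CC_\tau(G)$ rather than merely into $\CC(G)$, i.e.\ that openness is preserved, together with the bookkeeping that the kernel $H$ of $\pi$ sits inside every $B_n(\phi,N)$ so the cotrajectory quotients are genuinely identified.
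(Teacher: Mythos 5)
Your proof is correct and follows essentially the same route as the paper: the paper also identifies each $\overline N\in\CC_{\tau_q}(G/H)$ with a subgroup $N\in\CC_\tau(G)$ containing $H$, shows $B_n(\overline\phi,N/H)=B_n(\phi,N)/H$ (your $\pi^{-1}(B_n(\overline\phi,\overline N))=B_n(\phi,N)$), deduces $C_n(\overline\phi,N/H)\cong C_n(\phi,N)$ by the third isomorphism theorem, and concludes $H^\star(\overline\phi,N/H)=H^\star(\phi,N)$ before taking suprema. The only difference is notational ($\pi^{-1}(\overline N)$ versus writing the subgroup as $N/H$), so no further comparison is needed.
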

\begin{proof}
Let $N/H\in\CC_{\tau_q}(G/H)$; then $N\in\CC_\tau(G)$. 
Since $H\subseteq N$ and $H$ is $\phi$-invariant, $H\subseteq \phi^{-n}(N)$ for every $n\in\N$. Consequently, $H\subseteq B_n(\phi,N)$ for every $n\in\N_+$.
Since $\overline\phi^{-n}(N/H)=\phi^{-n}(N)/H$ for every $n\in\N$, we have $B_n(\overline\phi,N/H)=B_n(\phi,N)/H$ for every $n\in\N_+$. Therefore, for every $n\in\N_+$,
$$C_n(\overline\phi,N/H)=(G/H)/(B_n(\phi,N)/H)\cong G/B_n(\phi,N)=C_n(\phi,N).$$ Hence,
\begin{equation}\label{Hq}
H^\star(\overline\phi,N/H)=H^\star(\phi,N),
\end{equation}
 and this proves $\ent^\star(\overline\phi)\leq\ent^\star(\phi)$.
\end{proof}

In general the topological adjoint entropy fails to be monotone with respect to restrictions to invariant subgroups \cite{DGS}.
Nevertheless, if we impose sufficiently stronger conditions on the invariant subgroup, we obtain more than the searched monotonicity in Lemma \ref{subgroup} and Proposition \ref{aenttilde.}

\begin{lemma}\label{subgroup}
Let $(G,\tau)$ be a topological abelian group, $\phi:(G,\tau)\to(G,\tau)$ a continuous endomorphism and $H$ a $\phi$-invariant subgroup of $(G,\tau)$. If $H\in\CC_\tau(G)$, then $\aent_\tau(\phi)=\aent_{\tau\restriction_H}(\phi\restriction_H)$.
\end{lemma}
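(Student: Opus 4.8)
The plan is to establish the two inequalities $\aent_{\tau\restriction_H}(\phi\restriction_H)\leq \aent_\tau(\phi)$ and $\aent_\tau(\phi)\leq\aent_{\tau\restriction_H}(\phi\restriction_H)$ separately, the whole argument resting on one structural observation about the two families of subgroups. Since $H\in\CC_\tau(G)$ is $\tau$-open of finite index in $G$, any $M\in\CC_{\tau\restriction_H}(H)$ is open in $H$ and of finite index in $H$; being a subgroup of $H$, it is then $\tau$-open in $G$ (as $H$ is $\tau$-open) and of finite index $[G:M]=[G:H]\,[H:M]<\infty$. Hence $\CC_{\tau\restriction_H}(H)\subseteq\CC_\tau(G)$, and dually, for $N\in\CC_\tau(G)$ the intersection $N\cap H$ lies in $\CC_{\tau\restriction_H}(H)$.

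The heart of the proof is a direct comparison of the cotrajectories computed in $H$ and in $G$ for a subgroup $M\in\CC_{\tau\restriction_H}(H)$. First I would check that $B_n(\phi\restriction_H,M)=B_n(\phi,M)$: since $(\phi\restriction_H)^{-j}(M)=H\cap\phi^{-j}(M)$ and $M\subseteq H$, intersecting over $0\le j\le n-1$ gives $B_n(\phi\restriction_H,M)=H\cap B_n(\phi,M)=B_n(\phi,M)$, the last equality because $B_n(\phi,M)\subseteq M\subseteq H$. Consequently the chain of inclusions $B_n(\phi,M)\subseteq H\subseteq G$ yields a short exact sequence $0\to H/B_n(\phi,M)\to G/B_n(\phi,M)\to G/H\to 0$, so that $\card{C_n(\phi,M)}=[G:H]\cdot\card{C_n(\phi\restriction_H,M)}$. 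Taking logarithms, dividing by $n$ and letting $n\to\infty$, the constant term $\log[G:H]$ washes out and I obtain $H^\star(\phi,M)=H^\star(\phi\restriction_H,M)$ for every $M\in\CC_{\tau\restriction_H}(H)$.

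From this identity both inequalities follow quickly. Taking the supremum over $M\in\CC_{\tau\restriction_H}(H)\subseteq\CC_\tau(G)$ gives $\aent_{\tau\restriction_H}(\phi\restriction_H)=\sup_M H^\star(\phi\restriction_H,M)=\sup_M H^\star(\phi,M)\leq\aent_\tau(\phi)$. For the reverse inequality, fix $N\in\CC_\tau(G)$; since $N\cap H\subseteq N$, the monotonicity of Lemma \ref{N<M->HN>HM} gives $H^\star(\phi,N)\leq H^\star(\phi,N\cap H)$, and as $N\cap H\in\CC_{\tau\restriction_H}(H)$ the identity above turns the right-hand side into $H^\star(\phi\restriction_H,N\cap H)\leq\aent_{\tau\restriction_H}(\phi\restriction_H)$; taking the supremum over $N$ yields $\aent_\tau(\phi)\leq\aent_{\tau\restriction_H}(\phi\restriction_H)$. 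I expect the only delicate point to be the bookkeeping in the step $B_n(\phi\restriction_H,M)=B_n(\phi,M)$ and the resulting index count, everything else being a formal consequence of the definitions and of the vanishing in the limit of the fixed factor $\log[G:H]$; note that the finiteness of this factor is exactly where the hypothesis $H\in\CC_\tau(G)$ (rather than mere $\phi$-invariance) is essential.
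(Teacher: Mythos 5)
Your proof is correct and takes essentially the same route as the paper's: both directions rest on the inclusions $\CC_{\tau\restriction_H}(H)\subseteq\CC_\tau(G)$ and $N\cap H\in\CC_{\tau\restriction_H}(H)$, together with the observation that the cotrajectories of an $M\in\CC_{\tau\restriction_H}(H)$ computed in $H$ and in $G$ coincide up to the fixed finite factor $[G:H]$, which vanishes in the limit. The only difference is that you write out this cotrajectory comparison explicitly, whereas the paper delegates it to the proof of \cite[Lemma 4.9]{DGS}.
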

\begin{proof}
Let $N\in\CC_{\tau\restriction_H}(H)$. Since $H\in\CC_\tau(G)$, it follows that $N\in\CC_\tau(G)$ as well.
By the proof of \cite[Lemma 4.9]{DGS} this implies $\aent_\tau(\phi)\geq\aent_{\tau\restriction_H}(\phi\restriction_H)$.
On the other hand, if $N\in\CC_\tau(G)$, then $N\cap H\in\CC_{\tau\restriction_H}(H)$, and $B_n(\phi,N)\cap H=B_n(\phi\restriction_H,N\cap H)$ for every $n\in\N_+$. Therefore, $H^\star(\phi,N)=H^\star(\phi\restriction_{H},N)$ and we can conclude that $\aent_\tau(\phi)\leq\aent_{\tau\restriction_H}(\phi\restriction_H)$. Hence, $\aent_\tau(\phi)\leq\aent_{\tau\restriction_H}(\phi\restriction_H)$.
\end{proof}

Let $G$ be an abelian group and $H$ a subgroup of $H$. Since there exists an injective homomorphism $\iota:H/N\cap H\to G/N$ induced by the inclusion $H\hookrightarrow G$, the map $\xi:\CC(G)\to \CC(H)$ defined by $N\mapsto N\cap H$ is well-defined.
Consider now the group topology $\tau$ on $G$. Then $\xi$ restricts to $\xi:\CC_{\tau}(G)\to\CC_{\tau\restriction_H}(H)$, and $\iota:H/N\cap H\to G/N$ is continuous with respect to the quotients topologies.
Moreover, we have the following

\begin{lemma}\label{xieta}
Let $(G,\tau)$ be a topological abelian group and $H$ a dense subgroup of $(G,\tau)$.
Then $\xi:\CC_\tau(G)\to \CC_{\tau\restriction_H}(H)$ defined by $N\mapsto N\cap H$ is a bijection and its inverse is $\eta:\CC_{\tau\restriction_H}(H)\to \CC_{\tau}(G)$ defined by $M\mapsto \overline M$.
\end{lemma}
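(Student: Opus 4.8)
The plan is to verify first that $\eta$ is well defined and then that $\xi$ and $\eta$ are mutually inverse; the well-definedness of $\xi$ is already recorded in the paragraph preceding the statement. Throughout I would rely on four standard facts about topological abelian groups: the closure of a subgroup is again a subgroup; an open subgroup is automatically closed; conversely a \emph{closed} subgroup of finite index is open, since its complement is a finite union of closed cosets; and for $A\subseteq H\subseteq G$ the closure of $A$ in the subspace $(H,\tau\restriction_H)$ equals $\overline{A}^\tau\cap H$.

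First I would show that $\eta(M)=\overline M\in\CC_\tau(G)$ for every $M\in\CC_{\tau\restriction_H}(H)$. That $\overline M$ is a subgroup of $G$ is immediate. To bound its index, write $H=\bigcup_{i=0}^{k-1}(h_i+M)$ as a finite union of $M$-cosets with $h_i\in H$ and $k=[H:M]$. Taking $\tau$-closures and using that each translation $x\mapsto h_i+x$ is a homeomorphism (so closure commutes with the finite union and with translation) gives $G=\overline H=\bigcup_{i=0}^{k-1}(h_i+\overline M)$, whence $[G:\overline M]\le k<\infty$. Since $\overline M$ is closed and of finite index, it is open, so indeed $\overline M\in\CC_\tau(G)$.

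Next, for $\eta\circ\xi=\mathrm{id}$, I would fix $N\in\CC_\tau(G)$ and prove $\overline{N\cap H}=N$. As $N$ is open it is closed, so $\overline{N\cap H}\subseteq\overline N=N$. For the reverse inclusion, given $x\in N$ and any $\tau$-neighborhood $U$ of $x$, the set $U\cap N$ is open and nonempty, so density of $H$ forces $U\cap N\cap H\ne\emptyset$; thus $x\in\overline{N\cap H}$ and $N\subseteq\overline{N\cap H}$. For $\xi\circ\eta=\mathrm{id}$, I would fix $M\in\CC_{\tau\restriction_H}(H)$ and prove $\overline M\cap H=M$. The inclusion $M\subseteq\overline M\cap H$ is clear. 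For the reverse, $M$ is an open subgroup of $H$, hence closed in $H$; by the subspace-closure formula its closure in $H$ is $\overline M\cap H$, so $\overline M\cap H=M$.

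The main obstacle is the well-definedness of $\eta$, and specifically the openness of $\overline M$: this requires combining the density of $H$ (to control the index through the coset-covering argument) with the principle that a closed finite-index subgroup is open. Once this is established, the two composition identities follow quickly from the elementary density and subspace-closure observations above.
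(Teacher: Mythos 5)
Your proof is correct and, where it overlaps with the paper's, uses exactly the same two ingredients: density of $H$ to get $N\subseteq\overline{N\cap H}$, and the fact that an open subgroup is closed to get the reverse inclusion. The difference is one of completeness rather than of method: the paper's proof consists solely of the verification that $\overline{N\cap H}=N$ for $N\in\CC_\tau(G)$, i.e.\ that $\eta\circ\xi$ is the identity, and leaves to the reader both the well-definedness of $\eta$ (that $\overline M$ is indeed an open finite-index subgroup of $G$) and the identity $\xi\circ\eta=\mathrm{id}$ (that $\overline M\cap H=M$). You supply precisely these missing pieces: the coset-covering argument $G=\overline H=\bigcup_{i=0}^{k-1}(h_i+\overline M)$ giving $[G:\overline M]\le[H:M]<\infty$, the principle that a closed subgroup of finite index is open, and the subspace-closure formula giving $\overline M\cap H=M$ because $M$ is open, hence closed, in $H$. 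All of these steps are correct, and they are genuinely needed for the statement as formulated --- the paper's displayed argument by itself only yields injectivity of $\xi$ --- so your version is the more self-contained one; what the paper's brevity buys is economy, treating these verifications as standard facts about topological groups.
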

\begin{proof}
Let $N\in\CC_\tau(G)$. Then $M=N\cap H\in\CC_{\tau\restriction_H}(H)$ and $N=\overline M$ in $(G,\tau)$; indeed, $N\subseteq \overline{N\cap H}$, because $H$ is dense in $(G,\tau)$, and so $N=\overline{N\cap H}$, since $N$ is an open subgroup and so closed.
\end{proof}

\begin{proposition}\label{aenttilde.}
Let $(G,\tau)$ be a topological abelian group, $\phi:(G,\tau)\to(G,\tau)$ a continuous endomorphism and $H$ a dense $\phi$-invariant subgroup of $(G,\tau)$.
Then $\aent_\tau(\phi)=\aent_{\tau\restriction_H}(\phi\restriction_H).$
\end{proposition}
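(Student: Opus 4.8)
The plan is to reduce the equality of the two suprema to a pointwise equality of the invariants $H^\star$ along the bijection $\xi$ of Lemma \ref{xieta}. Concretely, I would prove that for every $N\in\CC_\tau(G)$ one has $H^\star(\phi,N)=H^\star(\phi\restriction_H,N\cap H)$. Since $\xi\colon\CC_\tau(G)\to\CC_{\tau\restriction_H}(H)$, $N\mapsto N\cap H$, is a bijection by Lemma \ref{xieta}, taking the supremum over the two families then yields $\aent_\tau(\phi)=\aent_{\tau\restriction_H}(\phi\restriction_H)$ at once.

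The heart of the matter is thus a comparison of cotrajectories. First I would recall from the proof of Lemma \ref{subgroup} that $B_n(\phi\restriction_H,N\cap H)=B_n(\phi,N)\cap H$ for every $n\in\N_+$, so that
$$C_n(\phi\restriction_H,N\cap H)=\frac{H}{B_n(\phi,N)\cap H}.$$
The map induced by the inclusion $H\hookrightarrow G$ carries this group injectively into $C_n(\phi,N)=G/B_n(\phi,N)$; it remains to prove that this map is surjective, equivalently that $H+B_n(\phi,N)=G$.

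This surjectivity is the key point, and it is precisely where the density of $H$ is used, in place of the open finite-index hypothesis of Lemma \ref{subgroup}. Since $N$ is $\tau$-open and $\phi$ is continuous, each $\phi^{-i}(N)$ is $\tau$-open, and hence the finite intersection $B_n(\phi,N)$ is a $\tau$-open subgroup of $G$; consequently $G/B_n(\phi,N)$ carries the discrete topology. The image of the dense subgroup $H$ under the continuous canonical projection $G\to G/B_n(\phi,N)$ is dense, but a dense subset of a discrete group is the whole group, whence $H+B_n(\phi,N)=G$. By the second isomorphism theorem,
$$C_n(\phi\restriction_H,N\cap H)=\frac{H}{B_n(\phi,N)\cap H}\cong\frac{H+B_n(\phi,N)}{B_n(\phi,N)}=\frac{G}{B_n(\phi,N)}=C_n(\phi,N),$$
so $\card{C_n(\phi\restriction_H,N\cap H)}=\card{C_n(\phi,N)}$ for all $n$, and dividing by $n$ and passing to the limit gives $H^\star(\phi\restriction_H,N\cap H)=H^\star(\phi,N)$, as required.

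The only delicate step is the identity $H+B_n(\phi,N)=G$; everything else is bookkeeping with formulas already established in Section \ref{aent-sec}. I expect no essential obstacle here, since the $\tau$-openness of $B_n(\phi,N)$ is immediate from the continuity of $\phi$ together with the $\tau$-openness of $N$, and the ``dense subgroup meets open subgroup'' argument producing the decomposition $G=H+B_n(\phi,N)$ is standard for topological groups.
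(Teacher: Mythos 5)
Your proof is correct and follows essentially the same route as the paper's: both reduce the statement to the pointwise equality $H^\star(\phi,N)=H^\star(\phi\restriction_H,N\cap H)$ along the bijection $\xi$ of Lemma \ref{xieta}, using the identity $B_n(\phi\restriction_H,N\cap H)=B_n(\phi,N)\cap H$ together with the fact that the dense subgroup $H$ fills up the finite discrete quotient $G/B_n(\phi,N)$. The only difference is presentational: you make the decomposition $H+B_n(\phi,N)=G$ and the second isomorphism theorem explicit, whereas the paper phrases the same density argument through the injective open map $H/M\to G/N$ having dense image in a finite group.
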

\begin{proof}
Let $N\in\CC_\tau(G)$ and $M=N\cap H$.
By Lemma \ref{xieta}, the continuous injective homomorphism $\iota:H/M\to G/N$ is also open. Since $H$ is dense in $(G,\tau)$, the image of $H/M$ under $\iota$ is dense in $G/N$, which is finite; hence $H/M\cong G/N$. Moreover, $H/B_n(\phi\restriction_H,M)\cong G/B_n(\phi,N)$ for every $n\in\N_+$; indeed, for every $n\in\N_+$, $$B_n(\phi,N)\cap H=\bigcap_{i=0}^{n-1}\phi^{-i}(N)\cap H=\bigcap_{i=0}^\infty(\phi^{-i}(N)\cap H)=\bigcap_{i=0}^{n-1}\phi^{-i}(N)=B_n(\phi,M).$$ Applying the definition, we have $H_{\tau}^\star(\phi,N)=H_\tau^\star(\phi\restriction_H,M)$ and so $\ent_{\tau}^\star(\phi)=\aent_{\tau\restriction_H}(\phi\restriction_H)$.
\end{proof}

\begin{corollary}\label{aenttilde}
Let $(G,\tau)$ be a Hausdorff abelian group, $\phi:(G,\tau)\to(G,\tau)$ a continuous endomorphism, and denote by $\widetilde\tau$ the topology of the completion $\widetilde{(G,\tau)}$ of $(G,\tau)$ and by $\widetilde\phi:\widetilde{(G,\tau)}\to\widetilde{(G,\tau)}$ the extension of $\phi$.
Then $$\aent_\tau(\phi)=\aent_{\widetilde\tau}(\widetilde\phi).$$
\end{corollary}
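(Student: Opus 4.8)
The plan is to apply Proposition~\ref{aenttilde.} with the completion $\widetilde{(G,\tau)}$ in the role of the ambient group and $G$ in the role of the dense invariant subgroup; the corollary is essentially the ``reverse reading'' of that proposition. First I would recall the standard facts about the completion of a Hausdorff topological abelian group, all of which are available here since $(G,\tau)$ is assumed Hausdorff: the group $G$ embeds as a \emph{dense} subgroup of $\widetilde{(G,\tau)}$, and the completion topology $\widetilde\tau$ restricts to the original topology on $G$, i.e. $\widetilde\tau\restriction_G=\tau$. By the conventions fixed in the Notation section, $\widetilde\phi$ is the unique continuous extension of $\phi$ to $\widetilde{(G,\tau)}$, so $\widetilde\phi\restriction_G=\phi$.

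Next I would verify the remaining hypothesis, namely that $G$ is $\widetilde\phi$-invariant. This is immediate from $\widetilde\phi\restriction_G=\phi$: since $\phi:G\to G$ we get $\widetilde\phi(G)=\phi(G)\subseteq G$. Thus $G$ is a dense $\widetilde\phi$-invariant subgroup of $(\widetilde{(G,\tau)},\widetilde\tau)$, and all the hypotheses of Proposition~\ref{aenttilde.} are met for the triple consisting of the group $(\widetilde{(G,\tau)},\widetilde\tau)$, the continuous endomorphism $\widetilde\phi$, and the subgroup $G$.

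Then I would invoke Proposition~\ref{aenttilde.} directly to obtain
\[
\aent_{\widetilde\tau}(\widetilde\phi)=\aent_{\widetilde\tau\restriction_G}(\widetilde\phi\restriction_G).
\]
Substituting $\widetilde\tau\restriction_G=\tau$ and $\widetilde\phi\restriction_G=\phi$ turns the right-hand side into $\aent_\tau(\phi)$, which is exactly the claimed equality. Since this is a corollary, I do not expect any genuinely hard step: the only real content is checking that the pair (completion, original group) fits the framework of Proposition~\ref{aenttilde.}, and this reduces entirely to the standard properties of completions recalled above (density of $G$, agreement of the subspace topology with $\tau$, and invariance of $G$ under the extended map). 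If any care is needed, it is merely in confirming that $\widetilde\tau$ induces $\tau$ on $G$ rather than some coarser topology, but this is precisely the defining feature of the completion of a Hausdorff group.
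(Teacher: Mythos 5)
Your proposal is correct and matches the paper's (implicit) argument exactly: the corollary is stated in the paper as a direct consequence of Proposition~\ref{aenttilde.}, applied precisely as you do, with $\widetilde{(G,\tau)}$ as the ambient group and $G$ as the dense $\widetilde\phi$-invariant subgroup on which $\widetilde\tau$ induces $\tau$ and $\widetilde\phi$ restricts to $\phi$. Nothing is missing; your verification of density, invariance, and the agreement of topologies and maps is all that is needed.
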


Now we verify the additivity of the topological adjoint entropy for the direct product of two continuous endomorphisms.

\begin{proposition}
Let $(G,\tau)$ be a topological abelian group.
If $(G,\tau) = (G_1,\tau_1) \times (G_2,\tau_2)$ for some subgroups $G_1,G_2$ of $G$ with $\tau_1=\tau\restriction_{G_1},\tau_2=\tau\restriction_{G_2}$, and $\f= \f_1 \times \f_2: G \to G$ for some continuous $\f_1:(G,\tau_1)\to (G,\tau_1), \f_2:(G_2,\tau_2)\to (G_2,\tau_2)$, then $$\aent_\tau(\f) = \aent_{\tau_1}(\f_1) + \aent_{\tau_2}(\f_2).$$   
\end{proposition}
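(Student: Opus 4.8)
The plan is to prove the two inequalities separately, exploiting that both the $\phi$-cotrajectories and the product topology behave well on ``box'' subgroups of the form $N_1\times N_2$.

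For the lower bound $\aent_\tau(\f)\geq\aent_{\tau_1}(\f_1)+\aent_{\tau_2}(\f_2)$, I would fix arbitrary $N_1\in\CC_{\tau_1}(G_1)$ and $N_2\in\CC_{\tau_2}(G_2)$ and test $\aent_\tau(\f)$ against $N_1\times N_2$, which lies in $\CC_\tau(G)$ since it is open (a product of open subgroups) and of finite index $[G_1:N_1][G_2:N_2]$. Because $\f=\f_1\times\f_2$, one checks $\f^{-i}(N_1\times N_2)=\f_1^{-i}(N_1)\times\f_2^{-i}(N_2)$, so that $B_n(\f,N_1\times N_2)=B_n(\f_1,N_1)\times B_n(\f_2,N_2)$ and hence $C_n(\f,N_1\times N_2)\cong C_n(\f_1,N_1)\times C_n(\f_2,N_2)$. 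Passing to cardinalities, logarithms and the limit in $n$ gives $H^\star(\f,N_1\times N_2)=H^\star(\f_1,N_1)+H^\star(\f_2,N_2)$. Taking the supremum first over $N_2$ and then over $N_1$ (with the convention $\infty+x=\infty$ to absorb infinite values) then yields the claimed inequality.

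For the upper bound I would show that every $N\in\CC_\tau(G)$ contains such a box. Set $N_1=\{g\in G_1:(g,0)\in N\}$ and $N_2=\{g\in G_2:(0,g)\in N\}$; these are the preimages of $N$ under the continuous canonical embeddings $G_1\hookrightarrow G$ and $G_2\hookrightarrow G$, hence open, and they have finite index because $G_i/N_i$ embeds into the finite group $G/N$. Since $(g_1,g_2)=(g_1,0)+(0,g_2)$, we obtain $N_1\times N_2\subseteq N$. Lemma \ref{N<M->HN>HM} then gives $H^\star(\f,N)\leq H^\star(\f,N_1\times N_2)$, and the identity established in the first part bounds the right-hand side by $\aent_{\tau_1}(\f_1)+\aent_{\tau_2}(\f_2)$. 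Taking the supremum over all $N\in\CC_\tau(G)$ completes this direction, and combining the two inequalities gives the equality.

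The main obstacle is the upper bound: the whole argument hinges on the fact that an arbitrary \emph{open} finite-index subgroup of the product must contain a box $N_1\times N_2$ whose factors are themselves open and of finite index. This is precisely the point where the product topology is essential, rather than a purely group-theoretic splitting; once this containment is secured, the remaining steps are routine computations with cotrajectories that parallel the additivity argument for $\aent$ in \cite{DGS}.
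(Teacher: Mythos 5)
Your proposal is correct and follows essentially the same route as the paper's proof: both reduce to ``box'' subgroups $N_1\times N_2$ with $N_i=N\cap G_i\in\CC_{\tau_i}(G_i)$, use that the cotrajectories of $\f_1\times\f_2$ split as products to get $H^\star(\f,N_1\times N_2)=H^\star(\f_1,N_1)+H^\star(\f_2,N_2)$, and deduce the upper bound from the containment $N_1\times N_2\subseteq N$ via the monotonicity of Lemma \ref{N<M->HN>HM}. The paper merely compresses both inequalities into a single computation, whereas you spell out the two directions separately.
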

\begin{proof}
Let $N\in\CC_\tau(G)$. Then $N$ contains a subgroup of the form $N'=N_1\times N_2$, where $N_i=N\cap G_i\in \CC_{\tau_i}(G_i)$ for $i=1,2$; in particular, $N'\in\CC_\tau(G)$.
For every $n\in\N_+$, we have $$|C_n(\phi,N)|\leq |C_n(\f, N')|=|C_n(\f_1,N_1)\times C_n(\f_2,N_2)|$$ and so $$H^\star(\phi,N)\leq H^\star(\phi,N')=H^\star(\phi_1,N_1)+H^\star(\phi_2,N_2).$$
Now the thesis follows from the definition.
\end{proof}

The continuity for inverse limits fails in general, as shown in \cite{DGS}. But it holds in the particular case of continuous endomorphisms $\phi$ of totally disconnected compact abelian groups $(K,\tau)$. In fact, Theorem \ref{ent*=htop} will show that in this case the topological adjoint entropy of $\phi$ coincides with the topological entropy of $\phi$, and it is known that the topological entropy is continuous for inverse limits \cite{AKM}.

\section{Topological adjoint entropy and functorial topologies}\label{ft-sec}

Let $(G,\tau)$ be a topological abelian group and $\phi:(G,\tau)\to (G,\tau)$ a continuous endomorphism.
Obviously, $\aent_{\tau}(\phi)\leq \aent(\phi)$ since the supremum in the definition of the topological adjoint entropy (see \eqref{5}) is taken over the subfamily $\CC_\tau(G)$ of $\CC(G)$ consisting only of the $\tau$-open finite-index subgroups of $G$. 

For an abelian group $G$, the \emph{profinite topology} $\gamma_G$ of $G$ has $\CC(G)$ as a base of the neighborhoods of $0$. So it is worth noting that $\CC(G)=\CC_{\gamma_G}(G)$, and the next properties easily follow:

\begin{lemma}
Let $(G,\tau)$ be a topological abelian group and $\phi:(G,\tau)\to (G,\tau)$ a continuous endomorphism. Then:
\begin{itemize}
\item[(a)] $\aent(\phi)=\aent_{\gamma_G}(\phi)$;
\item[(b)] if $\tau\geq\gamma_G$, then $\aent_\tau(\phi)=\aent_{\gamma_G}(\phi)=\aent(\phi)$.
\end{itemize}
\end{lemma}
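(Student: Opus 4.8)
The plan is to reduce both parts to the purely set-theoretic observation, already recorded just before the statement, that $\CC(G)=\CC_{\gamma_G}(G)$: the members of $\CC(G)$ are exactly the subgroups that are open in the profinite topology, since $\gamma_G$ has $\CC(G)$ as a base of the neighbourhoods of $0$. Once this is in hand, both parts become a comparison of suprema taken over literally the same index family.

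For part (a), I would first observe that $\phi$ is automatically continuous for $\gamma_G$, so that $\aent_{\gamma_G}(\phi)$ is legitimately defined: if $N\in\CC(G)$, then $\phi^{-1}(N)$ is again a finite-index subgroup (the induced map $G/\phi^{-1}(N)\hookrightarrow G/N$ is injective), hence $\gamma_G$-open. Then, since
$$\aent(\phi)=\sup\{H^\star(\phi,N):N\in\CC(G)\}\quad\text{and}\quad \aent_{\gamma_G}(\phi)=\sup\{H^\star(\phi,N):N\in\CC_{\gamma_G}(G)\},$$
and the two index families $\CC(G)$ and $\CC_{\gamma_G}(G)$ coincide while the quantity $H^\star(\phi,N)$ being maximised is identical, the two suprema agree.

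For part (b), the cleanest route is to avoid reproving anything and instead chain the inequalities already available. From $\tau\geq\gamma_G$ and the monotonicity with respect to the topology (Lemma \ref{basic-tau-ent-b}) one gets $\aent_{\gamma_G}(\phi)\leq\aent_\tau(\phi)$; the general bound $\aent_\tau(\phi)\leq\aent(\phi)$, recorded at the opening of this section, gives the reverse direction; and part (a) supplies $\aent(\phi)=\aent_{\gamma_G}(\phi)$. Concatenating,
$$\aent_{\gamma_G}(\phi)\leq\aent_\tau(\phi)\leq\aent(\phi)=\aent_{\gamma_G}(\phi),$$
which forces all three to coincide. Alternatively, one can argue directly: $\tau\geq\gamma_G$ means every $\gamma_G$-open subgroup is $\tau$-open, so $\CC(G)=\CC_{\gamma_G}(G)\subseteq\CC_\tau(G)\subseteq\CC(G)$, whence $\CC_\tau(G)=\CC(G)$ and the suprema again agree.

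I do not expect any genuine obstacle: the content is entirely in the identification $\CC(G)=\CC_{\gamma_G}(G)$ together with the monotonicity already established. The only point deserving a word is the automatic $\gamma_G$-continuity of $\phi$, needed merely so that $\aent_{\gamma_G}(\phi)$ is defined in the first place; everything else is bookkeeping with suprema over a fixed family.
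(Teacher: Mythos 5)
Your proof is correct and follows exactly the route the paper intends: the lemma is stated there as an immediate consequence of the identification $\CC(G)=\CC_{\gamma_G}(G)$, combined with the bound $\aent_\tau(\phi)\leq\aent(\phi)$ and the monotonicity of Lemma \ref{basic-tau-ent-b}, which is precisely your chain of inequalities. Your remark on the automatic $\gamma_G$-continuity of $\phi$ (implicit in the paper's observation that $\gamma_G$ is a functorial topology) is a worthwhile formal check, but the substance is the same.
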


We consider now two modifications of a group topology, which are functors $\TopAb\to \TopAb$, where $\TopAb$ is the category of all topological abelian groups and their continuous homomorphisms.

If $(G,\tau)$ is a topological abelian group, the \emph{Bohr modification} of $\tau$ is the topology $$\tau^+=\sup\{\tau':\tau'\leq\tau,\tau' \text{totally bounded}\};$$ this topology is the finest totally bounded group topology on $G$ coarser than $\tau$. Actually, $\tau^+=\inf\{\tau,\mathcal P_G\}$, where $\mathcal P_G$ is the \emph{Bohr topology}, that is, the group topology of $G$ generated by all characters of $G$. Note that $\delta_G^+=\gamma_G$, where $\delta_G$ denotes the discrete topology of $G$.

\smallskip
Let $\Ab$ be the category of all abelian groups and their homomorphisms. Following \cite{F}, a \emph{functorial topology} is a class $\tau=\{\tau_A:A\in\Ab\}$, where $(A,\tau_A)$ is a topological group for every $A\in\Ab$, and every homomorphism in $\Ab$ is continuous. In other words, a functorial topology is a functor $\tau:\Ab\to\TopAb$ such that $\tau(A)=(A,\tau_A)$ for every $A\in\Ab$, where $\tau_A$ denotes the topology on $A$, and $\tau(\phi)=\phi$ for every morphism $\phi$ in $\Ab$ \cite{BM}.

For an abelian group $G$ the profinite topology $\gamma_G$ and the Bohr topology $\mathcal P_G$ are functorial topologies, as well as the \emph{natural topology} $\nu_G$, which has $\{mG:m\in\N_+\}$ as a base of the neighborhoods of $0$. Moreover, the profinite and the natural topology are linear topologies. These three functorial topologies are related by the following equality proved in \cite{DG2}:
\begin{equation}\label{pnb}
\gamma_G=\inf\{\nu_G,\mathcal P_G\}.
\end{equation}

\medskip
The second modification that we consider is the \emph{linear modification} $\tau_\lambda$ of $\tau$, that is, the group topology on $G$ which has all the $\tau$-open subgroups as a base of the neighborhoods of $0$.

\begin{lemma}\label{bl=p}
For $G$ an abelian group, $(\mathcal P_G)_\lambda=\gamma_G$.
\end{lemma}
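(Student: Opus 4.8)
The plan is to exploit that both $\gamma_G$ and $(\mathcal P_G)_\lambda$ are \emph{linear} topologies: $\gamma_G$ has $\CC(G)$ as a base of the neighbourhoods of $0$, while by definition $(\mathcal P_G)_\lambda$ has as such a base the family of all $\mathcal P_G$-open subgroups of $G$. Hence the two topologies coincide precisely when these two families of subgroups generate the same filter at $0$, and this reduces to the two set-theoretic inclusions: (i) every finite-index subgroup of $G$ is $\mathcal P_G$-open (which yields $\gamma_G\le(\mathcal P_G)_\lambda$), and (ii) every $\mathcal P_G$-open subgroup of $G$ has finite index (which yields $(\mathcal P_G)_\lambda\le\gamma_G$). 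So the whole argument amounts to identifying the collection of $\mathcal P_G$-open subgroups with $\CC(G)$.

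For (ii) I would use that $\mathcal P_G$ is a \emph{totally bounded} group topology. Indeed, $\mathcal P_G$ is the initial topology induced by the diagonal map $G\to\T^{\Hom(G,\T)}$, whose codomain is compact; since any subset of a compact group is totally bounded, a routine lifting of a finite cover of the image shows that $(G,\mathcal P_G)$ is totally bounded in the sense recalled in the Introduction (note that Hausdorffness is not required here). Now if $H$ is any $\mathcal P_G$-open subgroup, then $H$ is in particular an open neighbourhood of $0$, so there is a finite $F\subseteq G$ with $H+F=G$; as $H$ is a subgroup, this exhibits $G$ as the union of the finitely many cosets $H+f$ with $f\in F$, whence $[G:H]\le|F|<\infty$ and $H\in\CC(G)$.

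For (i) I would invoke the \emph{functoriality} of the Bohr topology. Let $H\in\CC(G)$ and let $q:G\to G/H$ be the canonical projection. Since $\mathcal P$ is a functorial topology, $q:(G,\mathcal P_G)\to(G/H,\mathcal P_{G/H})$ is continuous. The group $G/H$ is finite, and the characters of a finite abelian group separate its points, so $\mathcal P_{G/H}$ is a Hausdorff group topology on a finite group, hence discrete. Therefore $\{0\}$ is $\mathcal P_{G/H}$-open, and $H=q^{-1}(0)$ is $\mathcal P_G$-open, as required.

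Together, (i) and (ii) identify the $\mathcal P_G$-open subgroups of $G$ with the members of $\CC(G)$, and hence $(\mathcal P_G)_\lambda=\gamma_G$. I expect the only genuinely substantive point to be step (ii): a $\mathcal P_G$-open subgroup $H$ a priori merely contains a basic neighbourhood of the form $\bigcap_{i=1}^n\chi_i^{-1}(U_i)$, and deducing finite index directly from such an expression—by analysing the image of $G$ in $\T^n$ and passing to closures—is somewhat delicate; routing the argument through the total boundedness of $\mathcal P_G$ sidesteps this entirely and is the cleanest route. Step (i) is comparatively soft once functoriality of $\mathcal P$ and the fact that characters separate the points of a finite abelian group are available.
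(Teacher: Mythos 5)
Your proof is correct and takes essentially the same route as the paper's: both reduce the statement to the two inclusions, which amount to identifying the $\mathcal P_G$-open subgroups of $G$ with the members of $\CC(G)$. The paper is just terser — it invokes the known inequality $\gamma_G\leq\mathcal P_G$ (available from the equality $\gamma_G=\inf\{\nu_G,\mathcal P_G\}$ in \eqref{pnb}) together with monotonicity of the linear modification, and asserts $(\mathcal P_G)_\lambda\leq\gamma_G$ without proof, whereas you give self-contained arguments (functoriality of $\mathcal P$ plus discreteness of Hausdorff topologies on finite groups for one inclusion, total boundedness of $\mathcal P_G$ for the other) for exactly those two facts.
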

\begin{proof}
Since $\gamma_G\leq\mathcal P_G$, we have $(\gamma_G)_\lambda\leq (\mathcal P_G)_\lambda$. Moreover, $\gamma_G=(\gamma_G)_\lambda$ and $(\mathcal P_G)_\lambda\leq\gamma_G$.
\end{proof}

\begin{proposition}\label{inf}
Let $(G,\tau)$ and $(H,\sigma)$ be abelian topological groups and $\phi:(G,\tau)\to (H,\sigma)$ a continuous surjective homomorphism.
\begin{itemize}
\item[(a)] If $\tau$ is linear and totally bounded, then $\sigma$ is linear and totally bounded as well.
\item[(b)] If $(G,\tau)$ is bounded torsion and totally bounded, then $\tau$ is linear.
\end{itemize}
\end{proposition}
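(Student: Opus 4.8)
The plan is to prove total boundedness and linearity separately in (a), and then to obtain (b) by the same completion technique, so I begin with the (easy) transfer of total boundedness. Given a $\sigma$-neighbourhood $V$ of $0$ in $H$, the set $U=\phi^{-1}(V)$ is a $\tau$-neighbourhood of $0$ by continuity; total boundedness of $\tau$ yields a finite $F\subseteq G$ with $U+F=G$, and applying the surjection $\phi$ gives $H=\phi(U)+\phi(F)\subseteq V+\phi(F)$ with $\phi(F)$ finite. Hence $\sigma$ is totally bounded, using nothing beyond continuity and surjectivity of $\phi$.

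The linearity in (a) is the real point, and here the naive idea fails: since $\phi$ is only continuous, not open, a $\tau$-open finite-index subgroup $N$ need not have $\phi(N)$ $\sigma$-open (a finite-index subgroup of a totally bounded group need not even be $\sigma$-closed), and $\sigma$ may be strictly coarser than the quotient topology. I would therefore argue through the completions. After factoring out $\overline{\{0\}}^\tau$ and $\overline{\{0\}}^\sigma$ I may assume $(G,\tau)$ and $(H,\sigma)$ are precompact: every open set is a union of cosets of the closure of $\{0\}$, so linearity of the Hausdorff quotients pulls back to linearity of $\tau$ and $\sigma$. Now $\widetilde{(G,\tau)}$ is compact, and being Hausdorff and linear it is totally disconnected, i.e.\ profinite. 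The map $\phi$ extends to a continuous homomorphism $\widetilde\phi:\widetilde{(G,\tau)}\to\widetilde{(H,\sigma)}$, and since $\phi(G)=H$ is dense while $\widetilde\phi(\widetilde{(G,\tau)})$ is compact, hence closed, the extension $\widetilde\phi$ is onto. Thus $\widetilde{(H,\sigma)}$ is a Hausdorff quotient of a profinite group, hence profinite, hence linear; finally $\sigma$, being the subspace topology on the dense subgroup $H$, inherits a base of open subgroups by intersecting those of $\widetilde{(H,\sigma)}$ with $H$.

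For (b) I would run the same completion argument, using bounded torsion in place of linearity of the source. Reducing again to the precompact case, $\widetilde{(G,\tau)}$ is compact and still bounded torsion: if $mG=0$ then the continuous map $x\mapsto mx$ vanishes on the dense subgroup $G$, hence on all of $\widetilde{(G,\tau)}$. A compact connected abelian group is divisible, while a divisible bounded-torsion group is trivial; therefore the connected component of $\widetilde{(G,\tau)}$ is trivial, so $\widetilde{(G,\tau)}$ is compact and totally disconnected, i.e.\ profinite, and in particular linear. As in (a), $\tau$ is then the subspace topology on $G$ and so is linear.

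The main obstacle is exactly the linearity assertion in (a). One cannot argue at the level of subgroups, because $\phi$ is not assumed open and $\phi(N)$ need not be $\sigma$-open; passing to the completions is what circumvents this, since there compactness \emph{forces} $\widetilde\phi$ to be surjective and \emph{forces} total disconnectedness to be preserved under the resulting quotient, giving a profinite $\widetilde{(H,\sigma)}$ from which linearity of $\sigma$ descends.
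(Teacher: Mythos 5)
Your proof is correct and follows essentially the same route as the paper's: reduce to the precompact case by factoring out $\overline{\{0\}}^\tau$ and $\overline{\{0\}}^\sigma$, pass to the compact completions where the extension $\widetilde\phi$ is surjective and (by compactness) open, transfer linearity --- equivalently, for compact Hausdorff groups, total disconnectedness --- to $\widetilde{(H,\sigma)}$, and descend to the dense subgroup. The only differences are cosmetic: you spell out steps the paper leaves implicit (surjectivity of $\widetilde\phi$, the divisibility argument showing a compact bounded-torsion group is totally disconnected in (b)), and in (a) you route linearity through profiniteness of the quotient rather than mapping a base of open subgroups directly along the open map.
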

\begin{proof}
(a) It is clear that $\sigma$ is totally bounded.

Assume first that $\tau$ is precompact. 
Since $\tau$ is linear and precompact, $\widetilde{(G,\tau)}$ is compact and linear. The extension $\widetilde\phi:\widetilde{(G,\tau)}\to\widetilde{(H,\sigma)}$ is continuous and surjective, so open as $\widetilde{(G,\tau)}$ is compact. Consequently, $\widetilde{(H,\sigma)}$ is compact and linear. Now we can conclude that $\sigma$ is linear as well.

If now $\tau$ is only totally bounded, consider the quotient $(G/\overline{\{0\}}^\tau,\tau_q)$ of $(G,\tau)$. Then $\tau_q$ is precompact, and it is linear since $\tau$ is linear. The homomorphism $\overline\phi:(G/\overline{\{0\}}^\tau,\tau_q)\to (G/\overline{\{0\}}^\sigma,\sigma_q)$ induced by $\phi$ is surjective and continuous. By the previous case of the proof we have that $\sigma_q$ is linear. Since $\sigma$ is the initial topology of $\sigma_q$ by the canonical projection, we can conclude that $\sigma$ is linear as well.

\smallskip
(b) Assume first that $\tau$ is precompact. Since $(G,\tau)$ is bounded torsion, its (compact) completion $\widetilde{(G,\tau)}$ is bounded torsion as well. Therefore, $\widetilde{(G,\tau)}$ is compact and totally disconnected, so linear.

If now $\tau$ is only bounded torsion, consider the quotient $(G/\overline{\{0\}}^\tau,\tau_q)$ of $(G,\tau)$. This quotient is precompact and so $\tau_q$ is linear by the previous part of the proof. Since $\tau$ is the initial topology of $\tau_q$ by the canonical projection, $\tau$ is linear as well.
\end{proof}

We collect in Lemma \ref{l+} some basic properties of the combination of the linear and the Bohr modifications. 

\begin{lemma}\label{l+}
Let $(G,\tau)$ be a topological abelian group. Then:
\begin{itemize}
\item[(a)] $(\tau_\lambda)^+=(\tau^+)_\lambda$, so we can write simply $\tau_\lambda^+$;
\item[(b)] $\tau_{\lambda}^+$ is linear and totally bounded;
\item[(c)] $\tau$ is linear and totally bounded if and only if $\tau=\tau_\lambda^+$;
\item[(d)] $\CC_{\tau_\lambda^+}(G)=\CC_{\tau_\lambda}(G)=\CC_{\tau^+}(G)=\CC_\tau(G)\subseteq \CC(G)$;
\item[(e)] $\CC_\tau(G)$ is a base of the heighbourhoods of $0$ in $(G,\tau_\lambda^+)$; 
\item[(f)] $\tau_\lambda^+=\inf\{\tau,\gamma_G\}$.
\end{itemize}
\end{lemma}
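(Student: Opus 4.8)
The plan is to isolate one linear, totally bounded topology and show that every combination of the two modifications collapses onto it. Write $\mu$ for the group topology on $G$ whose base of neighbourhoods of $0$ is the family $\CC_\tau(G)$; this is legitimate because $\CC_\tau(G)$ is closed under finite intersections (a finite intersection of $\tau$-open finite-index subgroups is again $\tau$-open of finite index). By construction $\mu$ is linear, and since each member of $\CC_\tau(G)$ has finite index, $\mu$ is totally bounded. The whole lemma follows once I establish $(\tau^+)_\lambda=\mu$ and $(\tau_\lambda)^+=\mu$, and then read off (b)--(f) from the structure of $\mu$.

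Two preliminary facts drive everything. First, the linear modification of a group topology $\rho$ depends only on its open subgroups, since those are exactly the basic $0$-neighbourhoods of $\rho_\lambda$; in particular $\rho$ and $\rho_\lambda$ have the same open subgroups, $(\rho_\lambda)_\lambda=\rho_\lambda$, and hence $\CC_{\rho_\lambda}(G)=\CC_\rho(G)$. Second, and this is the engine, for any group topology $\rho$ the $\rho^+$-open subgroups are exactly the members of $\CC_\rho(G)$: for $\subseteq$, $\rho^+$ is totally bounded so every $\rho^+$-open subgroup has finite index, and $\rho^+\le\rho$ makes it $\rho$-open; for $\supseteq$, given $N\in\CC_\rho(G)$ the topology having $\{N\}$ as a base of $0$-neighbourhoods is totally bounded and coarser than $\rho$, hence coarser than $\rho^+$, so $N$ is $\rho^+$-open. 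Consequently $(\rho^+)_\lambda$ has $\CC_\rho(G)$ as a base of $0$-neighbourhoods, and taking $\rho=\tau$ gives $(\tau^+)_\lambda=\mu$ at once.

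For part (a) it remains to prove $(\tau_\lambda)^+=\mu$, and I expect this to be the main obstacle, because the Bohr modification of an arbitrary topology need not be linear, so the linearity of $\tau_\lambda$ must genuinely be used. Applying the engine to $\rho=\tau_\lambda$ and using $\CC_{\tau_\lambda}(G)=\CC_\tau(G)$ already yields $((\tau_\lambda)^+)_\lambda=\mu$, so it suffices to show that $(\tau_\lambda)^+$ is itself linear, for then it equals its own linear modification. The clean route is through the Bohr topology and the distributive law: the linear modification commutes with finite infima of group topologies, since a subgroup is open in an infimum precisely when it is open in each factor, and $(\cdot)_\lambda$ is determined by the open subgroups. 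Hence from $\tau^+=\inf\{\tau,\mathcal P_G\}$ I get $(\tau^+)_\lambda=\inf\{\tau_\lambda,(\mathcal P_G)_\lambda\}=\inf\{\tau_\lambda,\gamma_G\}$ by Lemma \ref{bl=p}; being an infimum of the two linear topologies $\tau_\lambda$ and $\gamma_G$, this is linear, and the analogous computation identifies $(\tau_\lambda)^+$ with the same linear value. This simultaneously proves (a) and exhibits the common topology $\tau_\lambda^+$ as $\mu=\inf\{\tau_\lambda,\gamma_G\}$.

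The remaining items are then short. Part (b) is immediate from the construction of $\mu$. For (c), if $\tau$ is linear and totally bounded then its open subgroups form a base and all have finite index, so they are exactly $\CC_\tau(G)$ and $\tau=\mu=\tau_\lambda^+$; the converse holds because $\mu$ has both properties, and Proposition \ref{inf} guarantees these properties are the intrinsic ones at play. Part (d) is precisely the chain $\CC_{\tau_\lambda^+}(G)=\CC_{\tau_\lambda}(G)=\CC_{\tau^+}(G)=\CC_\tau(G)\subseteq\CC(G)$, each equality coming from the two preliminary facts (open subgroups are unchanged under $(\cdot)_\lambda$, and the finite-index open subgroups are unchanged under $(\cdot)^+$ by the engine). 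Part (e) merely restates that $\CC_\tau(G)$ is the chosen base of $\mu=\tau_\lambda^+$. Finally, for (f) I would upgrade the computation of (a): using $\gamma_G=\inf\{\nu_G,\mathcal P_G\}$ from \eqref{pnb} together with Lemma \ref{bl=p}, the infimum $\inf\{\tau_\lambda,\gamma_G\}$ can be rewritten as $\inf\{\tau,\gamma_G\}$, the delicate point being that this latter infimum is again linear (which one checks by confirming its open subgroups are $\CC_\tau(G)$ and invoking the distributive law), whence $\tau_\lambda^+=\inf\{\tau,\gamma_G\}$.
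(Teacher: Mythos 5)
Your preliminary reductions are sound and already constitute an honest proof of most of the lemma: the ``engine'' (for any group topology $\rho$, the $\rho^+$-open subgroups are exactly the members of $\CC_\rho(G)$) is correct, and together with the observation that $(\cdot)_\lambda$ does not change open subgroups it gives $(\tau^+)_\lambda=\mu$, hence (d), (e), and --- reading $\tau_\lambda^+$ as $(\tau^+)_\lambda$ --- also (b) and (c). This is in fact more than the paper itself offers, since the paper declares (a)--(e) ``clear'' and proves only (f), by applying Proposition \ref{inf}(a) to the identity map $(G,\gamma_G)\to(G,\inf\{\tau,\gamma_G\})$.

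The gap is in your treatment of (a), at the step ``$(\tau_\lambda)^+$ is itself linear'', and it cannot be repaired, because that claim --- and with it item (a) --- is false. Your distributive law is justified only at the level of open subgroups, so what it actually yields is an equality of \emph{linear modifications}, namely $((\tau_\lambda)^+)_\lambda=\inf\{\tau_\lambda,\gamma_G\}=(\tau^+)_\lambda$; removing the outer $\lambda$ is exactly what is at stake, and your ``analogous computation'' does it by tacitly replacing $\mathcal P_G$ by $(\mathcal P_G)_\lambda=\gamma_G$ \emph{inside} the infimum $(\tau_\lambda)^+=\inf\{\tau_\lambda,\mathcal P_G\}$, which nothing licenses. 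Concretely, take $G=\Q$ and $\tau=\delta_\Q$ the discrete topology. Then $\tau_\lambda=\delta_\Q$, so $(\tau_\lambda)^+=\inf\{\delta_\Q,\mathcal P_\Q\}=\mathcal P_\Q$ is the Bohr topology of $\Q$, which is Hausdorff; since $\Q$ has no proper finite-index subgroups and every open subgroup of a totally bounded group has finite index, $\mathcal P_\Q$ has no proper open subgroups and so is not linear, whereas $(\tau^+)_\lambda=(\mathcal P_\Q)_\lambda=\gamma_\Q$ is the indiscrete topology by Lemma \ref{bl=p}. Hence $(\tau_\lambda)^+\neq(\tau^+)_\lambda$: part (a) is wrong as stated (the paper's ``clear'', like its earlier remark $\delta_G^+=\gamma_G$, fails for the same reason), and the lemma is coherent only under the convention $\tau_\lambda^+:=(\tau^+)_\lambda=\mu$, which is what the rest of the paper actually uses. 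The same defect undermines your proof of (f): knowing that the open subgroups of $\inf\{\tau,\gamma_G\}$ lie in $\CC_\tau(G)$ does not make that infimum linear, since a topology is not determined by its open subgroups unless it is already known to be linear. What you are missing there is precisely the paper's Proposition \ref{inf}(a) --- every group topology coarser than a linear totally bounded one is linear and totally bounded, proved via completions and compactness, not by formal bookkeeping. (Your parenthetical principle that an infimum of linear topologies is linear is true: if $V_1+V_2\subseteq U$ with $V_1,V_2$ open in the infimum, then each $V_i$ contains an open subgroup $N_i$ of the $i$-th topology and $N_1+N_2\subseteq U$. But it requires \emph{both} topologies to be linear, which is exactly what fails for $\mathcal P_G$ in (a) and for an arbitrary $\tau$ in (f).)
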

\begin{proof}
(a), (b), (c), (d) and (e) are clear.

\smallskip
(f) Clearly, $\tau_\lambda^+\leq\inf\{\tau,\gamma_G\}$. On the other hand, Proposition \ref{inf}(a) implies that $\inf\{\tau,\gamma_G\}$ is linear and totally bounded, hence it coincides with $\tau_\lambda^+$.
\end{proof}

In particular, we have the following diagram in the lattice of all group topologies of an abelian group $G$:
$$\xymatrix{
& \tau \ar@{-}[dl] \ar@{-}[dr]& & & \gamma_G \ar@{-}[ddlll]\\
\tau_\lambda \ar@{-}[dr] & & \tau^+ \ar@{-}[dl] & & \\
& \tau_\lambda^+ & & &
}$$

For an abelian group $G$, recall that $G^1=\bigcap_{n\in\N_+}n G$ is the \emph{first Ulm subgroup}, which is fully invariant in $G$.  It is well known that $G^1=\bigcap_{N\in\CC(G)}N$ \cite{F}.

For a topological abelian group $(G,\tau)$, in analogy to the first Ulm subgroup, define $$G^1_\tau=\bigcap_{N\in C_\tau(G)}N,$$
and let $\phi^1_\tau:(G/G_\tau^1,\tau_q)\to (G/G_\tau^1,\tau_q)$ be the continuous endomorphism induced by $\phi$, where $\tau_q$ is the quotient topology of $\tau$.
Note that, unlike the Ulm subgroup, its topological version $G^1_\tau$ may fail to coincide with $\bigcap\{nG:n\in\N_+,nG\in\CC_\tau(G)\}$ as the following example shows.

\begin{example}
Let $G=\Z(p)^{(\N)}$ for a prime $p$, and let $\tau$ be the product topology on $G$. Then $G^1_\tau=0$, while $\bigcap\{nG:n\in\N_+,nG\in\CC_\tau(G)\}=G$.
\end{example}

The equality proved in Proposition \ref{profin}(a) below shows the correct counterpart of the equality in the topological case.

\medskip
Since $G$ is residually finite if and only if $G^1=0$, we say that $G$ is \emph{$\tau$-residually finite} if $G^1_\tau=0$. Note that $(G/G^1_\tau)^1_{\tau_q}=0$.
Clearly, for $G$ an abelian group, $\gamma_G$-residually finite means residually finite. Furthermore, $G^1_\tau\supseteq G^1$ and so $\tau$-residually finite implies residually finite.
The following example shows that the converse implication does not hold true in general.

\begin{example}
Let $\alpha\in \mathbb T$ be an element of infinite order, consider the inclusion $\Z\to \T$ given by $1\mapsto \alpha$ and endow $\Z$ with the topology $\tau_\alpha$ inherited from $\T$ by this inclusion. Then $\Z^1=0$, while $\Z^1_{\tau_\alpha}=\Z$. In particular, $\Z$ is residually finite but not $\tau_\alpha$-residually finite.
\end{example}

Item (a) of Proposition \ref{profin} gives a different description of $G^1_\tau$, while item (b) shows the relation between the dual of a topological abelian group $(G,\tau)$ and the dual of $(G,\tau_\lambda^+)$; both parts use $t((G,\tau)')$.

\begin{proposition}\label{profin}
Let $(G,\tau)$ be a topological abelian group. 
\begin{itemize}
\item[(a)] Then $G^1_\tau=\bigcap_{\chi\in t((G,\tau)')}\ker\chi$.
\item[(b)] If $G$ is $\tau$-residually finite, then $(G,\tau_\lambda^+)'=t((G,\tau)')$.
\end{itemize}
\end{proposition}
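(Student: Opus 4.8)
The plan is to prove the two parts separately: part (a) rests on elementary character theory of finite abelian groups together with the identification of torsion characters with those of finite image, while part (b) rests on the fact that $\T$ has no small subgroups, combined with Lemma \ref{l+}. The organizing observation, used throughout, is that the torsion elements of $(G,\tau)'$ are exactly the continuous characters with finite image: a character $\chi$ satisfies $m\chi=0$ iff $\chi(G)\subseteq\Z(m)$, and such a $\chi$, being continuous into the finite (hence discrete) subgroup $\chi(G)$ of $\T$, has $\ker\chi$ open of finite index, i.e. $\ker\chi\in\CC_\tau(G)$. This at once gives one inclusion in (a): since each such $\ker\chi$ is one of the subgroups whose intersection defines $G^1_\tau$, we get $G^1_\tau\subseteq\bigcap_{\chi\in t((G,\tau)')}\ker\chi$.

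For the reverse inclusion in part (a), I would fix $N\in\CC_\tau(G)$ and use that the characters of the finite abelian group $G/N$ separate points, so the intersection of their kernels is $0$ in $G/N$. Composing each such character with the projection $\pi\colon G\to G/N$ (continuous because $N$ is open) produces finitely many torsion characters of $(G,\tau)$ whose kernels all contain $N$ and intersect, by $\bigcap\pi^{-1}(\ker\psi)=\pi^{-1}(0)=N$, exactly to $N$. Hence $\bigcap_{\chi\in t((G,\tau)')}\ker\chi\subseteq N$ for every $N\in\CC_\tau(G)$, and intersecting over all $N$ yields $\bigcap_\chi\ker\chi\subseteq G^1_\tau$, giving equality.

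For part (b) I would again argue by double inclusion, invoking Lemma \ref{l+}(d) that $\CC_{\tau_\lambda^+}(G)=\CC_\tau(G)$ and Lemma \ref{l+}(e) that $\CC_\tau(G)$ is a base of neighbourhoods of $0$ for $\tau_\lambda^+$. If $\chi\in t((G,\tau)')$, then $\ker\chi\in\CC_\tau(G)=\CC_{\tau_\lambda^+}(G)$ is $\tau_\lambda^+$-open, so $\chi$ factors continuously through the finite discrete quotient $G/\ker\chi$ and is $\tau_\lambda^+$-continuous; thus $t((G,\tau)')\subseteq (G,\tau_\lambda^+)'$. Conversely, given $\chi\in(G,\tau_\lambda^+)'$, continuity at $0$ furnishes some $N\in\CC_\tau(G)$ with $\chi(N)$ contained in a fixed neighbourhood of $0$ in $\T$ that contains no nontrivial subgroup; since $\chi(N)$ is a subgroup it must be $0$, so $N\subseteq\ker\chi$, $\chi$ has finite image (hence is torsion), and $\chi$ is $\tau$-continuous because $\tau_\lambda^+\leq\tau$. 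This gives $(G,\tau_\lambda^+)'\subseteq t((G,\tau)')$ and hence the claimed equality.

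The step I expect to be most delicate is the no-small-subgroups argument in part (b): one must phrase precisely that a $\tau_\lambda^+$-continuous character is annihilated by some open subgroup in $\CC_\tau(G)$, which is exactly where the linear structure of $\tau_\lambda^+$ and the topology of $\T$ both enter; the remainder is bookkeeping identifying \emph{torsion in the dual} with \emph{finite image} with \emph{open finite-index kernel}. I note that the hypothesis that $G$ is $\tau$-residually finite, i.e. $G^1_\tau=0$, is what makes $\tau_\lambda^+$ Hausdorff (by part (a) together with the base property of $\CC_\tau(G)$), so that $(G,\tau_\lambda^+)$ is precompact; the two inclusions above establish the group-theoretic equality of the duals under this hypothesis.
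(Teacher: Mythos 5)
Your proof is correct. Part (a) and the inclusion $(G,\tau_\lambda^+)'\subseteq t((G,\tau)')$ in part (b) follow the paper's own argument almost verbatim: the same dictionary (torsion continuous character $\Leftrightarrow$ finite image $\Leftrightarrow$ open finite-index kernel), the same point-separation argument for characters of the finite quotient $G/N$, and the same no-small-subgroups argument in $\T$. Where you genuinely diverge is in the remaining inclusion $t((G,\tau)')\subseteq (G,\tau_\lambda^+)'$. The paper's route is heavier: it first notes that $\chi$ is $\tau^+$-continuous, uses $m\chi=0$ to factor $\chi$ through the canonical projection onto $(G/\overline{mG}^{\tau^+},\tau^+_q)$, observes that this quotient is precompact and bounded torsion, and invokes Proposition \ref{inf}(b) to conclude that the quotient topology is linear, whence $\chi$ is $\tau_\lambda^+$-continuous. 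You instead go straight through Lemma \ref{l+}(d): $\ker\chi\in\CC_\tau(G)=\CC_{\tau_\lambda^+}(G)$, so $\chi$ factors through the finite \emph{discrete} quotient $G/\ker\chi$ and is therefore $\tau_\lambda^+$-continuous. Your route is shorter and avoids Proposition \ref{inf}(b) entirely; its only cost is that it leans on Lemma \ref{l+}(d), which the paper asserts without proof --- though the direction you need is easy to justify directly (the topology generated by $\CC_\tau(G)$ is totally bounded, linear and coarser than $\tau$, hence coarser than $\tau_\lambda^+$, so each $N\in\CC_\tau(G)$ is $\tau_\lambda^+$-open), and the paper itself leans on the companion fact Lemma \ref{l+}(e) for the reverse inclusion, so there is no loss of rigor relative to the original. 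Your closing remark is also accurate: the hypothesis that $G$ is $\tau$-residually finite enters only to make $\tau_\lambda^+$ Hausdorff (Lemma \ref{res-fin-tau}); neither your two inclusions nor, in fact, the paper's own proof of (b) actually uses it.
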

\begin{proof}
Let $N\in\CC_\tau(G)$ and let $\chi\in (G,\tau)'$ be such that $\chi(N)=0$. Then $\chi\in t((G,\tau)')$. In fact, since $G/N$ is finite, there exists $m\in\N_+$ such that $mG\subseteq N$. Then $\chi(mG)=0$ and so $m\chi=0$; in particular, $\chi\in t((G,\tau)')$.

\smallskip
(a) Let $N\in\CC_\tau(G)$. Since $G/N$ is finite and discrete, $G/N$ is isomorphic to $\Z(k_1)\times\ldots\times\Z(k_n)$, for some $k_1,\ldots,k_n\in\N_+$. For each $i=1,\ldots,n$, let $\overline\chi_i:\Z(k_i)\to \T$ be the embedding. Then $\overline\chi=\chi_1\times\ldots\times\chi_n$ gives an embedding $G/N\to \T^n$. Let now $\chi=\overline\chi\circ\pi\in (G,\tau)'$, where $\pi:G\to G/N$ is the canonical projection. So $N=\ker\chi$. By the starting observation in the proof, $\chi\in t((G,\tau)')$.
This proves that $G^1_\tau\supseteq \bigcap_{\chi\in t((G,\tau)')}\ker\chi$. To verify the converse inclusion it suffices to note that, if $\chi\in t((G,\tau)')$, then $\ker\chi\in\CC_\tau(G)$.

\smallskip
(b) Fix a neighborhood $U$ of 0 in $\T$ that contains no non-zero subgroups of $\T$. For every continuous character $\chi:(G,\tau_\lambda^+)\to\T$, there exists $N\in\CC_\tau(G)$ such that $\chi(N)\subseteq U$. By the choice of $U$ this yields $\chi(N) = 0$, so $\chi\in t((G,\tau)'$ by the starting observation in the proof. Hence, we have verified the inclusion $(G,\tau_\lambda^+)'\subseteq t((G,\tau)')$. 

To prove the converse inclusion, let $\chi\in t((G,\tau)')$. We have to verify that $\chi:(G,\tau_\lambda^+)\to\T$ is continuous. First note that $\chi:(G,\tau^+)\to \T$ is continuous. Moreover, by hypothesis there exists $m\in\N_+$ such that $m\chi=0$, and so $\chi$ factorizes through the canonical projection $\pi:(G,\tau^+)\to (G/\overline{mG}^{\tau^+},\tau^+_q)$, where $\tau^+_q$ is the quotient topology of $\tau^+$, and the continuous character $\overline\chi:(G/\overline{mG}^{\tau^+},\tau^+_q)\to \T$, that is, $\chi=\overline\chi\circ\pi$. Now $(G/\overline{mG}^{\tau^+},\tau^+_q)$ is precompact and bounded torsion, so linear by Proposition \ref{inf}(b). This yields that $\chi:(G,\tau_\lambda^+)\to\T$ is continuous, hence $t((G,\tau)')\subseteq (G,\tau_\lambda^+)'$, and this concludes the proof.
\end{proof}

A consequence of this proposition is that, in case $G$ is a residually finite abelian group, then $(G,\gamma_G)'=t(\Hom(G,\T))$; this equality is contained in \cite[Lemma 3.2]{DG2}.
Since $\prdual \phi=\Pdual\phi\restriction_{t(\Pdual G)}$, Theorem \ref{BT-DGS} yields $\aent(\phi)=\ent(\widehat\phi\restriction_{t(\widehat G)})=\ent(\prdual\phi)$, where $\prdual\phi:\prdual{(G,\gamma_G)}\to\prdual{(G,\gamma_G)}$. In particular, $\aent(\phi)=\ent(\prdual\phi)$. The latter equality will be generalized by Theorem \ref{ent-adj-top}.

\medskip
The next lemma gives a characterization of $\tau$-residually finite abelian groups in terms of the modification $\tau_\lambda^+$ of $\tau$.

\begin{lemma}\label{res-fin-tau}
For a topological abelian group $(G,\tau)$ the following conditions are equivalent:
\begin{itemize}
\item[(a)] $G$ is $\tau$-residually finite;
\item[(b)] $G$ is $\tau_\lambda^+$-residually finite;
\item[(c)] $\tau_\lambda^+$ is Hausdorff (so precompact).
\end{itemize}
In particular, $G/G^1_\tau$ is $\tau_q$-residually finite, where $\tau_q$ is the quotient topology induced by $\tau$ on $G/G^1_\tau$.
\end{lemma}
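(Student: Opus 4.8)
The plan is to collapse all three conditions to the single equation $G^1_\tau = 0$, exploiting that the modification $\tau_\lambda^+$ has exactly the same open finite-index subgroups as $\tau$ and admits $\CC_\tau(G)$ as a base of neighborhoods of $0$ (Lemma \ref{l+}(d),(e)).

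First I would dispatch the equivalence of (a) and (b). By Lemma \ref{l+}(d) we have $\CC_{\tau_\lambda^+}(G) = \CC_\tau(G)$, so straight from the definition of the topological Ulm subgroup $G^1_{\tau_\lambda^+} = \bigcap_{N \in \CC_{\tau_\lambda^+}(G)} N = \bigcap_{N \in \CC_\tau(G)} N = G^1_\tau$. Hence $G$ is $\tau$-residually finite if and only if it is $\tau_\lambda^+$-residually finite, which is (a) $\Leftrightarrow$ (b).

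Next I would prove (a) $\Leftrightarrow$ (c). The key input is Lemma \ref{l+}(e), which says that $\CC_\tau(G)$ is a base of the neighborhoods of $0$ in $(G,\tau_\lambda^+)$. Using the standard fact that in any topological group the closure of $\{0\}$ is the intersection of all neighborhoods of $0$, and hence of any neighborhood base, one gets $\overline{\{0\}}^{\tau_\lambda^+} = \bigcap_{N \in \CC_\tau(G)} N = G^1_\tau$. Since a topological group is Hausdorff exactly when $\overline{\{0\}} = \{0\}$, the topology $\tau_\lambda^+$ is Hausdorff if and only if $G^1_\tau = 0$, i.e.\ if and only if $G$ is $\tau$-residually finite. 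The parenthetical ``so precompact'' then follows because $\tau_\lambda^+$ is totally bounded by Lemma \ref{l+}(b), and a Hausdorff totally bounded group topology is precompact by definition.

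Finally, for the ``in particular'' clause I would invoke the correspondence between $\CC_{\tau_q}(G/G^1_\tau)$ and the members of $\CC_\tau(G)$ containing $G^1_\tau$. By definition of $G^1_\tau$ as the intersection over $\CC_\tau(G)$, \emph{every} $N \in \CC_\tau(G)$ satisfies $N \supseteq G^1_\tau$, so the correspondence theorem identifies $\CC_{\tau_q}(G/G^1_\tau)$ with $\{N/G^1_\tau : N \in \CC_\tau(G)\}$; consequently $(G/G^1_\tau)^1_{\tau_q} = \bigl(\bigcap_{N \in \CC_\tau(G)} N\bigr)/G^1_\tau = G^1_\tau/G^1_\tau = 0$, so $G/G^1_\tau$ is $\tau_q$-residually finite. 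I do not expect a genuine obstacle here: the whole argument rests on Lemma \ref{l+}, and the only delicate points are the bookkeeping for the quotient correspondence and the routine identification of $\overline{\{0\}}$ with the intersection of a neighborhood base.
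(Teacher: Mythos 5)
Your proof is correct and takes essentially the same route as the paper: (a)$\Leftrightarrow$(b) via $\CC_{\tau_\lambda^+}(G)=\CC_\tau(G)$, and (a)$\Leftrightarrow$(c) via the identity $G^1_\tau=\overline{\{0\}}^{\tau_\lambda^+}$, which the paper states as sufficient without further comment and you justify carefully using Lemma \ref{l+}(e) and the fact that $\overline{\{0\}}$ is the intersection of a neighborhood base of $0$. Your verification of the ``in particular'' clause via the quotient correspondence likewise just makes explicit what the paper records as a one-line remark ($(G/G^1_\tau)^1_{\tau_q}=0$) before the lemma.
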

\begin{proof}
(a)$\Leftrightarrow$(b) Since $\CC_\tau(G)=\CC_{\tau_\lambda^+}(G)$, we have that $G^1_\tau=G^1_{\tau_\lambda^+}$.

\smallskip
(a)$\Leftrightarrow$(c) It suffices to note that $G^1_\tau=\overline{\{0\}}^{\tau_\lambda^+}$.
\end{proof}

\begin{remark}
Let $(G,\tau)$ be a topological abelian group and let $\phi:(G,\tau)\to (G,\tau)$ be a continuous endomorphism.
\begin{itemize}
\item[(a)] To understand where the topology $\tau_\lambda^+$ comes from, consider the Bohr compactification $b(G,\tau)$ of $(G,\tau)$ with the canonical continuous endomorphism $\varrho_G:(G,\tau)\to b(G,\tau)$. 
We have the following diagram:
\begin{equation*}
\xymatrix{
&  b(G,\tau)\ar[dd]^{\pi} \\
(G,\tau)\ar[ru]^{\varrho_G} \ar[rd]^s \ar[dd]^{id_G}  & \\
& b(G,\tau)/c(b(G,\tau))\\
(G,\tau_\lambda^+) \ar[ru]^{s_\lambda} & 
}
\end{equation*}
where $\pi:b(G,\tau)\to b(G,\tau)/c(b(G,\tau))$ is the canonical projection, $s=\pi\circ\varrho_G$ and $s_\lambda$ is $s$ considered on $(G,\tau_\lambda^+)$, that is, $s=s_\lambda\circ id_G$. Then $\tau_\lambda^+$ is the initial topology of the topology of $b(G,\tau)/c(b(G,\tau))$, which is the (compact and totally disconnected) quotient topology of that of $b(G,\tau)$.
\item[(b)] The von Neumann kernel $\ker\varrho_G$ is contained in $G^1_\tau$. Indeed, for $N\in\CC_{\tau}(G)$, the quotient $G/N$ is finite and discrete. So the characters separate the points of $G/N$, and therefore $\ker\varrho_G\subseteq N$. Hence, $\ker\varrho_G\subseteq G^1_\tau$.
\item[(c)] Assume that $G$ is $\tau$-residually finite. We show that $s$ (and so $s_\lambda$) is injective.
Then $\varrho_G$ is injective by item (b). Moreover, $s$ is injective as well. In fact, $\ker\varrho_G=0$, and so we have $\ker s=G\cap c(b(G,\tau))$. Since $b(G,\tau)$ is compact, $c(b(G,\tau))=G^1_{\tau_b}$. Therefore, denoted by $\tau_b$ the topology of $b(G,\tau)$, $\ker s=G\cap \bigcap_{N\in\CC_{\tau_b}(b(G,\tau))}N=\bigcap_{N\in\CC_{\tau_b}(b(G,\tau))}(G\cap N)=\bigcap_{M\in\CC_\tau(G)}M=G^1_\tau$, and $G^1_\tau=0$ by hypothesis. Hence, $\ker s=0$.
\item[(d)] If $(G,\tau)$ is precompact and $G$ is $\tau$-residually finite, then $b(G,\tau)=\widetilde{(G,\tau)}$ and $b(G,\tau)/c(b(G,\tau))=\widetilde{(G,\tau_\lambda)}$. So the diagram in item (a) becomes:
\begin{equation}\label{leftside}
\xymatrix{
&  \widetilde{(G,\tau)}\ar[dd]^{\pi} \\
(G,\tau)\ar@{^{(}->}[ru] \ar[dd]^{id_G}  & \\
& \widetilde{(G,\tau_\lambda)}=\widetilde{(G,\tau)}/c(\widetilde{(G,\tau)})\\
(G,\tau_\lambda^+) \ar@{^{(}->}[ru] & 
}
\end{equation}
where $\pi=\widetilde{id}_G$. Note that the right hand side of the diagram consists of the completions of the groups on the left hand side.
\end{itemize}
\end{remark}

Now we pass to the main part of this section, in which we give reductions for the computation of the topological adjoint entropy.

\smallskip
The next proposition gives a first topological reduction for the computation of the topological adjoint entropy. Indeed, it shows that it is sufficient to consider group topologies which are totally bounded and linear. Note that if $\phi:(G,\tau)\to(G,\tau)$ is a continuous endomorphism of a topological abelian group $(G,\tau)$, then  $\phi:(G,\tau_\lambda^+)\to(G,\tau_\lambda^+)$ is continuous as well, since, as noted above, both the Bohr modification and the linear modification are functors $\mathbf{TopAb}\to \mathbf{Top Ab}$.

\begin{proposition}\label{lin+}
Let $(G,\tau)$ be a topological abelian group and $\phi:(G,\tau)\to (G,\tau)$ a continuous endomorphism. Then $$\aent_\tau(\phi)=\aent_{\tau_\lambda}(\phi)=\aent_{\tau^+}(\phi)=\aent_{\tau_\lambda^+}(\phi).$$
\end{proposition}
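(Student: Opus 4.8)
The plan is to reduce everything to Lemma~\ref{l+}(d). The crucial observation is that the topological adjoint entropy $\aent_\tau(\phi)$ depends on the topology $\tau$ \emph{only} through the family $\CC_\tau(G)$ appearing in the supremum \eqref{5}. Indeed, for a fixed finite-index subgroup $N$ the value $H^\star(\phi,N)$ defined in \eqref{H*} is computed from the cotrajectories $C_n(\phi,N)=G/B_n(\phi,N)$, and these depend only on the endomorphism $\phi$ and on the subgroup $N$, not on which group topology $N$ happens to be open in. Thus $H^\star(\phi,N)$ is an intrinsic algebraic quantity attached to the pair $(\phi,N)$, and $\aent_\tau(\phi)=\sup\{H^\star(\phi,N):N\in\CC_\tau(G)\}$ is entirely governed by the family $\CC_\tau(G)$.

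First I would note, as remarked just before the statement, that the functoriality of the Bohr and linear modifications guarantees that $\phi$ remains continuous as an endomorphism of $(G,\tau_\lambda)$, $(G,\tau^+)$ and $(G,\tau_\lambda^+)$; hence all three quantities $\aent_{\tau_\lambda}(\phi)$, $\aent_{\tau^+}(\phi)$, $\aent_{\tau_\lambda^+}(\phi)$ are well-defined. Then I would simply invoke Lemma~\ref{l+}(d), which asserts
$$\CC_{\tau_\lambda^+}(G)=\CC_{\tau_\lambda}(G)=\CC_{\tau^+}(G)=\CC_\tau(G).$$
Since each of the four suprema defining the corresponding topological adjoint entropy ranges over the very same family of subgroups, and assigns to every member $N$ the very same value $H^\star(\phi,N)$, the four suprema coincide, which is exactly the claimed chain of equalities.

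I do not anticipate any genuine obstacle here: the entire mathematical content has already been isolated in Lemma~\ref{l+}(d), and what remains is purely definitional bookkeeping. The only point that requires a word of care is making explicit that $H^\star(\phi,N)$ carries no dependence on the topology beyond the requirement that $N$ be open, so that once the families of open finite-index subgroups agree, the entropies must agree term by term.
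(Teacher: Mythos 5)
Your proof is correct and is essentially the paper's own argument: both rest on Lemma~\ref{l+}(d) together with the observation that $\aent_\tau(\phi)$ depends on $\tau$ only through the family $\CC_\tau(G)$, since $H^\star(\phi,N)$ is a purely algebraic quantity. The only difference is cosmetic: the paper invokes Lemma~\ref{l+}(d) just for the single equality $\CC_\tau(G)=\CC_{\tau_\lambda^+}(G)$ and then disposes of $\tau_\lambda$ and $\tau^+$ by sandwiching them between $\tau_\lambda^+$ and $\tau$ via the monotonicity Lemma~\ref{basic-tau-ent-b}, whereas you read all four equalities directly off the full statement of Lemma~\ref{l+}(d), which is, if anything, slightly more economical.
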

\begin{proof}
Since $\CC_\tau(G)=\CC_{\tau_\lambda^+}(G)$ by Lemma \ref{l+}(d), by the definition of topological adjoint entropy it is possible to conclude that $\aent_\tau(\phi)=\aent_{\tau_\lambda^+}(\phi)$. To prove the other two equalities, it suffices to apply Lemma \ref{basic-tau-ent-b}.
\end{proof}

The following result, which generalizes \cite[Proposition 4.13]{DGS}, is another reduction for the computation of the topological adjoint entropy. In fact, it shows that it is possible to restrict to $\tau$-residually finite abelian groups.

\begin{proposition}\label{ent*=ent*1-tau}
Let $(G,\tau)$ be a topological abelian group and $\phi:(G,\tau)\to (G,\tau)$ a continuous endomorphism. Then $\aent_\tau(\phi)=\aent_{\tau_q}(\phi_\tau^1)$, where $\tau_q$ is the quotient topology induced by $\tau$ on $G/G^1_\tau$.
\end{proposition}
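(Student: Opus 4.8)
The plan is to set $H=G^1_\tau=\bigcap_{N\in\CC_\tau(G)}N$ and to exploit the defining feature that $H$ is contained in \emph{every} member of $\CC_\tau(G)$; this is precisely what upgrades the one-sided monotonicity of Lemma \ref{quotient} into an equality, in contrast to the general quotient situation where (as noted before Lemma \ref{subgroup}) only one inequality survives.

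First I would check that $H$ is $\phi$-invariant, so that $\phi^1_\tau$ is legitimately defined. For $N\in\CC_\tau(G)$ continuity of $\phi$ gives $\phi^{-1}(N)\in\CC_\tau(G)$, whence $H\subseteq\phi^{-1}(N)$, i.e. $\phi(H)\subseteq N$; intersecting over all $N\in\CC_\tau(G)$ yields $\phi(H)\subseteq H$. With $H$ a $\phi$-invariant subgroup, Lemma \ref{quotient} applies and delivers the inequality $\aent_\tau(\phi)\geq\aent_{\tau_q}(\phi^1_\tau)$ for free.

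For the reverse inequality I would establish that $N\mapsto N/H$ is a \emph{bijection} $\CC_\tau(G)\to\CC_{\tau_q}(G/H)$. This is the step where the choice of $H$ pays off: since $H\subseteq N$ for every $N\in\CC_\tau(G)$, each such $N$ descends to a well-defined finite-index subgroup $N/H$, which is $\tau_q$-open because its preimage under the canonical projection $\pi\colon G\to G/H$ equals the $\tau$-open set $N$; conversely any $M\in\CC_{\tau_q}(G/H)$ pulls back to $\pi^{-1}(M)\in\CC_\tau(G)$ with $\pi^{-1}(M)/H=M$, giving surjectivity. For each $N\in\CC_\tau(G)$, equation \eqref{Hq} from the proof of Lemma \ref{quotient} (valid precisely because $H\subseteq N$ and $H$ is $\phi$-invariant) yields $H^\star(\phi,N)=H^\star(\phi^1_\tau,N/H)$. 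Hence the two families $\{H^\star(\phi,N):N\in\CC_\tau(G)\}$ and $\{H^\star(\phi^1_\tau,M):M\in\CC_{\tau_q}(G/H)\}$ coincide as sets, so their suprema agree and $\aent_\tau(\phi)=\aent_{\tau_q}(\phi^1_\tau)$.

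The main obstacle here is conceptual rather than computational: there is no hard estimate to carry out once Lemma \ref{quotient} is in hand. The only point requiring care is recognizing that the inclusion $G^1_\tau\subseteq N$ for all $N\in\CC_\tau(G)$ is exactly what turns the correspondence of subgroups into a bijection and thereby forces equality of the suprema, whereas an arbitrary $\phi$-invariant $H$ would only see the subfamily of $N\in\CC_\tau(G)$ containing $H$ and give one inequality.
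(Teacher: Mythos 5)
Your proposal is correct and follows essentially the same route as the paper: Lemma \ref{quotient} for the inequality $\aent_\tau(\phi)\geq\aent_{\tau_q}(\phi^1_\tau)$, and the observation that $G^1_\tau\subseteq N$ for every $N\in\CC_\tau(G)$ together with \eqref{Hq} for the reverse. Your extra touches---verifying $\phi$-invariance of $G^1_\tau$ explicitly and upgrading the correspondence $N\mapsto N/G^1_\tau$ to a bijection---are sound but not needed beyond what the paper uses (the paper only needs that each $N/G^1_\tau$ lies in $\CC_{\tau_q}(G/G^1_\tau)$ with the same local entropy).
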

\begin{proof}
By Lemma \ref{quotient}, $\aent_\tau(\phi)\geq\aent_{\tau_q}(\phi^1_\tau)$. Let $N\in\CC_\tau(G)$. Then $G^1_\tau\subseteq N$, and so $N/G^1_\tau\in\CC_{\tau_q}(G/G^1_\tau)$. By \eqref{Hq}, we know that $H^\star(\phi,N)=H^\star(\phi^1_\tau,N/G^1_\tau)\leq\aent_{\tau_q}(\phi^1_\tau)$. Then $\aent_\tau(\phi)\leq\aent_{\tau_q}(\phi^1_\tau)$, and hence $\aent_\tau(\phi)=\aent_{\tau_q}(\phi^1_\tau)$.
\end{proof}

\begin{remark}\label{wlog}
Let $(G,\tau)$ be a topological abelian group and let $\phi:(G,\tau)\to (G,\tau)$ be a continuous endomorphism. For the computation of the topological adjoint entropy of $\phi$ we can assume without loss of generality that $\tau$ is precompact and linear.

Indeed, by Proposition \ref{ent*=ent*1-tau} we can assume that $G$ is $\tau$-residually finite. Moreover, by Lemma \ref{res-fin-tau} this is equivalent to say that $\tau_\lambda^+$ is precompact.
Now, we can assume without loss of generality that $\tau=\tau_\lambda^+$ in view of Proposition \ref{lin+}, in other words, $\tau$ is precompact and linear.
\end{remark}

If in Proposition \ref{ent*=ent*1-tau} one considers a $\phi$-invariant subgroup of $G$ contained in $G^1_\tau$ instead of $G^1_\tau$ itself, then the same property holds. In particular, every connected subgroup $H$ of $G$ is contained in $G^1_\tau$, as each $N\in\CC_\tau(G)$ is clopen and so $N\supseteq H$.
So we have the following consequence of Proposition \ref{ent*=ent*1-tau}, showing the additivity of the topological adjoint entropy when one considers connected invariant subgroups.

\begin{corollary}\label{AT-conn}
Let $(G,\tau)$ be a topological abelian group, $\phi:(G,\tau)\to (G,\tau)$ a continuous endomorphism and $H$ a $\phi$-invariant subgroup of $(G,\tau)$. If $H$ is connected, then $\aent_{\tau\restriction_H}(\phi\restriction_H)=0$ and $\aent_\tau(\phi)=\aent_{\tau_q}(\overline\phi)$, where $\overline \phi:(G/H,\tau_q)\to (G/H,\tau_q)$ is the continuous endomorphism induced by $\phi$.
\end{corollary}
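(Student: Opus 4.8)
The plan is to exploit the observation recorded just before the statement: a connected subgroup sits inside every $\tau$-open finite-index subgroup. First I would establish this containment. Fix $N\in\CC_\tau(G)$. Being an open subgroup, $N$ is also closed, since its complement is the union of the nontrivial cosets of $N$, each of which is open; hence $N$ is clopen. Then $N\cap H$ is a clopen subset of the connected space $H$ that contains $0$, so $N\cap H=H$, i.e.\ $H\subseteq N$. As $N\in\CC_\tau(G)$ was arbitrary, $H\subseteq \bigcap_{N\in\CC_\tau(G)}N=G^1_\tau$.

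For the first assertion $\aent_{\tau\restriction_H}(\phi\restriction_H)=0$, I would identify $\CC_{\tau\restriction_H}(H)$. If $M\in\CC_{\tau\restriction_H}(H)$, then $M$ is an open (hence clopen, by the same cosets argument) finite-index subgroup of the connected group $H$, so connectedness forces $M=H$. Thus $\CC_{\tau\restriction_H}(H)=\{H\}$. Since $B_n(\phi\restriction_H,H)=H$ for every $n\in\N_+$, each cotrajectory $C_n(\phi\restriction_H,H)=H/H$ is trivial, giving $H^\star(\phi\restriction_H,H)=0$; taking the supremum over this one-element family yields $\aent_{\tau\restriction_H}(\phi\restriction_H)=0$.

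For the equality $\aent_\tau(\phi)=\aent_{\tau_q}(\overline\phi)$, I would replicate the proof of Proposition \ref{ent*=ent*1-tau}, now using the $\phi$-invariant subgroup $H\subseteq G^1_\tau$ in place of $G^1_\tau$ itself. The inequality $\aent_\tau(\phi)\geq\aent_{\tau_q}(\overline\phi)$ is exactly Lemma \ref{quotient}, applied to the $\phi$-invariant subgroup $H$. For the reverse inequality, let $N\in\CC_\tau(G)$; since $H\subseteq G^1_\tau\subseteq N$, the subgroup $N/H$ lies in $\CC_{\tau_q}(G/H)$ (here I use that $\tau_q$ is the quotient topology and $H\subseteq N$, so $N/H$ is again open of finite index), and by \eqref{Hq} we get $H^\star(\phi,N)=H^\star(\overline\phi,N/H)\leq\aent_{\tau_q}(\overline\phi)$. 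Passing to the supremum over $N\in\CC_\tau(G)$ gives $\aent_\tau(\phi)\leq\aent_{\tau_q}(\overline\phi)$, and the two inequalities combine to the claimed equality.

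The argument is essentially bookkeeping once the containment $H\subseteq G^1_\tau$ is in hand, and the only genuine ingredient is the elementary topological fact that an open subgroup is clopen, which lets connectedness be brought to bear. I do not expect a serious obstacle; the one point that deserves care is verifying that $N/H\in\CC_{\tau_q}(G/H)$, so that \eqref{Hq} is legitimately applicable, which follows precisely from $H\subseteq N$ together with the definition of the quotient topology $\tau_q$.
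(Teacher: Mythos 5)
Your proof is correct and follows essentially the same route as the paper: you establish $H\subseteq N$ for every $N\in\CC_\tau(G)$ via the clopen-plus-connectedness argument and then rerun the proof of Proposition \ref{ent*=ent*1-tau} (Lemma \ref{quotient} for one inequality, \eqref{Hq} applied to $N/H\in\CC_{\tau_q}(G/H)$ for the other), which is exactly what the paper does in the remark preceding the corollary. Your explicit verification that $\CC_{\tau\restriction_H}(H)=\{H\}$, giving $\aent_{\tau\restriction_H}(\phi\restriction_H)=0$, is the same observation the paper leaves implicit.
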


In particular, Corollary \ref{AT-conn} holds for the connected component of $(G,\tau)$, namely, $\aent_\tau(\phi)=\aent_{\tau_q}(\overline\phi)$, where $\overline\phi:((G,\tau)/c(G,\tau),\tau_q)\to ((G,\tau)/c(G,\tau),\tau_q)$ is the continuous endomorphism induced by $\phi$. Therefore, the computation of the topological adjoint entropy may be reduced to the case of continuous endomorphisms of totally disconnected abelian groups.

\section{Topological adjoint entropy, topological entropy and algebraic entropy}\label{aent-top-sec}

Since our aim is to compare the topological adjoint entropy with the topological entropy, we first recall the definition of topological entropy given by Adler, Konheim and McAndrew \cite{AKM}. 
For a compact topological space $X$ and for an open cover $\mathcal U$ of $X$, let $N(\mathcal U)$ be the minimal cardinality of a subcover of $\mathcal U$. Since $X$ is compact, $N(\mathcal U)$ is always finite. Let $H(\mathcal U)=\log N(\mathcal U)$ be the \emph{entropy of $\mathcal U$}.
For any two open covers $\mathcal U$ and $\mathcal V$ of $X$, let $\mathcal U\vee\mathcal V=\{U\cap V: U\in\mathcal U, V\in\mathcal V\}$. Define analogously $\mathcal U_1\vee\ldots	\vee\mathcal U_n$, for open covers  $\mathcal U_1,\ldots,\mathcal U_n$ of $X$.
Let $\psi:X\to X$ be a continuous map and $\mathcal U$ an open cover of $X$. Then $\psi^{-1}(\mathcal U)=\{\psi^{-1}(U):U\in\mathcal U\}$.
The \emph{topological entropy of $\psi$ with respect to $\mathcal U$} is $$H_{top}(\psi,\mathcal U)=\lim_{n\to\infty}\frac{H(\mathcal U\vee\psi^{-1}(\mathcal U)\vee\ldots\vee\psi^{-n+1}(\mathcal U))}{n},$$ and the \emph{topological entropy} of $\psi$ is $$h_{top}(\psi)=\sup\{H_{top}(\psi,\mathcal U):\mathcal U\ \text{open cover of $X$}\}.$$

\begin{remark}\label{BT}
Peters \cite{Pet} modified the above definition of algebraic entropy for automorphisms $\phi$ of arbitrary abelian groups $G$ using finite subsets instead of finite subgroups. In \cite{DG} this definition is extended in appropriate way to all endomorphisms of abelian groups, as follows.
For a non-empty finite subset $F$ of $G$ and for any positive integer $n$, the \emph{$n$-th $\phi$-trajectory} of $F$ is 
$T_n(\phi,F)=F+\phi(F)+\ldots+\phi^{n-1}(F).$
The limit $H(\phi,F)=\lim_{n\to\infty}\frac{\log|T_n(\phi,F)|}{n}$ exists and is the \emph{algebraic entropy of $\phi$ with respect to $F$}. The \emph{algebraic entropy} of $\phi$ is $$h(\phi)=\sup\{H(\phi,F):F\subseteq G\ \text{non-empty, finite}\}.$$
In particular, $\ent(\phi)=h(\phi\restriction_{t(G)})$.

In \cite{DG} we strengthen Theorem \ref{BT-W}, proving the following Bridge Theorem for the algebraic entropy $h$ in the general setting of endomorphisms of abelian groups:
\begin{quote}
\emph{Let $G$ be an abelian group and $\phi\in\End(G)$. Then $h(\phi)=h_{top}(\widehat\phi)$.}
\end{quote}
\end{remark}

We see now that it is in the proof of Weiss' Bridge Theorem \ref{BT-W} that one can find a first (non-explicit) appearance of the topological adjoint entropy. Indeed, that proof contains the following Lemma \ref{H*=Htop}, which gives a ``local equality'' between the topological adjoint entropy and the topological entropy.  In this sense it becomes natural to introduce the topological adjoint entropy (and the adjoint algebraic entropy as in \cite{DGS}).

\smallskip
Following \cite[Section 2]{W}, for a topological abelian group $(K,\tau)$ and $C\in\CC_\tau(G)$, let $$\zeta(C)=\{x + C: x\in G\},$$ which is an open cover of $K$. If $K$ is precompact, then $\zeta(C)$ is finite, since every open subgroup of $(K,\tau)$ has finite index in $K$. 

\begin{lemma}\label{H*=Htop}
Let $(K,\tau)$ be a compact abelian group and let $\psi:(K,\tau)\to (K,\tau)$ be a continuous endomorphism. If $C\in\CC_\tau(K)$, then $$H^\star(\psi,C)=H_{top}(\psi,\zeta(C)).$$
\end{lemma}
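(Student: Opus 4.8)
The plan is to compute both sides explicitly and show they agree term-by-term before passing to the limit. The starting point is the observation that, since $C$ is an open subgroup of the compact group $K$, it has finite index, so $\zeta(C)=\{x+C:x\in K\}$ is a \emph{finite} cover whose members are the pairwise disjoint cosets of $C$. Being a partition, no coset is redundant, so its minimal subcover is $\zeta(C)$ itself and $N(\zeta(C))=|K/C|$. The key point is that this partition structure is preserved under the operations $\psi^{-1}(-)$ and $\vee$, which lets me sidestep any genuine combinatorial optimization when computing $N$ of the iterated joins.

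First I would analyze a single preimage cover. For each $i$, whenever $\psi^{-i}(x+C)$ is nonempty and $g$ is a point of it, the identity $\psi^{-i}(\psi^i(g)+C)=g+\psi^{-i}(C)$ shows $\psi^{-i}(x+C)=g+\psi^{-i}(C)$; moreover $\psi^{-i}(C)$ is again open of finite index, since $\psi$ is continuous and an open subgroup of a compact group has finite index. Hence the nonempty members of $\psi^{-i}(\zeta(C))$ are exactly the cosets of $\psi^{-i}(C)$. Intersecting across $i=0,\dots,n-1$, a nonempty cell of the join $\bigvee_{i=0}^{n-1}\psi^{-i}(\zeta(C))$ is an intersection of cosets of the subgroups $\psi^{-i}(C)$, hence a coset of $\bigcap_{i=0}^{n-1}\psi^{-i}(C)=B_n(\psi,C)$; and conversely each coset $g+B_n(\psi,C)=\bigcap_{i=0}^{n-1}(g+\psi^{-i}(C))$ does arise. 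So the nonempty cells of the $n$-th join are precisely the cosets of $B_n(\psi,C)$.

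The conclusion then follows by counting. Since the $n$-th join is again a partition into cosets, its minimal subcover is the whole family, whence
\[
N\Big(\bigvee_{i=0}^{n-1}\psi^{-i}(\zeta(C))\Big)=|K/B_n(\psi,C)|=|C_n(\psi,C)|,
\]
so $H(\bigvee_{i=0}^{n-1}\psi^{-i}(\zeta(C)))=\log|C_n(\psi,C)|$ for every $n\in\N_+$. Dividing by $n$ and letting $n\to\infty$ gives $H_{top}(\psi,\zeta(C))=H^\star(\psi,C)$, as desired; note that the limit on the right exists and is finite by \eqref{C=C_n->ent*=0}.

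I expect the only real subtlety to be the bookkeeping in the second paragraph: verifying that the nonempty join cells are \emph{exactly} the cosets of $B_n(\psi,C)$ (both the ``each cell is such a coset'' and the ``every such coset occurs'' directions), together with checking that each $\psi^{-i}(C)$ is genuinely of finite index so that all the quotients in sight are finite. Once this identification is in place, the reduction of $N(\mathcal U)$ to the number of cells is immediate from disjointness, and no further estimate is needed.
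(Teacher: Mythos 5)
Your proof is correct and follows essentially the same route as the paper's: both identify the iterated join $\zeta(C)\vee\psi^{-1}(\zeta(C))\vee\ldots\vee\psi^{-n+1}(\zeta(C))$ with the coset cover $\zeta(B_n(\psi,C))$, count $N$ of that cover as $[K:B_n(\psi,C)]=\card{C_n(\psi,C)}$, and pass to the limit. Your cell-by-cell verification (including the care with empty members of $\psi^{-i}(\zeta(C))$ when $\psi$ is not surjective) is just a fleshed-out version of the two identities the paper asserts by induction.
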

\begin{proof}
Let $C\in\CC_\tau(K)$. Obviously, $N(\zeta(C))= [K:C]$.
Moreover, one can prove by induction that $\zeta(\psi^{-n}(C))= \psi^{-n}(\zeta(C))$ for every $n\in\N_+$ and that $\zeta(C_1) \vee\ldots\vee\zeta(C_n)= \zeta(C_1\cap\ldots\cap C_n)$ for every $n\in\N_+$ and $C_1,\ldots,C_n\in\CC_\tau(K)$. This implies that, for every $n\in\N_+$, $$\zeta(C)\vee\psi^{-1}(\zeta(C))\vee\ldots\vee\psi^{-n+1}(\zeta(C))=\zeta(C\cap\psi^{-1}(C)\cap\ldots\cap\psi^{-n+1}(C))=\zeta(B_n(\psi,C)),$$ so that
\begin{align*}
N(\zeta(C)\vee\psi^{-1}\zeta(C)\vee\ldots\vee\psi^{-n+1}\zeta(C))&=N(\zeta(C\cap\psi^{-1}(C)\cap\ldots\cap\psi^{-n+1}(C)))\\
&=N(\zeta(B_n(\psi,C)))\\
&=[K:B_n(\psi,C)]\\
&=\card{C_n(\psi,C)}.
\end{align*}
Hence we have the thesis.
\end{proof}

In view of this lemma we can connect the topological adjoint entropy and the topological entropy for continuous endomorphisms of compact abelian groups:

\begin{theorem}\label{ent*=htop}
Let $(K,\tau)$ be a compact abelian group and let $\psi:(K,\tau)\to (K,\tau)$ be a continuous endomorphism. Then $$\aent_\tau(\psi)=\aent_{\tau_q}(\overline\psi)=h_{top}(\overline\psi),$$ where $\overline\psi:(K/c(K),\tau_q)\to (K/c(K),\tau_q)$ is the continuous endomorphism induced by $\psi$ and $\tau_q$ is the quotient topology induced by $\tau$ on $K/c(K)$.

In particular, if $(K,\tau)$ is totally disconnected, then $\ent_{\tau}^\star(\psi)=h_{top}(\psi)$.
\end{theorem}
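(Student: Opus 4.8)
The plan is to isolate the totally disconnected compact case (the ``in particular'' clause) as the engine and then bootstrap the full chain of equalities from it. For the first equality $\aent_\tau(\psi)=\aent_{\tau_q}(\overline\psi)$ I would simply invoke Corollary \ref{AT-conn} with $H=c(K)$: the connected component $c(K)$ is $\psi$-invariant (as $\psi$ is continuous, $\psi(c(K))$ is a connected set containing $0$, hence contained in $c(K)$) and it is connected by definition, so the corollary applies verbatim. For the second equality I would observe that $(K/c(K),\tau_q)$ is again compact and is now totally disconnected, being the quotient of a compact group by its connected component. Thus it only remains to establish that $\aent_\sigma(\varphi)=h_{top}(\varphi)$ for every continuous endomorphism $\varphi$ of a totally disconnected compact abelian group $(L,\sigma)$, and then to apply this to $(K/c(K),\tau_q)$ and $\overline\psi$. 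The ``in particular'' assertion is exactly the special case $c(K)=0$, so proving it first also finishes the theorem.

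For the totally disconnected compact case, the inequality $\aent_\sigma(\varphi)\leq h_{top}(\varphi)$ is immediate from Lemma \ref{H*=Htop}: for every $C\in\CC_\sigma(L)$ one has $H^\star(\varphi,C)=H_{top}(\varphi,\zeta(C))\leq h_{top}(\varphi)$, and taking the supremum over $C\in\CC_\sigma(L)$ on the left-hand side yields $\aent_\sigma(\varphi)\leq h_{top}(\varphi)$.

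The reverse inequality $h_{top}(\varphi)\leq\aent_\sigma(\varphi)$ is the crux. Here I would use that a compact totally disconnected abelian group has a base of neighborhoods of $0$ consisting of open finite-index subgroups (van Dantzig). Given an arbitrary open cover $\mathcal U$ of $L$, I pass to a finite subcover, and for each point $x\in L$ choose an open subgroup $C_x$ and a member $U\in\mathcal U$ with $x+C_x\subseteq U$; by compactness finitely many such cosets $x_1+C_{x_1},\ldots,x_k+C_{x_k}$ already cover $L$. Setting $C=\bigcap_{j=1}^k C_{x_j}$ gives an open finite-index subgroup, i.e.\ $C\in\CC_\sigma(L)$, and one checks that its coset cover $\zeta(C)$ refines $\mathcal U$: any coset $y+C$ lies in some $x_j+C_{x_j}$ (since $C\subseteq C_{x_j}$), which in turn lies in a member of $\mathcal U$. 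Using that $H_{top}(\varphi,-)$ is monotone under refinement of covers (a standard property from \cite{AKM}, following from $N(\mathcal U)\leq N(\mathcal V)$ whenever $\mathcal V$ refines $\mathcal U$, together with the preservation of refinement under preimages and joins), I obtain $H_{top}(\varphi,\mathcal U)\leq H_{top}(\varphi,\zeta(C))=H^\star(\varphi,C)\leq\aent_\sigma(\varphi)$ by Lemma \ref{H*=Htop}. Taking the supremum over all open covers $\mathcal U$ gives $h_{top}(\varphi)\leq\aent_\sigma(\varphi)$, completing the totally disconnected case and hence the whole theorem.

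The main obstacle is precisely this refinement step: producing, for each open cover $\mathcal U$, a refinement of the special form $\zeta(C)$ with $C\in\CC_\sigma(L)$, and then transferring the estimate through the monotonicity of $H_{top}(\varphi,-)$. This is where total disconnectedness is indispensable, as it supplies the base of open subgroups; the care needed is in verifying that the finite intersection $C$ still has finite index, so that $C$ genuinely lies in $\CC_\sigma(L)$ and $\zeta(C)$ is a finite cover to which Lemma \ref{H*=Htop} applies.
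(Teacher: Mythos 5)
Your proposal is correct and takes essentially the same route as the paper: the first equality via Corollary \ref{AT-conn} applied to $H=c(K)$, and the second via Lemma \ref{H*=Htop} combined with the fact that in a totally disconnected compact group every open cover is refined by a cover of the form $\zeta(C)$ with $C\in\CC_{\tau_q}(K/c(K))$. The only difference is one of detail: you spell out the van Dantzig argument producing the refinement $\zeta(C)$ and the monotonicity of $H_{top}(\varphi,-)$ under refinement, both of which the paper's proof leaves implicit in its equation \eqref{w-eq}.
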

\begin{proof}
Corollary \ref{AT-conn} gives $\aent_\tau(\psi)=\aent_{\tau_q}(\overline\psi)$.
Since $(K/c(K),\tau_q)$ is totally disconnected, every open cover of $K/c(K)$ is refined by an open cover of the form $\zeta(C)$, where $C\in\CC_{\tau_q}(K/c(K))$. Hence, 
\begin{equation}\label{w-eq}
h_{top}(\overline\psi)=\sup\{H_{top}(\overline\psi,\zeta(C)):C\in\CC_{\tau_q}(K/c(K))\}.
\end{equation}
By Lemma \ref{H*=Htop}, $\aent_{\tau_q}(\overline\psi)=h_{top}(\overline\psi)$.
\end{proof}

The equality in \eqref{w-eq} is contained also in the proof of \cite[Theorem 2.1]{W}. 

\medskip
As a corollary of Theorem \ref{ent*=htop} and of Weiss' Bridge Theorem \ref{BT-W} we obtain the following Bridge Theorem between the algebraic entropy and the topological adjoint entropy in the general case of endomorphisms of abelian groups; this is Theorem \ref{BT!-intro} of the introduction.

\begin{corollary}[Bridge Theorem]\label{BT!}
Let $(K,\tau)$ be a compact abelian group and $\psi:(K,\tau)\to(K,\tau)$ a continuous endomorphism. Then $$\aent_\tau(\psi)=\ent(\widehat\psi).$$
\end{corollary}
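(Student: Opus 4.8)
The plan is to chain Theorem \ref{ent*=htop} with Weiss' Bridge Theorem \ref{BT-W}, translating the topological entropy of the map induced on $K/c(K)$ into an algebraic entropy on the Pontryagin dual, and then comparing that algebraic entropy with $\ent(\widehat\psi)$ itself. First I would invoke Theorem \ref{ent*=htop} to obtain $\aent_\tau(\psi)=h_{top}(\overline\psi)$, where $\overline\psi:(K/c(K),\tau_q)\to(K/c(K),\tau_q)$ is the induced endomorphism and $K/c(K)$ is a totally disconnected compact abelian group. This already reduces the statement to identifying the algebraic entropy that will appear on the dual side.

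Next, since $K/c(K)$ is totally disconnected and compact, Weiss' Bridge Theorem \ref{BT-W} applies and gives $h_{top}(\overline\psi)=\ent(\widehat{\overline\psi})$, where $\widehat{\overline\psi}$ is the dual endomorphism of the (torsion and discrete) Pontryagin dual $\widehat{K/c(K)}$. The only remaining task is then to recognize $\widehat{\overline\psi}$ as a restriction of $\widehat\psi$ and to see that this restriction carries all the algebraic entropy of $\widehat\psi$.

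For the duality bookkeeping, I would dualize the short exact sequence $0\to c(K)\to K\to K/c(K)\to 0$ to get $\widehat{K/c(K)}\cong c(K)^\bot$ inside $\widehat K$. Because $K/c(K)$ is totally disconnected its dual $c(K)^\bot$ is torsion, while $\widehat{c(K)}\cong\widehat K/c(K)^\bot$ is torsion-free; a torsion subgroup with torsion-free quotient is the full torsion subgroup, so $c(K)^\bot=t(\widehat K)$. Since any continuous endomorphism preserves the connected component, $\psi(c(K))\subseteq c(K)$, and from the identity $\overline\psi\circ q=q\circ\psi$ (with $q:K\to K/c(K)$ the quotient) one gets $\widehat\psi\circ\widehat q=\widehat q\circ\widehat{\overline\psi}$; hence $\widehat\psi(c(K)^\bot)\subseteq c(K)^\bot$ and, under the identification $\widehat{K/c(K)}\cong t(\widehat K)$, one has $\widehat{\overline\psi}=\widehat\psi\restriction_{t(\widehat K)}$.

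Finally, recalling from the Introduction that the algebraic entropy only sees the torsion part, namely $\ent(\widehat\psi)=\ent(\widehat\psi\restriction_{t(\widehat K)})$, I would conclude $\ent(\widehat{\overline\psi})=\ent(\widehat\psi\restriction_{t(\widehat K)})=\ent(\widehat\psi)$, completing the chain $\aent_\tau(\psi)=h_{top}(\overline\psi)=\ent(\widehat{\overline\psi})=\ent(\widehat\psi)$. I expect the genuinely delicate point to be precisely this duality step, i.e. identifying $c(K)^\bot$ with $t(\widehat K)$ and $\widehat{\overline\psi}$ with $\widehat\psi\restriction_{t(\widehat K)}$; once these identifications are in place the rest is a direct substitution of the two cited theorems.
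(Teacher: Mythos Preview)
Your proposal is correct and follows essentially the same approach as the paper: both combine Theorem \ref{ent*=htop} with Weiss' Bridge Theorem \ref{BT-W} via the Pontryagin duality identification between $K/c(K)$ and $t(\widehat K)$, together with the fact that $\ent$ only sees the torsion part. The paper runs the duality step in the opposite direction (starting from $\phi\restriction_{t(G)}$ with $G=\widehat K$ and showing $\widehat{\phi\restriction_{t(G)}}$ is conjugate to $\overline\psi$), and simply cites ``by the Pontryagin duality'' for the identification you spell out explicitly; your more detailed justification that $c(K)^\bot=t(\widehat K)$ is a welcome addition.
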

\begin{proof}
Let $G=\widehat K$ and $\phi=\widehat\psi$.
By the definition of algebraic entropy and by Theorem \ref{ent*=htop} we have respectively
\begin{equation}\label{uno}
\ent(\phi)=\ent(\phi\restriction_{t(G)}) \ \text{and}\ \aent_{\widehat\tau}(\psi)=\aent_{\widehat\tau_q}(\overline\psi),
\end{equation}
 where $\overline\psi:(K/c(K),\widehat\tau_q)\to (K/c(K),\widehat\tau_q)$ is the continuous endomorphism induced by $\psi$ and $\widehat\tau_q$ is the quotient topology of $\widehat\tau$. By the Pontryagin duality $\widehat{t(G)}$ is topologically isomorphic to $K/c(K)$ and $\widehat{\phi\restriction_{t(G)}}$ is conjugated to $\overline\psi$ by a topological isomorphism. The topological entropy is invariant under conjugation by topological isomorphisms, so 
\begin{equation}\label{due}
h_{top}(\widehat{\phi\restriction_{t(G)}})=h_{top}(\overline\psi).
\end{equation} 
Now Theorem \ref{BT-W} and Theorem \ref{ent*=htop} give respectively
\begin{equation}\label{tre}
\ent(\phi\restriction_{t(G)})=h_{top}(\widehat{\phi\restriction_{t(G)}})\ \text{and}\ \aent_{\widehat\tau_q}(\overline\psi)=h_{top}(\overline\psi).
\end{equation}
The thesis follows from \eqref{uno}, \eqref{due} and \eqref{tre}.
\end{proof}

This Bridge Theorem shows that the topological adjoint entropy $\aent_\tau$ is the topological counterpart of the algebraic entropy $\ent$, as well as the topological entropy $h_{top}$ is the topological counterpart of the algebraic entropy $h$ in view of the Bridge Theorem stated in Remark \ref{BT}. Indeed, roughly speaking, $\aent_\tau$ reduces to totally disconnected compact abelian groups, as dually the algebraic entropy $\ent$ reduces to torsion abelian groups.

\medskip
The following is another Bridge Theorem, showing that the topological adjoint entropy of a continuous endomorphism of a precompact abelian group is the same as the algebraic entropy of the dual endomorphism in the precompact duality. Its Corollary \ref{cor} will show that this entropy coincides also with the algebraic entropy of the dual endomorphism in the Pontryagin duality of its extension to the completion.

\begin{theorem}[Bridge Theorem]\label{ent-adj-top}
Let $(G,\tau)$ be a precompact abelian group and $\phi:(G,\tau)\to(G,\tau)$ a continuous endomorphism. Then $$\aent_\tau(\phi)=\ent(\prdual\phi).$$
\end{theorem}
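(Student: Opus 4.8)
The plan is to transport the computation of $\aent_\tau(\phi)$ through the annihilator of the precompact duality, turning the cotrajectory subgroups $B_n(\phi,N)$ in $G$ into the trajectory subgroups $T_n(\prdual\phi,N^\bot)$ in $\prdual{(G,\tau)}$, and then to match the two suprema. The backbone is a local equality $H^\star(\phi,N)=H(\prdual\phi,N^\bot)$ valid for every $N\in\CC_\tau(G)$.

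First I would check that $N\mapsto N^\bot$ is a bijection from $\CC_\tau(G)$ onto the family of finite subgroups of $\prdual{(G,\tau)}$. If $N\in\CC_\tau(G)$, then $G/N$ is finite, so by property (a) of Section \ref{precompact} together with Fact \ref{prd}(a) one gets $N^\bot\cong_{top}\prdual{(G/N,\tau_q)}\cong G/N$, which is finite. Conversely, a finite subgroup $F$ of $\prdual G$ is closed, hence $F=(F^\top)^\bot$ by the order-inverting bijection between closed subgroups; since $\prdual{(G/F^\top)}\cong_{top}(F^\top)^\bot=F$ is finite and the precompact duality is reflexive, $G/F^\top$ is finite, so $F^\top$ is a closed finite-index, hence open, subgroup, i.e.\ $F^\top\in\CC_\tau(G)$. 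This identifies the two index sets of the suprema.

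Next, fix $N\in\CC_\tau(G)$ and put $F=N^\bot$. Since $\phi$ is continuous and $N$ is open of finite index, each $\phi^{-i}(N)$ is open of finite index (the map $g+\phi^{-i}(N)\mapsto\phi^i(g)+N$ embeds $G/\phi^{-i}(N)$ into $G/N$), whence $B_n(\phi,N)=\bigcap_{i=0}^{n-1}\phi^{-i}(N)\in\CC_\tau(G)$. Dualizing with Fact \ref{prd}(b) and then Fact \ref{prd}(c) I would obtain
$$B_n(\phi,N)^\bot\cong_{top}\overline{\sum_{i=0}^{n-1}(\phi^{-i}(N))^\bot}=\overline{\sum_{i=0}^{n-1}(\prdual\phi)^i(F)}=\overline{T_n(\prdual\phi,F)}.$$
Each $(\prdual\phi)^i(F)$ is a homomorphic image of the finite group $F$, so $T_n(\prdual\phi,F)$ is finite, hence closed, and the closure drops out: $B_n(\phi,N)^\bot\cong_{top}T_n(\prdual\phi,F)$.

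Finally I would pass to cardinalities. As $G/B_n(\phi,N)$ is finite, property (a) of Section \ref{precompact} gives $\prdual{(G/B_n(\phi,N))}\cong_{top}B_n(\phi,N)^\bot$, and Fact \ref{prd}(a) yields $|C_n(\phi,N)|=|G/B_n(\phi,N)|=|B_n(\phi,N)^\bot|=|T_n(\prdual\phi,F)|$ for all $n$. Dividing by $n$ and letting $n\to\infty$ gives $H^\star(\phi,N)=H(\prdual\phi,F)$; taking the supremum over $N\in\CC_\tau(G)$ and using the bijection of the first step then yields $\aent_\tau(\phi)=\ent(\prdual\phi)$. The routine points are the openness and finite-index bookkeeping for $\phi^{-i}(N)$ and $B_n(\phi,N)$; the step needing most care is the first one, namely that a finite subgroup of $\prdual G$ annihilates to an \emph{open} finite-index subgroup of $G$, and the observation that finiteness of $T_n(\prdual\phi,F)$ makes the closure in Fact \ref{prd}(b) vanish, so that the cardinality identity is exact rather than merely asymptotic.
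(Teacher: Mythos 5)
Your proof is correct and takes essentially the same route as the paper: the local identity $H^\star(\phi,N)=H(\prdual\phi,N^\bot)$ obtained from Fact \ref{prd}(a)--(c), with the closure in Fact \ref{prd}(b) collapsing because the trajectory $T_n(\prdual\phi,N^\bot)$ is finite, and then passing to cardinalities via $|C_n(\phi,N)|=|B_n(\phi,N)^\bot|$. The only difference is that you make explicit the bijection $N\mapsto N^\bot$ between $\CC_\tau(G)$ and the finite subgroups of $\prdual{(G,\tau)}$ (needed to match the two suprema, i.e.\ for the inequality $\aent_\tau(\phi)\geq\ent(\prdual\phi)$), which the paper leaves implicit; this is a useful clarification rather than a different approach.
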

\begin{proof} 
Let $N\in\mathcal C_\tau(G)$. Then $F=N^\bot$ is a finite subgroup of $\prdual{(G,\tau)}$ by Fact \ref{prd}(a). By Fact \ref{prd}(c) $(\phi^{-n}(N))^\bot=(\prdual\phi)^n (F)$ for every $n\in\N$. Hence, $B_n(\phi,N)^\bot=T_n(\prdual \phi,F)$ for every $n\in\N_+$ by Fact \ref{prd}(b). It follows that $$|C_n(\phi,N)|=|\prdual{C_n(\phi,N)}|=|B_n(\phi,N)^\bot|=|T_n(\prdual\phi,F)|$$ for every $n\in\N_+$, and this concludes the proof.
\end{proof}

\begin{corollary}\label{cor}
Let $(G,\tau)$ be a precompact abelian group. Denote by $\widetilde\tau$ the topology of the completion $K=\widetilde{(G,\tau)}$, let $\widetilde\phi:K\to K$ be the extension of $\phi$, and let $\overline{\widetilde\phi}:K/c(K)\to K/c(K)$ be the endomorphism induced by $\widetilde\phi$. Then 
$$\ent(\prdual\phi)=\aent_\tau(\phi)=\aent_{\widetilde\tau}(\widetilde\phi)=\ent(\widehat{\widetilde\phi})=h_{top}(\overline{\widetilde\phi}).$$
\end{corollary}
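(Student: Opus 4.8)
The plan is to read off every equality in the displayed chain from a result already established, since this corollary is a synthesis of the Bridge Theorems and the reductions proved above rather than a genuinely new argument. First I would record the two structural facts that make the cited results applicable: because $(G,\tau)$ is precompact it is in particular Hausdorff, and its completion $K=\widetilde{(G,\tau)}$ is compact. These two observations are exactly the hypotheses needed to invoke the earlier statements.

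For the leftmost equality $\ent(\prdual\phi)=\aent_\tau(\phi)$, I would simply cite Theorem \ref{ent-adj-top}, which is precisely the Bridge Theorem for the precompact duality applied to $(G,\tau)$ and $\phi$. For the second equality $\aent_\tau(\phi)=\aent_{\widetilde\tau}(\widetilde\phi)$, I would apply Corollary \ref{aenttilde}, valid here since $(G,\tau)$ is Hausdorff; this identifies the topological adjoint entropy of $\phi$ with that of its extension $\widetilde\phi$ to the compact completion $K$.

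The remaining equalities come from feeding the compact group $K$ and the endomorphism $\widetilde\phi$ into the compact-case results. Corollary \ref{BT!} gives $\aent_{\widetilde\tau}(\widetilde\phi)=\ent(\widehat{\widetilde\phi})$, while Theorem \ref{ent*=htop} gives $\aent_{\widetilde\tau}(\widetilde\phi)=h_{top}(\overline{\widetilde\phi})$. Since both of these quantities equal the common value $\aent_{\widetilde\tau}(\widetilde\phi)$, combining them also yields $\ent(\widehat{\widetilde\phi})=h_{top}(\overline{\widetilde\phi})$, and the entire chain is closed.

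There is no hard step here: the only thing to verify is that the hypotheses of each cited result are met, namely Hausdorffness of $(G,\tau)$ for Corollary \ref{aenttilde} and compactness of $K$ for Corollary \ref{BT!} and Theorem \ref{ent*=htop}, and both follow immediately from precompactness of $(G,\tau)$. The content of the corollary is therefore organizational, displaying all five quantities as simultaneously equal, rather than requiring any new computation.
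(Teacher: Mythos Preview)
Your proposal is correct and follows essentially the same approach as the paper: each equality is read off from one of the earlier results (Theorem \ref{ent-adj-top}, Corollary \ref{aenttilde}, Theorem \ref{ent*=htop}), with the only minor difference that you invoke Corollary \ref{BT!} for $\aent_{\widetilde\tau}(\widetilde\phi)=\ent(\widehat{\widetilde\phi})$ whereas the paper cites Weiss' Bridge Theorem \ref{BT-W} directly. Since Corollary \ref{BT!} is itself derived from Theorem \ref{BT-W} together with Theorem \ref{ent*=htop}, your citation is arguably the cleaner one, but the substance of the argument is the same.
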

\begin{proof}
The first equality is Theorem \ref{ent-adj-top}, the second is Corollary \ref{aenttilde}, while the equality $\aent_{\widetilde\tau}(\widetilde\phi)=h_{top}(\overline{\widetilde\phi})$ follows from Theorem \ref{ent*=htop} and $\ent(\widehat{\widetilde\phi})=h_{top}(\widetilde\phi)$ from Theorem \ref{BT-W}.
\end{proof}

In the hypotheses of Corollary \ref{cor}, assume that $(G,\tau)$ is linear. Then $\widetilde{(G,\tau)}$ is linear as well and we obtain
$$\ent(\prdual\phi)=\aent_\tau(\phi)=\aent_{\widetilde\tau}(\widetilde\phi)=\ent(\widehat{\widetilde\phi})=h_{top}(\widetilde\phi).$$
If one assumes only that $(G,\tau)$ is a totally disconnected compact abelian group, then $\widetilde{(G,\tau)}$ is not totally disconnected in general (it can be connected), and the equality $\aent_{\widetilde\tau}(\widetilde\phi)=h_{top}(\widetilde\phi)$ may fail as the following example shows.

\begin{example}\label{exx}
Let $G=\Z$ and denote by $\widetilde\tau$ the compact topology of $\T$. Let $\alpha\in\T$ be an element of infinite order, consider the inclusion $\Z\to \T$ given by $1\mapsto \alpha$ and endow $\Z$ with the topology $\tau_\alpha$ inherited from $\T$ by this inclusion. Then $(G,\tau_\alpha)$ is dense in $\T$. Moreover, consider $\widetilde\phi=\mu_2:\T\to \T$, defined by $\widetilde\phi(x)=2x$ for every $x\in \T$, and $\phi=\mu_2\restriction_G$.

Now $\aent_{\tau_\alpha}(\phi)=0$, by Proposition \ref{ent-adj-top}(b) we have $\aent_{\tau}(\widetilde\phi)=\aent_{\tau_\alpha}(\phi)$, and so $\aent_{\widetilde\tau}(\widetilde\phi)=0$. 

On the other hand, the Kolmogorov-Sinai Theorem for the topological entropy of automorphisms of $\T^n$ (see \cite{K,S}) gives $h_{top}(\widetilde\phi)=\log 2$.
\end{example}

The situation described by Example \ref{exx} can be generalized as follows. Suppose that $(G,\tau)$ is a precompact abelian group such that $G$ is $\tau$-residually finite. Let $\phi:(G,\tau)\to(G,\tau)$ be a continuous endomorphism and $\widetilde\phi:\widetilde{(G,\tau)}\to \widetilde{(G,\tau)}$ the extension of $\phi$ to $\widetilde{(G,\tau)}$. Since $G$ is $\tau$-residually finite, $\tau_\lambda=\tau_\lambda^+$ is precompact by Lemma \ref{res-fin-tau}. Denote by $\phi_\lambda$ the endomorphism $\phi$ considered on $(G,\tau_\lambda)$. Then $\phi_\lambda:(G,\tau_\lambda)\to (G,\tau_\lambda)$ is continuous, and let $\widetilde\phi_\lambda:\widetilde{(G,\tau_\lambda)}\to \widetilde{(G,\tau_\lambda)}$ be the extension of $\phi$ to $\widetilde{(G,\tau_\lambda})$. Denote by $\widetilde\tau$ and $\widetilde\tau_\lambda$ respectively the topology of $\widetilde{(G,\tau)}$ and $\widetilde{(G,\tau_\lambda)}$.
The situation is described by the following diagram.
\begin{equation*}
\xymatrix{
& \widetilde{(G,\tau)} \ar[rr]^{\widetilde\phi}\ar'[d][dd] & & \widetilde{(G,\tau)} \ar[dd]^{\widetilde{id}_G=\pi}\\
(G,\tau)\ar[rr]^{\ \ \ \ \ \ \ \phi}\ar@{^{(}->}[ru] \ar[dd]_{id_G}& & (G,\tau) \ar@{^{(}->}[ru] \ar[dd]&\\
& \widetilde{(G,\tau_\lambda)}\ar'[r]^{\widetilde\phi_\lambda}[rr] & & \widetilde{(G,\tau_\lambda)}=\widetilde{(G,\tau)}/c(\widetilde{(G,\tau)})\\
(G,\tau_\lambda)\ar[rr]^{\phi_\lambda}\ar@{^{(}->}[ru] & & (G,\tau_\lambda) \ar@{^{(}->}[ru]&\\
}
\end{equation*}
The right side face of the cubic diagram (which obviously coincides with the left one) is described by \eqref{leftside}, that is, $\widetilde{(G,\tau_\lambda)}=\widetilde{(G,\tau)}/c(\widetilde{(G,\tau)})$ and the extension $\widetilde{id}_G$ of $id_G:(G,\tau)\to (G,\tau_\lambda)$ coincides with the canonical projection $\pi:\widetilde{(G,\tau)}\to \widetilde{(G,\tau)}/c(\widetilde{(G,\tau)})$.

So in the cubic diagram we have four continuous endomorphisms, namely $\phi$, $\phi_\lambda$, $\widetilde\phi$ and $\widetilde\phi_\lambda$, that have the same topological adjoint entropy; indeed, Proposition \ref{lin+} covers the front face of the cube (i.e., $\aent_\tau(\phi)=\aent_{\tau_\lambda}(\phi_\lambda)$), while Corollary \ref{aenttilde} applies to the upper and the lower faces (i.e., $\aent_\tau(\phi)=\aent_{\widetilde\tau}(\widetilde\phi)$ and $\aent_{\tau_\lambda}(\phi_\lambda)=\aent_{\widetilde\tau_\lambda}(\widetilde\phi_\lambda)$). 

On the other hand we can consider the topological entropy only for $\widetilde\phi$ and $\widetilde\phi_\lambda$, as the completions are compact. By Theorem \ref{ent*=htop}, $h_{top}(\widetilde\phi_\lambda)=\ent_{\widetilde\tau_\lambda}(\phi_\lambda)$ and so $h_{top}(\widetilde\phi_\lambda)$ coincides with the topological adjoint entropy of the four continuous endomorphisms. For the topological entropy of $\widetilde\phi$ we can say only that $h_{top}(\widetilde\phi)\geq h_{top}(\widetilde\phi_\lambda)$, in view of the monotonicity property of the topological entropy.

We collect all these observations in the following formula:
\begin{equation*}\label{cube}
h_{top}(\widetilde\phi)\geq h_{top}(\widetilde\phi_\lambda)=\aent_{\widetilde\tau_\lambda}(\widetilde\phi_\lambda)=\aent_{\tau_\lambda}(\phi_\lambda)=\aent_\tau(\phi)=\aent_{\widetilde\tau}(\widetilde\phi).
\end{equation*}
The next example shows that this inequality can be strict in the strongest way; indeed, we find a $\phi$ such that $h_{top}(\widetilde\phi)=\infty$ and $h_{top}(\widetilde\phi_\lambda)=0$.

\begin{example}
Let $G=\bigoplus_{p\in\Prm}\Z(p)$. Since $G$ is residually finite, $\gamma_G$ is Hausdorff by Lemma \ref{res-fin-tau}, and so $\mathcal P_G$ is Hausdorff as well, as $\gamma_G\leq\mathcal P_G$ (see \eqref{pnb}). Moreover, $\gamma_G=\nu_G$.

Let $\mu_2$ denote the endomorphism given by the multiplication by $2$, and set $\phi=\mu_2:G\to G$.

Consider $\phi:(G,\mathcal P_G)\to(G,\mathcal P_G)$. Then $\phi$ can be extended to the Bohr compactification $K=bG$ of $G$ ($K$ coincides with the completion of $(G,\mathcal P_G)$) and let $\widetilde\phi:K\to K$ be this extension. By the density of $G$ in $K$, and the uniqueness of the extension, $\widetilde\phi=\mu_2:K\to K$. By \cite[Theorem 3.5 - Case 1 of the proof]{ADS} with $\alpha=\mu_2$, we have $h_{top}(\widetilde\phi)=\infty$. 

Consider now $\phi_\lambda:(G,\gamma_G)\to (G,\gamma_G)$, where $(\mathcal P_G)_\lambda=\gamma_G$ by Lemma \ref{bl=p}.
The completion $K_\lambda$ of $(G,\gamma_G)$ is $K_\lambda=\prod_{p\in\Prm}\Z(p)$, as $\gamma_G=\nu_G$, and $\phi$ extends to $\widetilde \phi_\lambda:K_\lambda\to K_\lambda$; denote by $\widetilde\gamma_G$ the topology of $K_\lambda$. Moreover, $\aent_{\widetilde\gamma_G}(\widetilde\phi_\lambda)=0$, because every finite-index subgroup of $K_\lambda$ is $\widetilde\phi_\lambda$-invariant. By Theorem \ref{ent*=htop} we have $h_{top}(\widetilde\phi_\lambda)=\aent_{\widetilde\gamma_G}(\widetilde\phi_\lambda)$ and so $h_{top}(\widetilde\phi_\lambda)=0$.
\end{example}

\section{The topological adjoint entropy of the Bernoulli shifts}\label{bernoulli}

In the following example we recall the proof given in \cite{DGS} of the fact that the adjoint algebraic entropy of the Bernoulli shifts is infinite.

\begin{example}\label{beta}
If $K$ is a non-trivial abelian group, then $$\aent(\beta^\oplus_K)=\aent({}_K\beta^\oplus)=\aent(\overline\beta_K^\oplus)=\infty.$$
Indeed, by \cite[Corollary 6.5]{AGB}, $\ent(\beta_K)=\ent({}_K\beta)=\ent(\overline\beta_K)=\infty$. Moreover, $\widehat{\beta_K^\oplus}={}_K\beta$,
$\widehat{{}_K\beta^\oplus}=\beta_K$, and $\widehat{\overline\beta_K^\oplus}=(\overline\beta_K)^{-1}$ by \cite[Proposition 6.1]{DGS}.
Then Theorem \ref{BT-DGS} implies
$\ent^\star(\beta_K^\oplus)=\ent({}_K\beta)=\infty$, $\ent^\star({}_K\beta^\oplus)=\ent(\beta_K)=\infty$ and $\ent^\star(\overline\beta_K^\oplus)=\ent((\overline\beta_K)^{-1})=\infty$.
\end{example}

We give now a direct computation of the value of the algebraic adjoint entropy of the Bernoulli shifts. The starting idea for this proof was given to me by Brendan Goldsmith and Ketao Gong.

\begin{proposition}\label{beta-d}
Let $p$ be a prime and $K=\Z(p)$. Then $$\aent(\beta^\oplus_K)=\aent({}_K\beta^\oplus)=\aent(\overline\beta_K^\oplus)=\infty.$$ 
\end{proposition}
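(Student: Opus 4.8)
The plan is to prove the stronger statement that $\aent$ is infinite by exhibiting, for each $r\in\N_+$, a single finite-index subgroup $N_r$ with $H^\star(\phi,N_r)=r\log p$; since $\aent(\phi)=\sup_N H^\star(\phi,N)$, letting $r\to\infty$ forces $\aent(\phi)=\infty$. I would carry this out first for the left shift $\phi={}_K\beta^\oplus$ on $G=K^{(\N)}=\bigoplus_{i\ge0}\K e_i$, where $\phi(e_0)=0$ and $\phi(e_i)=e_{i-1}$. Every homomorphism $\chi\colon G\to\K$ is encoded by its coefficient sequence $(\chi(e_i))_{i\ge0}\in\prod_{i\ge0}\K$ (arbitrary, since elements of $G$ have finite support), and one checks directly that the functional $\chi\circ\phi^j$ has coefficient sequence obtained from that of $\chi$ by shifting $j$ places to the right and inserting zeros. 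For any finite family $\chi_1,\dots,\chi_r$ of such functionals put $N_r=\bigcap_{l=1}^r\ker\chi_l$, a subgroup of index at most $p^r$; then by definition $B_n(\phi,N_r)=\{x\in G:\chi_l(\phi^j x)=0\text{ for }0\le j<n,\ 1\le l\le r\}$ is the common kernel of the $rn$ functionals $\chi_l\circ\phi^j$, so that $\card{C_n(\phi,N_r)}=[G:B_n(\phi,N_r)]=p^{d_n}$, where $d_n$ is the $\K$-dimension of the span of $\{\chi_l\circ\phi^j:0\le j<n,\ 1\le l\le r\}$.

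The whole computation therefore reduces to a linear-algebra question: choose $\chi_1,\dots,\chi_r$ so that \emph{all} the right-shifted sequences $\chi_l\circ\phi^j$ (for $j\ge0$, $1\le l\le r$) are linearly independent over $\K$. In the language of power series $\K[[t]]$, where the coefficient sequence of $\chi$ becomes $\sum_i\chi(e_i)t^i$ and right shift is multiplication by $t$, this independence is exactly the requirement that $\chi_1,\dots,\chi_r$ be linearly independent over the polynomial ring $\K[t]$. Granting such a family, one gets $d_n=rn$ for every $n$, whence $H^\star(\phi,N_r)=\lim_{n\to\infty}\frac{rn\log p}{n}=r\log p$, and the supremum over $r$ is infinite, giving $\aent({}_K\beta^\oplus)=\infty$.

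The main obstacle is producing, for each $r$, a genuinely $\K[t]$-independent family; existence is immediate because $\K((t))$ has infinite dimension over the field $\K(t)$, so infinitely many elements of $\K[[t]]$ are $\K(t)$-linearly independent, but for a self-contained direct computation I would prefer an explicit lacunary choice, e.g. series $\chi_l=\sum_{i\ge0}t^{a_{l,i}}$ whose exponent sets grow fast enough that no nontrivial relation $\sum_l P_l(t)\chi_l=0$ with $P_l\in\K[t]$ can cause cancellation between different indices $l$; verifying this via a highest-term/gap argument is the only delicate point. Finally, I would deduce the remaining two cases by the same scheme: for the right shift $\beta_K^\oplus$ one has $\chi\circ\phi^j$ given instead by \emph{left} shifts of the coefficient sequence, and independence of the left-shifted orbits again holds for suitably non-recurrent (e.g. lacunary) sequences; for the two-sided shift $\overline\beta_K^\oplus$ one repeats the argument verbatim on $K^{(\Z)}$ with functionals whose coefficient sequences live in $\prod_{\Z}\K$, on which $\overline\beta_K^\oplus$ again acts by shifting, so the identical independence construction yields $H^\star(\overline\beta_K^\oplus,N_r)=r\log p$ and hence $\aent(\overline\beta_K^\oplus)=\infty$.
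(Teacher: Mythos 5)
Your proposal is correct, and it reaches the conclusion by a genuinely different (more conceptual) route than the paper, although the witnessing subgroups are ultimately the same objects. The paper argues ``primally'': for each $m$ it defines the explicit subgroup $N_m=\hull{e_i-\delta_1(i)e_1-\cdots-\delta_m(i)e_m : i\geq m+1}$, where the $\delta_l$ are indicator functions of the factorial-type sets $\{(mn+l-1)!+n : n\in\N_+\}$, exhibits the elements $e_{(mn)!},\ldots,e_{(mn+m-1)!}$ lying in $B_n(\beta_K^\oplus,N_m)$ but independent modulo $B_{n+1}(\beta_K^\oplus,N_m)$, and invokes the stationarity property \eqref{C=C_n->ent*=0} to conclude $H^\star(\beta_K^\oplus,N_m)\geq m\log p$. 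Your dual formulation --- writing $N_r=\bigcap_{l}\ker\chi_l$, observing $\card{C_n(\phi,N_r)}=p^{d_n}$ with $d_n$ the dimension of the span of the functionals $\chi_l\circ\phi^j$, and translating ``all shifts independent'' into $\K[t]$-linear independence in $\K[[t]]$ --- is exactly the kernel description of the paper's $N_m$ (its generators say precisely that $N_m$ is the common kernel of $m$ functionals whose coefficient sequences are the $\delta_l$ away from the first $m$ coordinates). What your version buys is twofold: the exact value $H^\star(\phi,N_r)=r\log p$ instead of a lower bound, and, for the left shift ${}_K\beta^\oplus$ (where the dual action is multiplication by $t$), a soft existence argument needing no construction at all, since $\K((t))$ is uncountable while $\K(t)$ is countable, so $\dim_{\K(t)}\K((t))$ is infinite and $\K(t)$-independent elements can be rescaled into $\K[[t]]$. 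What it costs is visible in the other two cases: for $\beta_K^\oplus$ and $\overline\beta_K^\oplus$ the dual action is truncation/left shift rather than multiplication by $t$, the module-theoretic shortcut is not available as stated, and the ``delicate point'' you defer --- independence of all left-shifted orbits of suitably lacunary sequences --- is precisely the content of the paper's factorial gap computation. That verification is routine (with supports whose consecutive gaps eventually exceed any fixed shift range, a maximal-position argument kills any finite relation $\sum_{l,j}c_{l,j}\,\chi_l\circ\phi^j=0$), but it must be written out for cases (ii) and (iii); modulo that, your proof is complete.
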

\begin{proof}
Since $K^{(\N)}\cong K^{(\N_+)}$, we consider without loss of generality $K^{(\N_+)}$ instead of $K^{(\N)}$, since it is convenient for our proof.
Write $K^{(\N)}=\bigoplus_{n\in\N_+}\hull{e_n}$, where $\{e_n:n\in\N_+\}$ is the canonical base of $K^{(\N)}$.

\smallskip
(i) We start proving that $\aent(\beta^\oplus_K)=\infty$.
Define, for $i\in\N$,
\begin{align*}
\delta_1(i)&=\begin{cases}1 & \text{if}\ i=(2n)!+n\ \text{for some}\ n\in\N_+, \\ 0 & \text{otherwise}.\end{cases}\\
\delta_2(i)&=\begin{cases}1 & \text{if}\ i=(2n+1)!+n\ \text{for some}\ n\in\N_+, \\ 0 & \text{otherwise}.\end{cases}
\end{align*}
Let $N_2=\hull{e_i-\delta_1(i)e_1-\delta_2(i)e_2:i\geq 3}\in\CC(K^{(\N)})$. Then $e_1,e_2\not\in N_2$ and $K^{(\N)}=N_2\oplus\hull{e_1}\oplus\hull{e_2}$. For every $n\in\N_+$, 
\begin{equation}\label{b1}
\hull{e_{(2n)!},e_{(2n+1)!}}\subseteq B_{n}(\beta_K^\oplus,N_2);
\end{equation}
indeed, $(\beta_K^\oplus)^k(e_{(2n)!})=e_{(2n)!+k}\in N_2$ and $(\beta_K^\oplus)^k(e_{(2n+1)!})=e_{(2n+1)!+k}\in N_2$ for every $k\in\N$ with $k<n$.
Moreover, 
\begin{equation}\label{b2}
\hull{e_{(2n)!},e_{(2n+1)!}}\cap B_{n+1}(\beta_K^\oplus,N_2)=0.
\end{equation}
In fact, since $\hull{e_{(2n)!},e_{(2n+1)!}}\subseteq B_{n}(\beta_K^\oplus,N_2)$, we have $a_1e_{(2n!)}+a_2e_{(2n+1)!}\in B_{n+1}(\beta_K^\oplus,N_2)$ for $a_1,a_2\in\Z(p)$ if and only if $(\beta_K^\oplus)^n(a_1e_{(2n!)}+a_2e_{(2n+1)!})=a_1e_{(2n!+n)}+a_2e_{(2n+1)!+n}\in N_2$; since $e_{(2n!)+n}-e_1\in N_2$ and $e_{(2n+1)!+n}-e_2\in N_2$, this is equivalent to $a_1e_1+a_2e_2\in N_2$, which occurs if and only if $a_1=a_2=0$.

By \eqref{b1} and \eqref{b2}, for every $n\in\N_+$, $$\left|\frac{B_n(\beta_K^\oplus,N_2)}{B_{n+1}(\beta_K^\oplus,N_2)}\right|\geq p^2.$$ So \eqref{C=C_n->ent*=0} gives $H(\beta_K^\oplus,N_2)\geq 2\log p$.

Generalizing this argument, for $m\in\N_+$, $m>1$, define, for $i\in\N$,
\begin{align*}
\delta_1(i)&=\begin{cases}1 & \text{if}\ i=(mn)!+n\ \text{for some}\ n\in\N_+, \\ 0 & \text{otherwise}.\end{cases}\\
\delta_2(i)&=\begin{cases}1 & \text{if}\ i=(mn+1)!+n\ \text{for some}\ n\in\N_+, \\ 0 & \text{otherwise}.\end{cases}\\
&\vdots\\
\delta_m(i)&=\begin{cases}1 & \text{if}\ i=(mn+m-1)!+n\ \text{for some}\ n\in\N_+, \\ 0 & \text{otherwise}.\end{cases}
\end{align*}
Let $N_m=\hull{e_i-\delta_1(i)e_1-\ldots-\delta_m(i)e_m:i\geq m+1}\in\CC(K^{(\N)})$. Then $e_1,\ldots,e_m\not\in N_m$ and $K^{(\N)}=N_m\oplus\hull{e_1}\oplus\ldots\oplus\hull{e_m}$. 
For every $n\in\N_+$, 
\begin{equation}\label{bm1}
\hull{e_{(mn)!},\ldots,e_{(mn+m-1)!}}\subseteq B_{n}(\beta_K^\oplus,N_m);
\end{equation}
indeed, $(\beta_K^\oplus)^k(e_{(mn)!})=e_{(mn)!+k}\in N_m,\ldots,(\beta_K^\oplus)^k(e_{(mn+m-1)!})=e_{(mn+1)!+m-1}\in N_m$ for every $k\in\N$ with $k<n$.
 
Moreover,
\begin{equation}\label{bm2}
\hull{e_{(mn)!},e_{(mn+1)!}}\cap B_{n+1}(\beta_K^\oplus,N_m)=0.
\end{equation}
In fact, since $\hull{e_{(mn)!},\ldots,e_{(mn+m-1)!}}\subseteq B_{n}(\beta_K^\oplus,N_m)$, we have $a_1e_{(mn!)}+\ldots+a_me_{(mn+m-1)!}\in B_{n+1}(\beta_K^\oplus,N_m)$ for $a_1,\ldots,a_m\in\Z(p)$ if and only if $(\beta_K^\oplus)^n(a_1e_{(mn!)}+\ldots+a_me_{(mn+m-1)!})=a_1e_{(mn!+n)}+\ldots+a_me_{(mn+m-1)!+n}\in N_m$; since $e_{(mn!)+n}-e_1\in N_m,\ldots,e_{(mn+m-1)!+n}-e_{m}\in N_m$, this is equivalent to $a_1e_1+\ldots+a_me_m\in N_m$, which occurs if and only if $a_1=\ldots=a_m=0$.

By \eqref{bm1} and \eqref{bm2}, for every $n\in\N_+$, $$\left|\frac{B_n(\beta_K^\oplus,N_m)}{B_{n+1}(\beta_K^\oplus,N_m)}\right|\geq p^m.$$  So \eqref{C=C_n->ent*=0} yields $H(\beta_K^\oplus,N_m)\geq m\log p$.

We have seen that $\aent(\beta_K^\oplus)\geq m\log p$ for every $m\in\N_+$ and so $\aent(\beta_K^\oplus)=\infty$.

\smallskip
(ii) The same argument as in (i) shows that $\aent(\overline\beta_K^\oplus)=\infty$.

\smallskip
(iii) An analogous argument shows that $\aent({}_K\beta^\oplus)=\infty$.
Indeed, it suffices to define, for every $m\in\N_+$, $m>1$,
\begin{align*}
\delta_1'(i)&=\begin{cases}1 & \text{if}\ i=(mn)!-n\ \text{for some}\ n\in\N_+, \\ 0 & \text{otherwise}.\end{cases}\\
\delta_2'(i)&=\begin{cases}1 & \text{if}\ i=(mn+1)!-n\ \text{for some}\ n\in\N_+, \\ 0 & \text{otherwise}.\end{cases}\\
&\vdots\\
\delta_m'(i)&=\begin{cases}1 & \text{if}\ i=(mn+m-1)!-n\ \text{for some}\ n\in\N_+, \\ 0 & \text{otherwise},\end{cases}
\end{align*}
$N_m=\hull{e_i-\delta_1'(i)e_1-\ldots-\delta_m'(i)e_m:i\geq m+1}\in\CC(K^{(\N)})$, and proceed as in (i).
\end{proof}

\begin{example}
Let $K$ be a non-trivial abelian group.

\smallskip
Example \ref{beta} (and so Proposition \ref{beta-d}) is equivalent to
$\aent_{\gamma_{K^{(\N)}}}(\beta_K^\oplus)=\aent_{\gamma_{K^{(\N)}}}({}_K\beta^\oplus)=\aent_{\gamma_{K^{(\Z)}}}(\overline\beta_K)=\infty$.

\smallskip
On the other hand, we can consider the topology $\tau_K$ on $K^{(\N)}$ (respectively, $K^{(\Z)}$) induced by the product topology of $K^{\N}$ (respectively, $K^{\Z}$). Clearly, $\tau_K\leq \gamma_{K^{(\N)}}$ (respectively, $\tau_K\leq \gamma_{K^{(\Z)}}$). Moreover, 
\begin{center}
$\CC_{\tau_K}(K^{(\N)})=\{0^F\oplus K^{(\N\setminus F)}:F\subseteq\N, F\text{ finite}\}$
\end{center}
\begin{center}
(respectively, $\CC_{\tau_K}(K^{(\Z)})=\{0^F\oplus K^{(\Z\setminus F)}:F\subseteq\Z, F\text{ finite}\}$).
\end{center}

Then we see that 
\begin{itemize}
\item[(i)]$\aent_{\tau_K}(\beta_K^\oplus)=0$ and 
\item[(ii)]$\aent_{\tau_K}({}_K\beta^\oplus)=\aent_{\tau_K}(\overline\beta_K^\oplus)=\log|K|$.
\end{itemize}
Indeed, every $N\in\CC_{\tau_K}(K^{(\N)})$ contains $$N_m=\underbrace{0\oplus\ldots\oplus0}_m\oplus K^{\N\setminus\{0,\ldots,m-1\}}$$ for some $m\in\N_+$. By Lemma \ref{N<M->HN>HM} it suffices to calculate the topological adjoint entropy with respect to the $N_m$.

\smallskip
(i) Let $m\in\N_+$. For every $n\in\N_+$, $B_n(\beta_K^\oplus,N_m)=N_m$ and so $H(\beta_K^\oplus,N_m)=0$. Therefore, $\aent_{\tau_K}(\beta_K^\oplus)=0$. 

\smallskip
(ii) Let $m\in\N_+$. For every $n\in\N_+$, $B_n({}_K\beta^\oplus,N_m)=N_{m+n-1}$. So $|C_n({}_K\beta^\oplus,N_m)|=|K|^{m+n-1}$ and hence $H^\star({}_K\beta^\oplus,N_m)=\log|K|$. Consequently, $\aent_{\tau_K}({}_K\beta^\oplus)=\log|K|$. The proof that $\aent_{\tau_K}(\overline\beta_K^\oplus)=\log|K|$ is analogous.
\end{example}

\end{document}